\documentclass[12pt]{article}
\usepackage[margin=.88in]{geometry}

\usepackage{multirow}
\usepackage{amsmath, amssymb,amsthm}
\usepackage{bm}
\usepackage[pdftex]{graphicx}
\usepackage{comment}
\usepackage{caption}
\usepackage{subcaption}
\usepackage{enumitem}
\usepackage{natbib}
\usepackage{diagbox}
\usepackage{lineno}
\usepackage[normalem]{ulem}

\usepackage{hyperref}
\hypersetup{
	colorlinks=true,
	linkcolor=blue,
	filecolor=blue,
	urlcolor=red,
	citecolor=blue,
}

\DeclareMathAlphabet\mathbfcal{OMS}{cmsy}{b}{n}

\newcommand{\be}{\begin{equation}}
\newcommand{\ee}{\end{equation}}
\newcommand{\bs}{\begin{split}}
\newcommand{\es}{\end{split}}
\newcommand{\bea}{\begin{eqnarray}}
\newcommand{\eea}{\end{eqnarray}}
\newcommand{\beas}{\begin{eqnarray*}}
\newcommand{\eeas}{\end{eqnarray*}}

\usepackage{color}

\newcommand{\E}{\mathbb{E}}

\newcommand{\R}{\mathbb{R}}

\newcommand{\bcM}{{\mathbfcal{M}}}

\newcommand{\bcS}{{\mathbfcal{S}}}

\newcommand{\bcT}{{\mathbfcal{T}}}

\newcommand{\bcK}{{\mathbfcal{K}}}

\newcommand{\polylog}{\operatorname{polylog}}

\newtheorem{proposition}{Proposition}
\newtheorem{theorem}{Theorem}
\newtheorem{conjecture}{Conjecture}
\newtheorem{lemma}{Lemma}
\newtheorem{corollary}{Corollary}
\newtheorem{definition}{Definition}
\newtheorem{remark}{Remark}
\newtheorem{assumption}{Assumption}

\usepackage{setspace}
\begin{document}

\title{From Reverse Detection--Estimation Gaps to Computational Lower Bounds for Norm Approximation}

\author{Runshi Tang\footnote{Department of Statistics, Columbia University},\quad
Yuefeng Han\footnote{Department of Applied and Computational Mathematics and Statistics, University of Notre Dame},\quad and\quad
Anru R. Zhang\footnote{Department of Biostatistics \& Bioinformatics and Department of Computer Science, Duke University}}

\date{}
\maketitle

\begin{abstract}
We develop a general reduction scheme that converts reverse detection--estimation gaps into computational lower bounds for norm approximation. If an efficiently computable estimator has error well below a conjectured computational detection threshold, then a sufficiently accurate norm approximator applied to this estimator would yield an efficient test, transferring average-case detection hardness to worst-case approximation hardness. The ratio between the detection and estimation scales becomes a lower bound on the achievable approximation factor.

We apply the reduction in three settings. Using a previously established reverse gap for high-order cumulant tensors, we obtain a conditional lower bound for polynomial-time approximation of the order-$d$ tensor spectral norm. We then establish new reverse gaps for the $k$-sparse matrix spectral norm and a normalized $k$-sparse matrix cut norm under planted-clique hardness. The resulting lower bounds are of order $\sqrt{k}$ up to a logarithmic factor, matching the simple deterministic $\sqrt{k}$ certificates, whereas previous hardness results for the sparse principal component objective rule out only constant factors. Finally, under a model-specific low-degree conjecture, a new reverse gap forces polynomially growing approximation factors for the order-$d$ tensor cut norm when $d\ge3$. These results identify reverse detection--estimation gaps as a systematic source of computational lower bounds for norm approximation.
\end{abstract}

\section{Introduction}\label{sec:intro}

%

{\it Norm approximation} is computationally straightforward for many classical matrix norms: exact evaluation is possible in polynomial time. Under sparsity constraints or higher-order tensor structure, however, even dimension-dependent approximation can become computationally difficult.
Let $\|\cdot\|$ be a norm on a finite-dimensional space $\mathcal F_p$, where $p$ indexes the problem size,
and let $f_p:\mathcal F_p\to\mathbb R_+$ satisfy
\[
\rho_{f,p}^{-1}\|T\|\le f_p(T)\le \zeta_{f,p}\|T\|,
\qquad T\in\mathcal F_p,
\]
for approximation factors $\rho_{f,p},\zeta_{f,p}\ge 1$.
We call $\gamma_{f,p}:=\rho_{f,p}\zeta_{f,p}$ the distortion of $f_p$.
The basic question studied in this paper is:
how accurately can such norms be approximated, or more specifically, how small can $\gamma_{f,p}$ be, in polynomial time as $p$ grows?

A seemingly different but closely related phenomenon is the {\it reverse detection--estimation gap}, in which the classical ordering of the two tasks, detection being no harder than estimation, is reversed.
Consider a testing problem
\[
H_0:\mu_p=0
\qquad\text{versus}\qquad
H_1:\|\mu_p\|\ge s_p,
\]
and suppose that there exists a polynomial-time computable estimator $\widehat{\mu}_p$ such that
$\|\widehat{\mu}_p-\mu_p\|\le r_p$ with high probability under both hypotheses.
If $s_p/r_p\to\infty$, so that estimation is possible at a scale much smaller than the signal separation, while detection at scale $s_p$ remains computationally hard, we say that the problem exhibits a \emph{reverse detection--estimation gap}, with gap size $s_p/r_p$.
Such a gap suggests a direct route from statistical detection hardness to computational hardness of approximating the target norm.
Indeed, if $\|\widehat{\mu}_p\|$ could be evaluated efficiently, then thresholding it between the null scale $r_p$ and the alternative scale $s_p-r_p$ would yield an efficient test.
More generally, a sufficiently accurate approximation to $\|\widehat{\mu}_p\|$ can yield the same contradiction, so the ratio $s_p/r_p$ naturally determines an approximation-hardness scale.

\subsection{Main contributions}

In this paper, we make two main contributions.
First, we develop a general reduction that converts a reverse detection--estimation gap into a computational lower bound for norm approximation.
Roughly, a reverse gap of size $s_p/r_p$ rules out polynomial-time norm approximation with distortion $o(s_p/r_p)$ under the corresponding detection-hardness assumption.
Second, we instantiate the reduction in three settings, two of which require establishing new reverse detection--estimation gaps. Using a previously established reverse gap for high-order cumulant tensors, we obtain a conditional lower bound, polynomial in the dimension $p$, for polynomial-time approximation of the order-$d$ tensor spectral norm. We then establish new reverse gaps that yield conditional lower bounds of order $\sqrt{k/\log(ep/k)}$ for the $k$-sparse matrix spectral norm and a normalized $k$-sparse matrix cut norm, and polynomially growing lower bounds for the order-$d$ tensor cut norm with $d\ge3$.
The resulting approximation lower bounds are conditional: the sparse-matrix results rely on planted-clique detection hardness, whereas the tensor results rely on model-specific low-degree conjectures.
We describe these results in more detail below.

\paragraph{A general reduction from reverse gaps to approximation hardness.}
We first formalize the preceding connection for a general normed parameter space. Suppose that strong detection is computationally hard at signal scale $s_p$, while a polynomial-time estimator achieves error $r_p$ in the target norm under both hypotheses. We show that a reverse gap of size $s_p/r_p$ rules out norm approximation with distortion $o(s_p/r_p)$. More precisely, under the uniformity, factor-computability, and threshold-closure conditions specified in Section~\ref{sec:framework}, every admissible polynomial-time norm approximator satisfies
\[
\limsup_{p\to\infty}\frac{\gamma_{f,p}r_p}{s_p}\ge \frac14.
\]
Thus, whenever $s_p/r_p\to\infty$, a reverse detection--estimation gap rules out approximation distortion $o(s_p/r_p)$; the constant $1/4$ is inessential to this scaling consequence.
The main role of the general theorem is to isolate the precise conditions under which this implication is algorithmically valid. These conditions hold for standard certificate-producing algorithms in the literature, whose approximation factors, such as $\sqrt{k}$ or $p^{(d-2)/2}$, are explicit and efficiently computable.
Conceptually, the reduction converts average-case detection hardness in a statistical model into a worst-case lower bound for norm approximation.

\paragraph{Tensor spectral norm: an established reverse gap.}
Our first application uses a reverse detection--estimation gap for tensor spectral norm that was previously established in the high-order cumulant model \citep{tang2026detectionharderestimationcertain}.
Let $p$ denote the ambient dimension and let $d\ge3$ denote the tensor order.
In that model, the low-degree likelihood-ratio norm remains bounded at a signal scale for which the empirical cumulant tensor can already be estimated with substantially smaller error in the tensor spectral norm.
Equivalently, the established statistical result provides a reverse gap of polynomial order in $p$, up to logarithmic factors, which our reduction transfers directly to approximation hardness.
Applying the general reduction gives, under the corresponding model-specific low-degree conjecture,
\[
\limsup_{p\to\infty}
\frac{\gamma_{f,p}
(\log p)^{9d/2}}{p^{d/4-1/2}}
>
0
\]
for every uniform deterministic polynomial-time approximator of the symmetric tensor spectral norm whose approximation factors are efficiently computable.
Thus, a reverse gap originally arising from a statistical testing problem directly yields a worst-case computational lower bound for tensor norm approximation.

\paragraph{Sparse matrix norms: new reverse gaps.}
We next establish new reverse detection--estimation gaps for the $k$-sparse spectral norm and a normalized $k$-sparse cut norm on $p$-by-$p$ matrices, defined in \eqref{eq_sparse_spectral_norm} and \eqref{eq_sparse_cut_norm}, respectively. 
In the planted-clique model, the centered adjacency matrix is an efficiently computable estimator of the planted mean matrix.
In both the $k$-sparse spectral norm and the normalized $k$-sparse matrix cut norm, the planted signal has size of order $k$, while the stochastic estimation radius is $\sqrt{k\log(ep/k)}$.
This gives the reverse-gap ratio $\sqrt{k/\log(ep/k)}$.
Under randomized planted-clique detection hardness, we therefore rule out distortion
$o\left(
\sqrt{k/\log(ep/k)}
\right)$ for randomized global approximators satisfying the uniformity, factor-computability, and threshold-closure conditions of the theorems.
For the sparse spectral norm, this has a direct connection to sparse principal component analysis: on positive semidefinite inputs, $\|M\|_{\mathrm{sp},k}$ (defined in Section~\ref{sec:sparse}) is exactly the sparse principal component objective. 
Whereas previously available hardness results for this objective rule out only fixed-factor approximations, our result yields a dimension-dependent lower bound.
Combined with the deterministic $\sqrt{k}$ certificates in Remark~\ref{remark:sparse_upper_gap},
the lower bounds for both sparse matrix norms are within a factor of order $\sqrt{\log(ep/k)}$ of these simple deterministic certificates.

\paragraph{Tensor cut norm: a new reverse gap.}
We further construct a reverse detection--estimation gap for the normalized cut norm for tensors in $\R^{p^d}$.
Under an asymmetric spiked Gaussian tensor model \eqref{eq_tensor_cut_model}, we obtain a reverse gap of order
\[
\frac{p^{d/4-1/2}}{(\log p)^B},
\qquad
B>\frac{d-2}{2}.
\]
Under the corresponding model-specific low-degree conjecture, this implies
\[
\limsup_{p\to\infty}
\gamma_{f,p}\frac{(\log p)^B}{p^{d/4-1/2}}
>0
\]
for every uniform polynomial-time approximator of the order-$d$ tensor cut norm whose approximation factors are efficiently computable.

\vspace{\baselineskip}

Taken together, these results show that reverse detection--estimation gaps provide a systematic route to norm-approximation hardness. Rather than constructing a separate direct reduction for each approximation problem, one can identify a statistical model in which estimation in the target norm is efficient while detection remains computationally hard, and then transfer the resulting separation of scales into a worst-case approximation lower bound.

\subsection{Related literature}

\paragraph{Statistical--computational gaps and average-case reductions.}
A large literature studies gaps between statistically attainable performance and what can be achieved by computationally efficient procedures. General perspectives based on convex relaxation, average-case complexity, and reductions from planted problems have been developed in a variety of settings \citep{chandrasekaranJordan2013,berthetRigolletComplexity2013,brennanBreslerHuleihel2018,brennanBreslerSecretLeakage2020}. Concrete examples include sparse principal component analysis, submatrix detection and localization, sparse canonical correlation analysis, sparse matrix detection, graphon estimation, and other structured high-dimensional problems \citep{maWuSubmatrix2015,wangBerthetSamworthSparsePCA2016,gaoMaZhouSparseCCA2017,caiLiangRakhlinSubmatrix2017,caiWuSparseMatrix2020,luoGaoGraphon2024,tanKlusowskiBalasubramanian2026}. More refined reduction frameworks have sought to transfer hardness between statistically distinct planted problems, including reductions tailored to sparse PCA and reductions based on variants of planted clique and hypergraphic planted clique \citep{brennanBreslerSparsePCA2019,brennanBreslerHuleihel2018,brennanBreslerSecretLeakage2020,luoZhangHypergraphicClique2020}. Related work has also studied the precise error attainable by computationally efficient tests, rather than only the location of a computational threshold \citep{moitraWeinTesting2025}. The reverse detection--estimation phenomenon used here was established for high-order moment and cumulant tensors in \cite{tang2026detectionharderestimationcertain}.

\paragraph{Low-degree polynomials, statistical queries, and sum-of-squares.}
Low-degree polynomials and the sum-of-squares hierarchy have become important tools for predicting and proving computational barriers in planted statistical problems \citep{hopkins2018statistical,kunisky2019notes,hopkins2017power,schramm2022computational}. Their behavior has been analyzed for planted clique, sparse PCA, tensor problems, constrained principal component problems, and related certification tasks \citep{mekaPotechinWigderson2015,maWigdersonSparsePCA2015,bandeiraKuniskyWein2020,potechinRajendranSparsePCA2022}. Connections between low-degree tests and other restricted computational models, notably statistical-query algorithms, provide additional justification for using low-degree calculations as algorithmic evidence \citep{brennanBreslerHopkinsLiSchramm2021}. At the same time, the scope of the low-degree heuristic requires care. Counterexamples to broad formulations have been constructed \citep{holmgrenLowDegree2020}, and recent work gives counterexamples to stronger quasi-polynomial-time and polynomial-time formulations \citep{buhaiHsiehJainKothari2025,maoLowDegreeFalse2026}. In a complementary direction, recent results establish rigorous algorithmic consequences of low-degree bounds under additional conditions \citep{hsiehKaneKothariLiMohantyTiegel2026}. This evolving picture is one reason that the tensor results in this paper formulate the low-degree-to-polynomial-time step explicitly as a model-specific conjecture, while keeping the low-degree likelihood-ratio calculations themselves separate from that conjectural step.

\paragraph{Tensor norms and multilinear optimization.}
Tensor spectral and nuclear norms are fundamental objects in multilinear algebra, rank-one approximation, polynomial optimization, and tensor decomposition \citep{limSingularValuesEigenvalues2005,koldaTensorDecompositionsApplications2009,deLathauwerDeMoorVandewalle2000}. In contrast with matrix operator and nuclear norms, many basic tensor-norm problems become computationally intractable for tensors of order three and higher: exact computation and several approximation tasks are NP-hard, with connections to multilinear optimization and quantum-information problems \citep{hillarMostTensorProblems2013a,harrowTestingProductStates2013,friedlandNuclearNormHigherOrder2018}. A substantial algorithmic literature develops relaxations and approximation procedures based on semidefinite programming, tensor flattenings and partitions, sphere coverings, and sum-of-squares methods \citep{nieWangRankOneTensor2014,jiangMaZhangTensorPCA2015,brandaoEstimatingOperatorNorms2015,bhattiproluWeakDecoupling2017,bhattiproluSumofSquaresCertificates2017,liBoundsSpectralNuclear2016,chenLiTensorSpectralPNorm2020,heApproximatingTensorNorms2023}. The geometry and computation of tensor spectral and nuclear norms, including their duality and their relations to matrix flattenings, have also been studied directly \citep{friedlandComputationalComplexityDuality2016,huTensorFlattenings2015,friedlandWangTensorSpectral2020}. More recent work gives sharper algorithms in special dimensional regimes and develops further approximation results for tensor nuclear and spectral $p$-norms \citep{huJiangLiOrderThree2025,guanHeJiangLiNuclearPNorm2025}.

\paragraph{Tensor PCA and statistical tensor models.}
There is a parallel literature on statistical and computational thresholds for random tensor models. Spiked tensor PCA provides a canonical example in which information-theoretic, spectral, and computational thresholds can differ substantially \citep{montanariRichardTensorPCA2014,montanariReichmanZeitouni2015,hopkinsTensorPCA2015}. Sum-of-squares, statistical-query, communication-complexity, and related hierarchies have been used to characterize or predict the power of broad classes of algorithms in tensor PCA and its variants \citep{hopkins2017power,dudejaHsuTensorPCA2021,chooDorsiSparseTensorPCA2021,dudejaHsuCommunicationTensorPCA2024,weinElAlaouiMooreKikuchi2025}. Related work studies nonnegative PCA and higher-order tensor singular-vector estimation from statistical and computational viewpoints \citep{montanariRichardNonnegativePCA2016,zhangXiaTensorSVD2018}.

\paragraph{Sparse PCA and restricted isometry.}
The sparse spectral norm is closely connected to a long line of work on sparse principal component analysis. Early formulations and algorithms include exact or greedy combinatorial methods, $\ell_1$-based formulations, semidefinite relaxations, generalized power methods, iterative thresholding, and low-rank approximation techniques \citep{moghaddamWeissAvidanSparsePCA2005,zouHastieTibshiraniSparsePCA2006,dAspremontSparsePCA2007,aminiWainwrightSparsePCA2009,journeeNesterovRichtarikSepulchre2010,maSparsePCAIterative2013,papailiopoulosDimakisKorokythakis2013}. Statistical theory subsequently characterized optimal estimation rates and information-theoretic thresholds, while also revealing regimes in which standard convex relaxations fail to achieve the statistical limit \citep{caiMaWuSparsePCA2013,deshpandeMontanariInformationSparsePCA2014,krauthgamerNadlerVilenchikSparsePCA2015,deshpandeMontanariSparsePCA2016}. Computational lower bounds have been obtained under planted-clique assumptions and the Small-Set Expansion hypothesis, through sum-of-squares lower bounds, and via more refined average-case reductions \citep{berthetRigolletComplexity2013,maWigdersonSparsePCA2015,wangBerthetSamworthSparsePCA2016,magdonIsmailNPHardness2017,chanApproximabilitySparsePCA2016,brennanBreslerHuleihel2018,brennanBreslerSparsePCA2019,dingKuniskyWeinBandeiraSparsePCA2024}.

A closely related line of work concerns certification of the restricted isometry property. Hidden-clique reductions, worst-case complexity, average-case hardness, bicriteria certification, and time--accuracy trade-offs have all been investigated \citep{koiranHiddenCliquesCertification2014,natarajanComputationalComplexityCertifying2014,wangAverageCaseRIP2016,weedApproximatelyCertifyingRestricted2018,dingAverageCaseTime2021}. Since the restricted isometry constant can be written as a sparse spectral norm of $X^\top X-I$, these works provide especially close precedents for our sparse spectral-norm result. Section~\ref{subsec_sparse_literature} gives the more quantitative comparison, including a strengthened consequence that we derive from the Koiran--Zouzias construction.

\paragraph{Cut norms, Grothendieck inequalities, and discrete multilinear optimization.}
Matrix cut norms belong to a broader family of discrete bilinear optimization problems governed by Grothendieck-type inequalities. Constant-factor approximation of the ordinary matrix cut norm follows from this theory \citep{alon2006approximating}, while related algorithmic and hardness results for quadratic and bilinear optimization have developed several extensions of Grothendieck's inequality \citep{charikarWirthGrothendieck2004,kindlerNaorSchechtmanGrothendieck2010,khotNaorGrothendieck2012}. Higher-order analogues lead naturally to Boolean multilinear optimization and tensor cut norms; polynomial-time approximation algorithms for such discrete multilinear objectives are studied in \cite{heApproximationAlgorithmsDiscrete2013}, with related tensor-approximation techniques appearing in \cite{bhattiproluWeakDecoupling2017}. The normalized sparse matrix cut norm considered here differs from the ordinary cut norm by its cardinality restriction and normalization, while the tensor cut norm introduces genuinely higher-order mode-wise interactions. Thus the classical constant-factor matrix theory does not directly provide constant-factor guarantees for the two constrained or higher-order functionals studied here.

\subsection{Notation}

For a positive integer $m$, write $[m]=\{1,\ldots,m\}$, and let $\mathbb R_+:=[0,\infty)$.
For a set $S\subseteq[p]$, let $\mathbf{1}_S$ denote its indicator vector and $|S|$ its cardinality, while $I_{\{A\}}$ denotes the indicator function of an event $A$.
For a matrix $M$ and an index set $T\subseteq[p]$, let $M_{T,T}$ denote the principal submatrix of $M$ indexed by $T$.
We write $e_i$ for the $i$th standard basis vector and $\operatorname{supp}(u)$ for the support of a vector $u$.
We use $\|\cdot\|_2$, $\|\cdot\|_0$, and $\|\cdot\|_{\mathrm{op}}$ for the Euclidean norm, support size, and matrix operator norm, respectively, and write $\mathbb S^{p-1}:=\{x\in\mathbb R^p:\|x\|_2=1\}$.
The notation $\langle\cdot,\cdot\rangle$ denotes the standard Euclidean or Frobenius inner product, as appropriate, $\otimes$ denotes the tensor product, and $x^{\otimes d}:=x\otimes\cdots\otimes x$ denotes the $d$-fold tensor product of $x$ with itself.
For a likelihood ratio $L_p$ under a reference distribution $\mathbb Q_p$, $(L_p)_{\le D}$ denotes its orthogonal projection in $L^2(\mathbb Q_p)$ onto the space of polynomials of total degree at most $D$ in the observed data, and $\|\cdot\|_{L^2(\mathbb Q_p)}$ denotes the corresponding $L^2$ norm, namely $\|f\|_{L^2(\mathbb Q_p)}=(\mathbb E_{\mathbb Q_p}[f^2])^{1/2}$. The sub-Gaussian norm of a random variable $X$ is $\|X\|_{\psi_2}:=\inf\{t>0:\E\exp(X^2/t^2)\le2\}$. We write $\polylog(p)$ for $(\log p)^{C}$ with an unspecified constant $C>0$.
For two positive sequences $a_p$ and $b_p$, we write $a_p\asymp b_p$ if $a_p/b_p$ is bounded above and below by positive constants.
A subscript $d$ on asymptotic notation, as in $\asymp_d$, $O_d(\cdot)$, or $\Omega_d(\cdot)$, indicates that the implicit constants may depend on the fixed tensor order $d$.
We use $O(\cdot)$, $o(\cdot)$, and $O_{\mathbb P}(\cdot)$ with their standard asymptotic meanings.
All other notation will be defined at its first appearance.

\subsection{Organization}
Section~\ref{sec:tensor} introduces the tensor spectral-norm problem, reviews the cumulant reverse gap, states the general reduction, and derives the tensor spectral and nuclear consequences. Section~\ref{sec:sparse} develops the planted-clique reverse gaps and lower bounds for the sparse spectral and normalized sparse matrix cut norms. Section~\ref{sec:tensor_cut} introduces the tensor cut norm, proves the spiked-tensor low-degree bound, and derives the corresponding conditional approximation lower bound. The appendices contain the polynomial-threshold extensions, the formal spectral--nuclear duality argument, the quantitative refinement of the Koiran--Zouzias reduction, the sparse certificate constructions, and the remaining proofs.

\section{Motivating Example and the General Framework}\label{sec:tensor}

We begin with the high-order cumulant model of \cite{tang2026detectionharderestimationcertain}. In that model, the low-degree likelihood ratio remains bounded at a signal scale for which an empirical cumulant tensor can already be estimated with substantially smaller error in the tensor spectral norm. This previously established reverse gap provides a direct motivating example for the reduction developed below.

\subsection{Tensor spectral norm and approximation guarantees}

We say that an order-$d$ tensor $\bcT$ is \emph{symmetric} if $\bcT_{i_1,\ldots,i_d}=\bcT_{i_{\pi(1)},\ldots,i_{\pi(d)}}$ for every permutation $\pi$ of $\{1,\ldots,d\}$. We write $\operatorname{Sym}^d(\mathbb R^p)$ for the vector space of symmetric order-$d$ tensors. For $\bcT\in\operatorname{Sym}^d(\mathbb R^p)$, its spectral norm is
\begin{equation*}\label{eq_tensor_spec_norm}
\|\bcT\|_s
:=
\sup_{\|x\|_2=1}
\bigl|\langle \bcT,x^{\otimes d}\rangle\bigr|.
\end{equation*}
This is the natural higher-order analogue of the matrix operator norm and is central in multilinear optimization, tensor decomposition, and polynomial optimization over the sphere; see, for example, \cite{limSingularValuesEigenvalues2005,koldaTensorDecompositionsApplications2009,harrowTestingProductStates2013}. In contrast with the matrix case, computing or approximating tensor spectral norms is already NP-hard for order three and above \citep{hillarMostTensorProblems2013a}.

Let $f_p:\operatorname{Sym}^d(\mathbb R^p)\to\mathbb R_+$ be a map intended to approximate $\|\bcT\|_s$. We consider the following three guarantees.
\begin{itemize}[leftmargin=1.8em]
\item \textbf{Two-sided approximation:} there exist $\rho_{f,p},\zeta_{f,p}\ge1$ such that
\begin{equation}\label{eq_tensor_two_sided}
\rho_{f,p}^{-1}\|\bcT\|_s
\le
f_p(\bcT)
\le
\zeta_{f,p}\|\bcT\|_s,
\qquad
\bcT\in\operatorname{Sym}^d(\mathbb R^p),
\end{equation}
with distortion $\gamma_{f,p}:=\rho_{f,p}\zeta_{f,p}$.
\item \textbf{Lower certificate:}
\begin{equation}\label{eq_tensor_lower}
\gamma_{f,p}^{-1}\|\bcT\|_s
\le
f_p(\bcT)
\le
\|\bcT\|_s,
\qquad
\bcT\in\operatorname{Sym}^d(\mathbb R^p).
\end{equation}
\item \textbf{Upper certificate:}
\begin{equation}\label{eq_tensor_upper}
\|\bcT\|_s
\le
f_p(\bcT)
\le
\gamma_{f,p}\|\bcT\|_s,
\qquad
\bcT\in\operatorname{Sym}^d(\mathbb R^p).
\end{equation}
\end{itemize}
For a general norm $\|\cdot\|$ on a space $\mathcal F_p$, we use the same terminology: a \emph{lower certificate} satisfies $\gamma_{f,p}^{-1}\|T\|\le f_p(T)\le\|T\|$ and an \emph{upper certificate} satisfies $\|T\|\le f_p(T)\le\gamma_{f,p}\|T\|$ for every $T\in\mathcal F_p$.
Lower certificates include algorithms that output a candidate vector $x\in\mathbb S^{p-1}$ and return $|\langle\bcT,x^{\otimes d}\rangle|$, as well as covering-based methods and rounded sum-of-squares procedures. Upper certificates include raw sum-of-squares relaxations, which naturally produce certified upper bounds.

A substantial literature develops polynomial-time approximation algorithms for tensor norms, including matrix unfoldings, covering nets, and sum-of-squares relaxations; see, among others, \cite{brandaoEstimatingOperatorNorms2015,bhattiproluWeakDecoupling2017,bhattiproluSumofSquaresCertificates2017,hopkinsTensorPCA2015,heApproximatingTensorNorms2023}. Existing NP-hardness results rule out exact computation and some forms of approximation \citep{hillarMostTensorProblems2013a,harrowTestingProductStates2013}, but they do not identify the precise polynomial dependence of the best possible worst-case distortion on $p$ and $d$.

\subsection{An established reverse detection--estimation gap}\label{subsec:cumulant_reverse_gap}

Set
\begin{equation}\label{eq_choice_n_a}
n=\lfloor p^{d-1}/2\rfloor,
\qquad
a=\frac{p^{1/2}}{n^{1/d}(\log p)^9}.
\end{equation}
For every fixed $d\ge3$, this choice satisfies $p^{d/2}<n<p^{d-1}$ and $a=o(1)$ for all sufficiently large $p$. Under the null law $\mathbb Q_p$, the observations are $Y_i=Z_i$ with $Z_i\stackrel{\mathrm{i.i.d.}}{\sim}\mathcal N(0,I_p)$. Under the alternative law $\mathbb P_p$, first draw $U \in \R^{p}$ with i.i.d.\ Rademacher coordinates, so that $\|U\|_2=\sqrt p$, and conditionally on $U$ let
\[
X_i=\sqrt{a/p}\,W_iU+Z_i,
\qquad
Y_i=S_UX_i,
\qquad
i\in[n],
\]
where the $W_i$ are i.i.d.\ draws from the distribution constructed in Lemma \ref{lemma_existence_1d} and have $O_d(1)$ sub-Gaussian norm. The $Z_i$ are i.i.d.\ draws from $\mathcal N(0,I_p)$, and $U$, $(W_i)_{i\in[n]}$, and $(Z_i)_{i\in[n]}$ are mutually independent. Moreover,
\begin{equation*}\label{eq_whitening}
S_U
=
I_p-\frac{a}{1+a+\sqrt{1+a}}\frac{UU^\top}{p}.
\end{equation*}
The matrix $S_U$ whitens the covariance of $X_i$. Let $L_p=d\mathbb P_p/d\mathbb Q_p$.

\begin{conjecture}[Model-specific low-degree conjecture]\label{conj_low_degree}
Fix $d\ge3$ and consider the cumulant testing family above. 
There exists a constant $c_d>0$, depending only on $d$, such that if
\begin{equation}\label{eq_tensor_ldlr_conjecture_antecedent}
\bigl\|(L_p)_{\le\lfloor c_d(\log p)^2\rfloor}\bigr\|_{L^2(\mathbb Q_p)}
=
O(1),
\end{equation}
where $\|f\|_{L^2(\mathbb Q_p)}=(\mathbb E_{\mathbb Q_p}[f^2])^{1/2}$, 
then no sequence of tests generated by a single uniform polynomial-time algorithm (possibly randomized) strongly distinguishes $\mathbb P_p$ from $\mathbb Q_p$. Equivalently, every such test sequence satisfies
\begin{equation}\label{eq_tensor_polytime_strong_hardness}
\limsup_{p\to\infty}
\left[
\mathbb Q_p(\phi_p=1)+\mathbb P_p(\phi_p=0)
\right]
>
0.
\end{equation}
\end{conjecture}

For suitably structured testing families, boundedness of the low-degree likelihood-ratio norm through polylogarithmic degree is used as evidence against polynomial-time strong detection; see \cite{hopkins2018statistical,kunisky2019notes} for foundational formulations and \cite{schramm2022computational} for related uses of low-degree polynomials as a proxy.

Let $\widehat{\bcM}_j=n^{-1}\sum_{i=1}^nY_i^{\otimes j}$ and let $\widehat{\bcK}_d$ be the plug-in sample cumulant formed from the empirical moment tensors.

\begin{proposition}[Cumulant reverse gap]\label{prop_cumulant_reverse_gap}
For every fixed $d\ge3$ and every fixed $c>0$, the testing family above satisfies
\[
\bigl\|(L_p)_{\le\lfloor c(\log p)^2\rfloor}\bigr\|_{L^2(\mathbb Q_p)}
=
O(1).
\]
For a sufficiently large constant $C_d>0$ depending only on $d$, define
\begin{equation}\label{eq_cumulant_signal_radius}
s_p
=
|\kappa_d(W_1)|
\left(\frac{a}{1+a}\right)^{d/2}
\asymp_d
\frac{p^{d/4}}{n^{1/2}(\log p)^{9d/2}},
\qquad
r_p
=
C_d\frac{p^{d/2}}{n},
\end{equation}
where $\kappa_d(W_1)$ denotes the $d$th cumulant of $W_1$.
Then
\[
\mathbb Q_p\bigl(\|\widehat{\bcK}_d\|_s\le r_p\bigr)\longrightarrow1,
\qquad
\mathbb P_p\bigl(\|\widehat{\bcK}_d-\bcK_d(Y_i\mid U)\|_s\le r_p\bigr)\longrightarrow1,
\]
and $\|\bcK_d(Y_i\mid U)\|_s=s_p$ for every realization of $U$. Consequently,
\begin{equation*}\label{eq_cumulant_reverse_gap}
\frac{s_p}{r_p}
\asymp_d
\frac{p^{d/4-1/2}}{(\log p)^{9d/2}}.
\end{equation*}
\end{proposition}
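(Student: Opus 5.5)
The proof will mostly import the reverse gap established in \cite{tang2026detectionharderestimationcertain}, with the parameter choice \eqref{eq_choice_n_a} substituted in. I would organize it in four steps.

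\textbf{Step 1: low-degree bound.} I would expand the likelihood ratio $L_p$ conditionally on $U$. Because $S_U$ whitens the covariance, each $Y_i\mid U$ has mean zero and identity covariance. By the construction of $W$ in Lemma~\ref{lemma_existence_1d}, the one-dimensional law of $W$ also matches Gaussian moments up to order $d-1$. Hence, in the Hermite expansion, every conditional cumulant of $Y_i\mid U$ of order below $d$ agrees with the Gaussian one.

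Write $\ell=\langle U,U'\rangle/p$ for the normalized overlap of two independent replicas. The degree-$D$ projection then satisfies
\[
\|(L_p)_{\le D}\|_{L^2(\mathbb Q_p)}^2=\mathbb E_{U,U'}\Bigl[\sum_{k\le D} \tfrac{1}{k!}\textstyle\sum\cdots\Bigr],
\]
where each observation contributes a factor of order $(a\,\ell)^{m}$ with $m\ge d$, or $m=0$. Summing over the ways to distribute the degree among the $n$ samples, the dominant terms have the form $\bigl(n\,a^{d}\ell^{d}\bigr)^{j}$. Since $\ell$ is sub-Gaussian at scale $p^{-1/2}$, one has $\mathbb E|\ell|^{dj}\lesssim (Cdj/p)^{dj/2}$. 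It follows that
\[
n a^d p^{-d/2}=(\log p)^{-9d}.
\]
This polylog slack absorbs the combinatorial factors $(Cj)^{O(j)}$ for all $j\le D=c(\log p)^2$, because $j\le c(\log p)^2$ gives $j^{O(1)}\le(\log p)^{O(1)}$ per factor. The series is therefore $1+o(1)$. This combinatorial control, including the cross terms coming from the whitening by $S_U$, is the main obstacle. I would follow the argument of the cited paper rather than redo it.

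\textbf{Step 2: signal size.} Conditionally on $U$, write $Y_i=S_U\bigl(\sqrt{a/p}\,W_iU+Z_i\bigr)$. Cumulants of order $d\ge3$ vanish for the Gaussian part, and $\kappa_d$ is multilinear, so
\[
\bcK_d=\kappa_d(W_1)\,(a/p)^{d/2}\,(S_UU)^{\otimes d}.
\]
Next, $S_UU=U\bigl(1-\tfrac{a}{1+a+\sqrt{1+a}}\bigr)=U/\sqrt{1+a}$; this identity holds because $\tfrac{a}{1+a+\sqrt{1+a}}=1-\tfrac{1}{\sqrt{1+a}}$. Using $\|U^{\otimes d}\|_s=p^{d/2}$ then gives exactly $s_p$. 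The stated asymptotic form follows by inserting $a=p^{1/2}n^{-1/d}(\log p)^{-9}$.

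\textbf{Step 3: estimation radius.} I would write $\widehat{\bcK}_d-\bcK_d$ as a polynomial in the centered empirical moments $\widehat{\bcM}_j-\bcM_j$. The dominant term is $\widehat{\bcM}_d-\bcM_d$ together with the lower-order corrections. I would bound each term in spectral norm by an $\varepsilon$-net over $\mathbb S^{p-1}$ combined with a Bernstein-type tail for $n^{-1}\sum_i(\langle x,Y_i\rangle^d-\mathbb E)$, which are sums of sub-Weibull variables. This yields an error of order $\sqrt{p/n}+p^{d/2}/n$. Since $n<p^{d-1}$, the second term dominates, giving $r_p$.

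To make the tail bound valid under $\mathbb P_p$ uniformly in $U$, I would use that the $Y_i$ are sub-Gaussian with $O_d(1)$ norm because $a=o(1)$. Products of lower-order moment errors, such as $(\widehat{\bcM}_2-I)\otimes(\widehat{\bcM}_{d-2}-\bcM_{d-2})$, are of smaller order. The heavy-tail truncation needed to obtain the $p^{d/2}/n$ regime is a secondary technical point, again handled as in the cited work.

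\textbf{Step 4: conclusion.} The ratio $s_p/r_p$ follows by substituting $n\asymp p^{d-1}$ into $p^{d/4}n^{1/2}(\log p)^{-9d/2}/p^{d/2}$, which gives $p^{d/4-1/2}(\log p)^{-9d/2}$.
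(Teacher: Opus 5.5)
Your proposal is correct and follows essentially the paper's route. The paper likewise imports the low-degree bound from Lemma~1 of the cited work; its per-degree factor $\frac{a}{1+a}\,C_d m^4 n^{1/d}p^{-1/2}\le C_d m^4(\log p)^{-9}$ is exactly your slack $na^dp^{-d/2}=(\log p)^{-9d}$, and the absorption works because $m^4\le c^4(\log p)^8<(\log p)^9$. It also imports the estimation bound $C_d\max\{p^{d/2}/n,\sqrt{p/n}\}$ from Theorem~4 there, uniformly in $U$, and computes $s_p$ via multilinearity of cumulants and $\|S_UU\|_2^2=p/(1+a)$ just as you do.

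The only points worth making explicit are these: the $\asymp_d$ claim for $s_p$ uses $\kappa_d(W_1)\ne0$ from Lemma~\ref{lemma_existence_1d}, and the cited low-degree lemma requires the degree cutoff to be $o(\min\{p,n\})$, which holds for $c(\log p)^2$.
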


The proof is collected in Section~\ref{sec_proof}. The essential point is that the empirical cumulant is already accurate at radius $r_p$, while Conjecture~\ref{conj_low_degree} places the signal scale $s_p$ below the conjectured polynomial-time detection threshold.

\subsection{The general reverse-gap reduction}\label{sec:framework}
For each $p$, let $\mathbb Q_p$ and $\mathbb P_p$ denote the null and alternative laws of the observed data. The alternative may be a mixture over a latent parameter. Let $\mu_p$ take values in a normed vector space $\mathcal F_p$, and assume
\begin{equation}\label{eq_general_separation}
\mu_p=0
\quad\text{under }\mathbb Q_p,
\qquad
\|\mu_p\|\ge s_p
\quad\text{under }\mathbb P_p,
\end{equation}
where $s_p>0$ and the second statement holds almost surely with respect to any latent randomness under $\mathbb P_p$.

Let $\mathcal A$ be a prescribed class of sequences $(\phi_p)_{p\ge1}$ of possibly randomized $\{0,1\}$-valued tests. In computational applications, membership in $\mathcal A$ requires the entire sequence to be generated by a single uniform algorithm. We say that $(\phi_p)$ \emph{strongly distinguishes} $\mathbb P_p$ from $\mathbb Q_p$ if
\begin{equation*}\label{eq_strong_distinguishability}
\mathbb Q_p(\phi_p=1)+\mathbb P_p(\phi_p=0)
\longrightarrow0.
\end{equation*}
We impose the strong-detection hardness condition that no sequence in $\mathcal A$ strongly distinguishes. Equivalently,
\begin{equation}\label{eq_no_strong_distinguishability}
\limsup_{p\to\infty}
\left[
\mathbb Q_p(\phi_p=1)+\mathbb P_p(\phi_p=0)
\right]
>
0
\qquad
\text{for every }(\phi_p)_{p\ge1}\in\mathcal A.
\end{equation}

Let $\widehat\mu_p$ be an estimator and let $r_p>0$ be a deterministic estimation radius such that
\begin{equation}\label{eq_estimation_radius}
\mathbb Q_p\bigl(\|\widehat\mu_p\|\le r_p\bigr)\longrightarrow1,
\qquad
\mathbb P_p\bigl(\|\widehat\mu_p-\mu_p\|\le r_p\bigr)\longrightarrow1.
\end{equation}
Let $\mathcal B$ be a nonempty class of sequences $(f_p)_{p\ge1}$ of norm approximators $f_p:\mathcal F_p\to\mathbb R_+$. In computational applications, every sequence in $\mathcal B$ is required to be generated by a single uniform algorithm. Each $(f_p)_{p\ge1}\in\mathcal B$ is equipped with deterministic approximation factors $\rho_{f,p},\zeta_{f,p}\ge1$ satisfying
\begin{equation*}\label{eq_general_approx}
\rho_{f,p}^{-1}\|T\|
\le
f_p(T)
\le
\zeta_{f,p}\|T\|,
\qquad
T\in\mathcal F_p,
\end{equation*}
and we write $\gamma_{f,p}:=\rho_{f,p}\zeta_{f,p}$.

We require the following threshold-closure property: for every $(f_p)_{p\ge1}\in\mathcal B$, the test sequence
\begin{equation}\label{eq_closure}
\phi_p
=
I_{\{f_p(\widehat\mu_p)>s_p/(2\rho_{f,p})\}}
\end{equation}
belongs to $\mathcal A$. In computational applications, we say that the approximation factors of a family $(f_p)_{p\ge1}\in\mathcal B$ are \emph{efficiently computable} if the uniform algorithm generating the family also outputs $\rho_{f,p}$ and $\zeta_{f,p}$ on input $p$, within the resource bounds defining $\mathcal B$. 
The threshold-closure property then requires a single algorithm, operating within the resource bounds defining $\mathcal A$, to compute the approximation factors and the displayed separating thresholds for every $p$.

\begin{theorem}[General reduction to approximation lower bounds]\label{thm_framework}
Suppose that \eqref{eq_general_separation}, \eqref{eq_no_strong_distinguishability}, \eqref{eq_estimation_radius}, and the threshold-closure property \eqref{eq_closure} hold. Then every $(f_p)_{p\ge1}\in\mathcal B$ satisfies
\begin{equation}\label{eq_framework_lb}
\limsup_{p\to\infty}
\frac{\gamma_{f,p}r_p}{s_p}
\ge
\frac14.
\end{equation}
The same conclusion applies to lower certificates by setting $\zeta_{f,p}=1$ and to upper certificates by setting $\rho_{f,p}=1$.
\end{theorem}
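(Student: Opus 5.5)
We argue by contradiction. Fix $(f_p)_{p\ge1}\in\mathcal B$ and suppose that \eqref{eq_framework_lb} fails, so that $\limsup_{p}\gamma_{f,p}r_p/s_p<1/4$. Then for all sufficiently large $p$ we have $\gamma_{f,p}r_p<s_p/4$. We will show that the threshold test \eqref{eq_closure}, $\phi_p=I_{\{f_p(\widehat\mu_p)>s_p/(2\rho_{f,p})\}}$, strongly distinguishes $\mathbb P_p$ from $\mathbb Q_p$. By the threshold-closure property this test sequence lies in $\mathcal A$, which contradicts \eqref{eq_no_strong_distinguishability}.

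\emph{Null error.} On the event $\{\|\widehat\mu_p\|\le r_p\}$, whose $\mathbb Q_p$-probability tends to one by \eqref{eq_estimation_radius}, the upper approximation bound gives
\[
f_p(\widehat\mu_p)\le\zeta_{f,p}r_p=\gamma_{f,p}r_p/\rho_{f,p}<s_p/(4\rho_{f,p})<s_p/(2\rho_{f,p}).
\]
Hence $\mathbb Q_p(\phi_p=1)\to0$.

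\emph{Alternative error.} Under $\mathbb P_p$, almost surely $\|\mu_p\|\ge s_p$ by \eqref{eq_general_separation}. On the event $\{\|\widehat\mu_p-\mu_p\|\le r_p\}$, the triangle inequality yields $\|\widehat\mu_p\|\ge s_p-r_p$. Since $\gamma_{f,p}\ge1$, we have $r_p\le\gamma_{f,p}r_p<s_p/4$, so $\|\widehat\mu_p\|>3s_p/4$. The lower approximation bound then gives
\[
f_p(\widehat\mu_p)\ge\rho_{f,p}^{-1}\|\widehat\mu_p\|>3s_p/(4\rho_{f,p})>s_p/(2\rho_{f,p}).
\]
Thus $\mathbb P_p(\phi_p=0)\to0$. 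One should check that this event is handled jointly with the latent randomness; this is fine because \eqref{eq_estimation_radius} is stated under the mixture law $\mathbb P_p$ and the separation holds almost surely.

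\emph{Conclusion.} Combining the two bounds, $\mathbb Q_p(\phi_p=1)+\mathbb P_p(\phi_p=0)\to0$, so $(\phi_p)$ strongly distinguishes. This contradicts \eqref{eq_no_strong_distinguishability}, and \eqref{eq_framework_lb} follows. The lower- and upper-certificate cases are the special cases $\zeta_{f,p}=1$ and $\rho_{f,p}=1$ respectively, since the threshold in \eqref{eq_closure} and the argument above are unchanged.

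\emph{Main subtlety.} The analysis is elementary; the substantive point is bookkeeping. The contradiction is only valid because the test actually belongs to $\mathcal A$. This requires that the threshold $s_p/(2\rho_{f,p})$ be computable by the same uniform algorithm, which is exactly what threshold closure provides, together with the efficient computability of $\rho_{f,p}$.
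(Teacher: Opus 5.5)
Your proposal is correct and follows the paper's proof essentially step for step. Like the paper, you argue by contradiction with the threshold test $I_{\{f_p(\widehat\mu_p)>s_p/(2\rho_{f,p})\}}$, bound the null error via $\zeta_{f,p}r_p<s_p/(4\rho_{f,p})$, and bound the alternative error by using $\gamma_{f,p}\ge1$ to get $r_p<s_p/4$, so that $f_p(\widehat\mu_p)>3s_p/(4\rho_{f,p})$.
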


\begin{proof}
Fix $(f_p)_{p\ge1}\in\mathcal B$ and suppose, toward a contradiction, that \eqref{eq_framework_lb} fails. Then
\begin{equation}\label{eq_framework_forbidden}
\gamma_{f,p}
<
\frac{s_p}{4r_p}
\end{equation}
for all sufficiently large $p$. By threshold closure,
\[
\phi_p
=
I_{\{f_p(\widehat\mu_p)>s_p/(2\rho_{f,p})\}}
\]
belongs to $\mathcal A$.

On the event $\{\|\widehat\mu_p\|\le r_p\}$ under $\mathbb Q_p$,
\[
f_p(\widehat\mu_p)
\le
\zeta_{f,p}r_p
=
\frac{\gamma_{f,p}r_p}{\rho_{f,p}}
<
\frac{s_p}{4\rho_{f,p}}
<
\frac{s_p}{2\rho_{f,p}}.
\]
Hence $\mathbb Q_p(\phi_p=1)\to0$.

Because $\gamma_{f,p}\ge1$, \eqref{eq_framework_forbidden} implies $r_p<s_p/4$ for all sufficiently large $p$. On the event $\{\|\widehat\mu_p-\mu_p\|\le r_p\}$ under $\mathbb P_p$,
\[
f_p(\widehat\mu_p)
\ge
\rho_{f,p}^{-1}\|\widehat\mu_p\|
\ge
\rho_{f,p}^{-1}\bigl(\|\mu_p\|-r_p\bigr)
>
\frac{3s_p}{4\rho_{f,p}}
>
\frac{s_p}{2\rho_{f,p}}.
\]
Hence $\mathbb P_p(\phi_p=0)\to0$, contradicting \eqref{eq_no_strong_distinguishability}.
\end{proof}

Theorem~\ref{thm_framework} is informative whenever $s_p/r_p\to\infty$. It then rules out $\gamma_{f,p}=o(s_p/r_p)$ for every admissible approximator family.

\subsection{Computational lower bound for tensor spectral norm}

Applying Theorem~\ref{thm_framework} to the cumulant testing family, with the scales provided by Proposition~\ref{prop_cumulant_reverse_gap}, yields the following lower bound. The proof is collected in Section~\ref{sec_proof}.
\begin{theorem}[Conditional lower bound for tensor spectral-norm approximation]\label{thm_main}
Fix $d\ge3$ and assume Conjecture~\ref{conj_low_degree}. Let $(f_p)_{p\ge1}$ be a family of tensor spectral norm approximators generated by a single uniform deterministic polynomial-time algorithm and satisfying one of \eqref{eq_tensor_two_sided}, \eqref{eq_tensor_lower}, or \eqref{eq_tensor_upper}. Assume that its approximation factors are efficiently computable and that the ratios $s_p/(2\rho_{f,p})$ are uniformly polynomial-time computable. 
Then we have
\[
\limsup_{p\to\infty}
\gamma_{f,p}
\frac{(\log p)^{9d/2}}{p^{d/4-1/2}}
>
0.
\]
\end{theorem}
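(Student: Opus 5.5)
The proof instantiates Theorem~\ref{thm_framework} with the cumulant testing family of Section~\ref{subsec:cumulant_reverse_gap}. We take $\mathcal F_p=\operatorname{Sym}^d(\mathbb R^p)$ with $\|\cdot\|=\|\cdot\|_s$. The target is $\mu_p=0$ under $\mathbb Q_p$ and $\mu_p=\bcK_d(Y_i\mid U)$ under $\mathbb P_p$, and the estimator is $\widehat\mu_p=\widehat{\bcK}_d$, with $s_p$ and $r_p$ as in \eqref{eq_cumulant_signal_radius}. We take $\mathcal A$ to be the class of test sequences generated by a single uniform (possibly randomized) polynomial-time algorithm. We take $\mathcal B$ to be the class of uniform deterministic polynomial-time approximator families with efficiently computable factors.

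The hypotheses are checked in turn.
\begin{itemize}
\item \emph{Separation \eqref{eq_general_separation}.} Proposition~\ref{prop_cumulant_reverse_gap} gives $\|\bcK_d(Y_i\mid U)\|_s=s_p$ for every realization of $U$, so the separation holds almost surely.
\item \emph{Estimation radius \eqref{eq_estimation_radius}.} This is exactly the pair of probability statements in Proposition~\ref{prop_cumulant_reverse_gap}.
\item \emph{Hardness \eqref{eq_no_strong_distinguishability}.} Proposition~\ref{prop_cumulant_reverse_gap} shows the low-degree norm is $O(1)$ at degree $\lfloor c(\log p)^2\rfloor$ for every fixed $c>0$. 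In particular this holds for $c=c_d$, which is the antecedent \eqref{eq_tensor_ldlr_conjecture_antecedent}. Conjecture~\ref{conj_low_degree} then yields \eqref{eq_tensor_polytime_strong_hardness}, which is \eqref{eq_no_strong_distinguishability} for $\mathcal A$.
\end{itemize}

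The main work is threshold closure \eqref{eq_closure}. We must exhibit one uniform polynomial-time algorithm that, on input $p$ and data $Y_1,\dots,Y_n$, does four things:
\begin{enumerate}
\item computes $\widehat{\bcK}_d$;
\item evaluates $f_p(\widehat{\bcK}_d)$;
\item computes the threshold $s_p/(2\rho_{f,p})$;
\item compares the two values.
\end{enumerate}

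For step 1, the input has size $np=\Theta(p^d)$. Each empirical moment $\widehat{\bcM}_j$, $j\le d$, has at most $p^j$ entries, each an average of $n$ products. The plug-in cumulant is a fixed, $d$-dependent combination (over set partitions) of products of these moments. So $\widehat{\bcK}_d$ is computable in time polynomial in $p$, and hence in the input size. Since $d$ is fixed, the input size $\Theta(p^d)$ is polynomially related to $p$, so polynomial time in $p$ and polynomial time in the input coincide.

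Step 2 is polynomial by the assumption on $(f_p)$, composed with step 1. Step 3 is polynomial by the hypothesis that $\rho_{f,p}$ and $s_p/(2\rho_{f,p})$ are uniformly polynomial-time computable. Some care is needed with the representation of real numbers. I would handle it by assuming, as the statement implicitly does, that these quantities are given to sufficient precision. Alternatively, one can note that the margins in the proof of Theorem~\ref{thm_framework} are constant factors: the relevant values are below $s_p/(4\rho_{f,p})$ or above $3s_p/(4\rho_{f,p})$. Hence a rational threshold anywhere strictly between these, say within a factor $1\pm 1/8$ of $s_p/(2\rho_{f,p})$, works equally well. The three certificate types are covered by setting $\zeta_{f,p}=1$ or $\rho_{f,p}=1$, as in Theorem~\ref{thm_framework}. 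A deterministic approximator composed with deterministic preprocessing yields a test in $\mathcal A$.

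With all hypotheses verified, Theorem~\ref{thm_framework} gives $\limsup_p \gamma_{f,p}r_p/s_p\ge 1/4$. Substituting the ratio $s_p/r_p\asymp_d p^{d/4-1/2}/(\log p)^{9d/2}$ from Proposition~\ref{prop_cumulant_reverse_gap} gives
\[
\limsup_{p\to\infty}\gamma_{f,p}\frac{(\log p)^{9d/2}}{p^{d/4-1/2}}\ge c'_d>0,
\]
which is the claim. The step I expect to need the most care is threshold closure, specifically checking that computing the plug-in cumulant and the threshold stays within a single uniform polynomial-time procedure. The rest is bookkeeping.
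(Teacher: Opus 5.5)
Your proposal is correct and follows the paper's proof essentially step for step. The paper likewise plugs the cumulant family, with $\widehat{\bcK}_d$ computed in polynomial time via the moment--cumulant partition formula, into the general reduction. It obtains hardness from Conjecture~\ref{conj_low_degree} at $c=c_d$, and it converts $\limsup_p\gamma_{f,p}r_p/s_p\ge 1/4$ into the stated rate using $n\asymp p^{d-1}$.

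The only differences are cosmetic. The paper proves Proposition~\ref{prop_cumulant_reverse_gap} within the same argument rather than citing it. As a bookkeeping point, you should take $\mathcal B$ to be the singleton $\{(f_p)\}$, because threshold closure must hold for every member of $\mathcal B$, and your class as defined does not include the computability of $s_p/(2\rho_{f,p})$.
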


\begin{remark}[Polynomial-threshold extension]\label{rem_polynomial_threshold_extension}
Question~4.2 in Section~4.1 of \cite{kunisky2019notes} asks whether boundedness of the low-degree likelihood-ratio norm through degree $D$ rules out strong distinction by thresholding a degree-$D$ polynomial. An affirmative answer for the present cumulant testing family supplies the strong-detection hardness condition for the polynomial-threshold test class. Under that additional assumption, the lower bound holds without Conjecture~\ref{conj_low_degree} or a running-time restriction for every approximator of polynomial-threshold degree at most $c_d(\log p)^2$. Section~\ref{sec:polynomial_threshold_approximators} gives the precise definition, statement, and proof.
\end{remark}

\begin{remark}[Tightness and known upper bounds]
    Several independent results suggest that the exponent $d/4-1/2$ in Theorem~\ref{thm_main} captures the correct computational scale. For entrywise nonnegative tensors, polynomial-time algorithms achieve distortion of order $p^{d/4-1/2}$ \citep{bhattiproluWeakDecoupling2017}. For random Rademacher or Gaussian tensors, SoS-based certification results and matching or nearly matching integrality gaps exhibit the same scale \citep{bhattiproluSumofSquaresCertificates2017,hopkins2017power}. Although these results concern restricted input classes rather than globally valid worst-case approximators, they provide independent evidence that the exponent in our conditional lower bound is tight.

    For arbitrary signed tensors in the worst case, however, the best known polynomial-time guarantees remain of order $p^{d/2-1}=p^{(d-2)/2}$ up to logarithmic factors, obtainable through matrix unfolding or polynomial-size net constructions \citep{bhattiproluWeakDecoupling2017,heApproximatingTensorNorms2023}. A closely related separation already appears in the SoS literature: random tensors exhibit tight certification gaps at the $p^{d/4-1/2}$ scale, whereas general worst-case rounding guarantees for arbitrary tensors remain at the larger $p^{d/2-1}$ scale.
\end{remark}

\subsection{A duality consequence for tensor nuclear norm}\label{subsec:tensor_nuclear_duality}

For $\bcT\in\operatorname{Sym}^d(\mathbb R^p)$, define the projective tensor nuclear norm by
\begin{equation*}\label{eq_tensor_nuclear_norm}
\|\bcT\|_*
:=
\inf\left\{
\sum_{\ell=1}^r |a_\ell|:
\bcT=\sum_{\ell=1}^r a_\ell x_\ell^{\otimes d},\quad
r\in\mathbb N,\quad
\|x_\ell\|_2=1\ \text{for every }\ell\in[r]
\right\}.
\end{equation*}
For symmetric tensors, restricting to symmetric rank-one decompositions entails no loss \citep{friedlandNuclearNormHigherOrder2018}. Its dual norm on $\operatorname{Sym}^d(\mathbb R^p)$ is the tensor spectral norm:
\begin{equation}\label{eq_tensor_spectral_nuclear_duality}
\|\bcT\|_*
=
\sup_{\|\bcS\|_s\le1}|\langle\bcT,\bcS\rangle|,
\qquad
\|\bcS\|_s
=
\sup_{\|\bcT\|_*\le1}|\langle\bcS,\bcT\rangle|.
\end{equation}

For a norm $g_p$ on $\operatorname{Sym}^d(\mathbb R^p)$, define its dual norm by
$$
g_p^\circ(\bcS)
:=
\sup_{g_p(\bcT)\le1}|\langle\bcS,\bcT\rangle|.
$$
Suppose that, for some $\rho_{g,p},\zeta_{g,p}\ge1$,
\begin{equation}\label{eq_tensor_nuclear_approximation}
\rho_{g,p}^{-1}\|\bcT\|_*
\le
g_p(\bcT)
\le
\zeta_{g,p}\|\bcT\|_*,
\qquad
\bcT\in\operatorname{Sym}^d(\mathbb R^p).
\end{equation}
The corresponding unit balls satisfy
\[
\zeta_{g,p}^{-1}\{\bcT:\|\bcT\|_*\le1\}
\subseteq
\{\bcT:g_p(\bcT)\le1\}
\subseteq
\rho_{g,p}\{\bcT:\|\bcT\|_*\le1\}.
\]
Taking support functions and using \eqref{eq_tensor_spectral_nuclear_duality} gives
\begin{equation}\label{eq_dual_spectral_approximation}
\zeta_{g,p}^{-1}\|\bcS\|_s
\le
g_p^\circ(\bcS)
\le
\rho_{g,p}\|\bcS\|_s.
\end{equation}
Thus dualization preserves the distortion while interchanging the two approximation factors.

We call a family $(g_p)_{p\ge1}$ \emph{efficiently dualizable} if both $(g_p)_{p\ge1}$ and $(g_p^\circ)_{p\ge1}$ are uniformly computable by deterministic polynomial-time algorithms. This is precisely the computational closure property needed to transfer Theorem~\ref{thm_main} through duality. In the weak-oracle model, computation of a norm and computation of its dual are polynomial-time interreducible \citep{friedlandComputationalComplexityDuality2016}.

Let
\begin{equation*}\label{eq_cumulant_signal_nuclear_corollary}
s_p^{\mathrm{cum}}
:=
|\kappa_d(W_1)|
\left(\frac{a}{1+a}\right)^{d/2}
\end{equation*}
denote the cumulant signal level from \eqref{eq_cumulant_signal_radius}.

\begin{corollary}[Conditional tensor nuclear-norm lower bound]\label{cor_tensor_nuclear_lower_bound}
Fix $d\ge3$ and assume Conjecture~\ref{conj_low_degree}. Let $(g_p)_{p\ge1}$ be an efficiently dualizable family of norms on $\operatorname{Sym}^d(\mathbb R^p)$ satisfying \eqref{eq_tensor_nuclear_approximation}, and let
$
\gamma_{g,p}:=\rho_{g,p}\zeta_{g,p}.
$
Assume that the approximation factors are efficiently computable and that the ratios $s_p^{\mathrm{cum}}/(2\zeta_{g,p})$ are uniformly polynomial-time computable. Then
\begin{equation}\label{eq_tensor_nuclear_lower_bound}
\limsup_{p\to\infty}
\gamma_{g,p}
\frac{(\log p)^{9d/2}}{p^{d/4-1/2}}
>
0.
\end{equation}
In particular, no such family can achieve distortion
\[
o\left(
\frac{p^{d/4-1/2}}{(\log p)^{9d/2}}
\right).
\]
\end{corollary}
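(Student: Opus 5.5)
The plan is to reduce Corollary~\ref{cor_tensor_nuclear_lower_bound} to Theorem~\ref{thm_main} through duality. Given an efficiently dualizable family $(g_p)$ satisfying \eqref{eq_tensor_nuclear_approximation}, set $f_p:=g_p^\circ$. By \eqref{eq_dual_spectral_approximation}, $f_p$ is a two-sided spectral-norm approximator in the sense of \eqref{eq_tensor_two_sided}, with factors
\[
\rho_{f,p}=\zeta_{g,p},\qquad \zeta_{f,p}=\rho_{g,p},
\]
so that $\gamma_{f,p}=\gamma_{g,p}$.

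The first step is to justify \eqref{eq_dual_spectral_approximation} carefully. The inclusion of unit balls follows directly from \eqref{eq_tensor_nuclear_approximation}. Taking the supremum of $|\langle\bcS,\cdot\rangle|$ over each ball then gives
\[
\zeta_{g,p}^{-1}\|\bcS\|_s\le g_p^\circ(\bcS)\le\rho_{g,p}\|\bcS\|_s,
\]
using the second identity in \eqref{eq_tensor_spectral_nuclear_duality}, which is valid by finite-dimensional bipolar duality. No reflexivity issue arises in finite dimensions.

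The second step checks the computational hypotheses of Theorem~\ref{thm_main} for $(f_p)$.
\begin{itemize}[leftmargin=1.8em]
\item \textbf{Uniformity.} Efficient dualizability says $(g_p^\circ)$ is computed uniformly by a deterministic polynomial-time algorithm.
\item \textbf{Efficiently computable factors.} The factors of $f_p$ are those of $g_p$ with their roles swapped, so they are efficiently computable.
\item \textbf{Threshold ratios.} The required ratio $s_p/(2\rho_{f,p})$ equals $s_p^{\mathrm{cum}}/(2\zeta_{g,p})$, which is assumed uniformly polynomial-time computable.
\end{itemize}
With these in place, threshold closure \eqref{eq_closure} holds for the test $I_{\{f_p(\widehat{\bcK}_d)>s_p/(2\rho_{f,p})\}}$. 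This test is computed by first forming $\widehat{\bcK}_d$ in polynomial time, which is polynomial in $n\,p^d$ and hence in $p$ for fixed $d$, then evaluating $g_p^\circ$, and then comparing with the threshold. Theorem~\ref{thm_main} therefore applies and gives
\[
\limsup_{p\to\infty}\gamma_{g,p}\,\frac{(\log p)^{9d/2}}{p^{d/4-1/2}}>0,
\]
which is \eqref{eq_tensor_nuclear_lower_bound}. The final ``in particular'' clause is simply the contrapositive.

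The main obstacle is the interface between ``computing $g_p^\circ$'' and the exact-arithmetic model implicit in Theorem~\ref{thm_main}. The definition of efficient dualizability is built precisely to absorb this, so I would state explicitly that it is the dual family, not $g_p$ itself, that enters the reduction. I would also remark that, absent exact computability, one could invoke the weak-oracle interreducibility of \citet{friedlandComputationalComplexityDuality2016} together with a constant-factor slack in the threshold. The factor $1/4$ in Theorem~\ref{thm_framework} leaves room for such slack. Finally, I would note that the requirement that $g_p$ be a norm is what makes $g_p^\circ$ well defined and finite, and that the dual of the dual returns $g_p$; that last fact is not needed here.
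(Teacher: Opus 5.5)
Your proposal is correct and follows the paper's proof essentially step for step. You set $f_p=g_p^\circ$, derive $\zeta_{g,p}^{-1}\|\bcS\|_s\le f_p(\bcS)\le\rho_{g,p}\|\bcS\|_s$ from the unit-ball inclusions and support functions (so $\rho_{f,p}=\zeta_{g,p}$, $\zeta_{f,p}=\rho_{g,p}$, $\gamma_{f,p}=\gamma_{g,p}$), and check uniformity, factor computability, and that the required threshold equals $s_p^{\mathrm{cum}}/(2\zeta_{g,p})$ before applying Theorem~\ref{thm_main}; your closing remarks on weak oracles are optional commentary that the argument does not need.
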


By \eqref{eq_dual_spectral_approximation}, the dual family $(g_p^\circ)_{p\ge1}$ approximates the tensor spectral norm with lower and upper approximation factors $\zeta_{g,p}$ and $\rho_{g,p}$, respectively, and hence with the same distortion $\gamma_{g,p}$. Moreover, the separating threshold required by Theorem~\ref{thm_main} becomes exactly $s_p^{\mathrm{cum}}/(2\zeta_{g,p})$. The corollary therefore follows directly from Theorem~\ref{thm_main}. A formal proof is given in Section~\ref{sec:tensor_nuclear_duality}.

For comparison, \cite{friedlandNuclearNormHigherOrder2018} established NP-hardness results for computing the projective tensor nuclear norm, including approximation to prescribed accuracy, but these results do not provide a dimension-dependent multiplicative lower bound. On the algorithmic side, tensor-partition methods yield polynomial-time distortion $O_d(p^{(d-2)/2})$ in equal dimensions \citep{liBoundsSpectralNuclear2016}. The deterministic sphere-covering algorithms of \cite{heApproximatingTensorNorms2023} improve this general worst-case guarantee for both the tensor spectral and nuclear norms to
\[
O_d\left(
\left(\frac{p}{\log p}\right)^{(d-2)/2}
\right).
\]
Thus a polynomial gap remains between the conditional lower scale in Corollary~\ref{cor_tensor_nuclear_lower_bound} and the best known general worst-case upper scale.

\section{Sparse Spectral Norm and Normalized Sparse Matrix Cut Norm}\label{sec:sparse}

We next use the planted-clique model to produce two reverse detection--estimation gaps from the same centered adjacency estimator.

\subsection{The reverse detection--estimation gaps}\label{subsec:sparse_reverse_gaps}

For a real symmetric matrix $M\in\mathbb R^{p\times p}$ and an integer $2\le k\le p$, define the {\it $k$-sparse spectral norm} by
\begin{equation}\label{eq_sparse_spectral_norm}
\|M\|_{\mathrm{sp},k}
:=
\sup_{\substack{\|u\|_2=1\\ \|u\|_0\le k}}
|u^\top Mu|
=
\max_{\substack{T\subseteq[p]\\1\le|T|\le k}}
\|M_{T,T}\|_{\mathrm{op}}.
\end{equation}

For an arbitrary real matrix $M\in\mathbb R^{p\times p}$, define the {\it normalized $k$-sparse matrix cut norm} by
\begin{equation}\label{eq_sparse_cut_norm}
\|M\|_{\square,k}
:=
\max_{\substack{R,C\subseteq[p]\\1\le|R|,|C|\le k}}
\frac{\bigl|\mathbf 1_R^\top M\mathbf 1_C\bigr|}{\sqrt{|R||C|}}.
\end{equation}
Equivalently, each indicator vector is divided by its Euclidean norm. This normalization puts the planted signal and stochastic error on the same scale as in the sparse spectral norm. We first record that both functionals are indeed norms; the elementary proofs are in Sections~\ref{sec_proof_lem_sparse_spectral_norm} and~\ref{sec_proof_lem_sparse_cut_norm}.

\begin{lemma}\label{lem_sparse_spectral_norm}
For every $k\ge2$, $\|\cdot\|_{\mathrm{sp},k}$ is a norm on the vector space of real symmetric $p\times p$ matrices.
\end{lemma}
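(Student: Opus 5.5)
We verify the three norm axioms directly from the variational definition \eqref{eq_sparse_spectral_norm}, $\|M\|_{\mathrm{sp},k}=\sup\{|u^\top Mu|:\|u\|_2=1,\|u\|_0\le k\}$. The supremum is over a compact set (a finite union of unit spheres of coordinate subspaces), so it is finite and attained. Homogeneity is immediate: $|u^\top(cM)u|=|c|\,|u^\top Mu|$ for each feasible $u$, so $\|cM\|_{\mathrm{sp},k}=|c|\,\|M\|_{\mathrm{sp},k}$. The triangle inequality follows because a supremum of the seminorms $M\mapsto|u^\top Mu|$ is subadditive: $|u^\top(M+N)u|\le|u^\top Mu|+|u^\top Nu|\le\|M\|_{\mathrm{sp},k}+\|N\|_{\mathrm{sp},k}$ for each feasible $u$, and we then take the supremum over $u$. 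Nonnegativity is clear.

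The only substantive step is definiteness: if $\|M\|_{\mathrm{sp},k}=0$ for a symmetric $M$, we must show $M=0$. This is where $k\ge2$ and symmetry are used. Taking $u=e_i$ (sparsity $1\le k$) gives $M_{ii}=0$ for every $i$. For $i\ne j$, take $u=(e_i+e_j)/\sqrt2$, which has sparsity $2\le k$. Then
\[
u^\top Mu=\tfrac12\bigl(M_{ii}+M_{jj}+M_{ij}+M_{ji}\bigr)=M_{ij},
\]
using symmetry and the vanishing diagonal. Hence $M_{ij}=0$ for all $i,j$, so $M=0$.

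As a side remark, the equality with $\max_{|T|\le k}\|M_{T,T}\|_{\mathrm{op}}$ in \eqref{eq_sparse_spectral_norm} follows because a $u$ supported in $T$ gives $u^\top Mu=u_T^\top M_{T,T}u_T$, and for symmetric $M_{T,T}$ the operator norm equals $\sup_{\|v\|_2=1}|v^\top M_{T,T}v|$. The norm property is not needed for this identity. I expect no real obstacle. The one point needing care is the polarization step in the definiteness argument, which explains both hypotheses. For $k=1$ the functional only sees the diagonal, and without symmetry the test vector $(e_i+e_j)/\sqrt2$ only detects $M_{ij}+M_{ji}$, so the functional is merely a seminorm.
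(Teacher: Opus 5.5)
Your proposal is correct and follows essentially the same route as the paper: homogeneity and the triangle inequality are read off directly from the variational definition, and definiteness uses $u=e_i$ to kill the diagonal and the two-sparse vector $u=(e_i+e_j)/\sqrt2$ with symmetry to kill the off-diagonal entries. Your side remarks on why $k\ge2$ and symmetry are needed are accurate extras that the paper does not state.
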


\begin{lemma}\label{lem_sparse_cut_norm}
For every $k\ge1$, $\|\cdot\|_{\square,k}$ is a norm on $\mathbb R^{p\times p}$.
\end{lemma}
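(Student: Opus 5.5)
The plan is to verify the three norm axioms directly from the definition \eqref{eq_sparse_cut_norm}. The functional is a finite maximum of expressions $|\mathbf 1_R^\top M\mathbf 1_C|/\sqrt{|R||C|}$. Each of these is the absolute value of a linear functional of $M$, scaled by a positive constant. This structure gives nonnegativity, absolute homogeneity, and the triangle inequality almost immediately; only definiteness needs a short argument.

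\emph{Seminorm properties.} For fixed $R,C$ with $1\le|R|,|C|\le k$, the map $M\mapsto \ell_{R,C}(M):=\mathbf 1_R^\top M\mathbf 1_C/\sqrt{|R||C|}$ is linear. Hence $|\ell_{R,C}|$ is a seminorm: it is nonnegative, satisfies $|\ell_{R,C}(\lambda M)|=|\lambda||\ell_{R,C}(M)|$, and satisfies $|\ell_{R,C}(M+N)|\le|\ell_{R,C}(M)|+|\ell_{R,C}(N)|$. A pointwise maximum over the finite nonempty index family (nonempty because $k\ge1$ and $p\ge1$) of seminorms is again a seminorm. For the triangle inequality, I bound each term by $\|M\|_{\square,k}+\|N\|_{\square,k}$ and then take the maximum over $(R,C)$.

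\emph{Definiteness.} This is the only step with content, and it is easy. Suppose $\|M\|_{\square,k}=0$. Take $R=\{i\}$ and $C=\{j\}$, which is admissible since $k\ge1$. Then $|\mathbf 1_{\{i\}}^\top M\mathbf 1_{\{j\}}|=|M_{ij}|=0$ for every $i,j\in[p]$, so $M=0$. Singleton sets suffice because the normalization factor equals $1$ for them.

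The main obstacle, if any, is bookkeeping. One must note that the admissible family contains singletons and that the maximum is taken over a finite set, so the maximum is attained and finite. Otherwise the proof is entirely elementary.
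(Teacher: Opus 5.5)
Your proposal is correct and follows the paper's proof: homogeneity and the triangle inequality come directly from the definition as a finite maximum of absolute values of linear functionals, and definiteness follows by taking singletons $R=\{i\}$, $C=\{j\}$, which give $|M_{ij}|=0$ for all $i,j$. Your additional remarks about the admissible family being nonempty and the maximum being attained only make explicit what the paper leaves implicit.
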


Under the null distribution $\mathbb Q_p^{\mathrm{PC}}$, let $A$ be the adjacency matrix of a graph sampled from the Erd\H{o}s--R\'enyi model $G(p,1/2)$, in which each edge is independently present with probability $1/2$.
Under the alternative distribution $\mathbb P_p^{\mathrm{PC}}$, first draw $S\subseteq[p]$ uniformly among all subsets of size $k$, set $A_{ij}=1$ for all distinct $i,j\in S$, and draw every remaining edge independently with probability $1/2$. The diagonal entries are zero under both distributions.

\begin{assumption}[Planted-clique hardness]\label{assump_planted_clique}
For the sequence $k=k_p$ under consideration, the clique size is known to the testing algorithm. No sequence of tests generated by a single uniform randomized polynomial-time algorithm, given the observed graph and $k_p$, strongly distinguishes $\mathbb P_p^{\mathrm{PC}}$ from $\mathbb Q_p^{\mathrm{PC}}$.
\end{assumption}

The polynomial-regime version takes $k=\lceil p^{1/2-\varepsilon}\rceil$ for an arbitrary fixed $\varepsilon\in(0,1/2)$. This hardness assumption and its implications for high-dimensional statistical problems have been studied extensively; see, among others, \cite{berthetRigolletComplexity2013}.

Consider the centered adjacency matrix
\begin{equation*}\label{eq_centered_adjacency}
\widehat\mu
=
A-\frac12(J-I),
\end{equation*}
where $J$ is the all-ones matrix and $I$ is the identity. Under $\mathbb Q_p^{\mathrm{PC}}$, the parameter is $\mu=0$. Under $\mathbb P_p^{\mathrm{PC}}$, conditional on $S$,
\begin{equation*}\label{eq_planted_clique_parameter}
\mu_S
=
\E(\widehat\mu\mid S)
=
\frac12\left(\mathbf 1_S\mathbf 1_S^\top-I_S\right),
\end{equation*}
where $I_S$ is diagonal with diagonal $\mathbf 1_S$.

The nonzero principal block of $\mu_S$ is $\frac12(J_k-I_k)$, whose eigenvalues are $(k-1)/2$ and $-1/2$. Hence
\begin{equation*}\label{eq_planted_clique_signal}
\|\mu_S\|_{\mathrm{sp},k}
=
\frac{k-1}{2}.
\end{equation*}
The normalized sparse cut norm has the same signal size. Indeed, taking $R=C=S$ gives the lower bound $(k-1)/2$, while normalized indicators are unit vectors and therefore
\[
\frac{|\mathbf 1_R^\top\mu_S\mathbf 1_C|}{\sqrt{|R||C|}}
\le
\|\mu_S\|_{\mathrm{op}}
=
\frac{k-1}{2}.
\]
Thus
\begin{equation}\label{eq_planted_clique_cut_signal}
\|\mu_S\|_{\square,k}
=
\frac{k-1}{2}.
\end{equation}
The estimation radius for both norms is controlled by the following concentration bounds; the proof, by standard net and union-bound arguments, is given in Section~\ref{sec_proof_thm_sparse_norm_concentration}.

\begin{theorem}[Concentration of the sparse spectral and cut norms]\label{thm_sparse_norm_concentration}
There exist universal constants $C,c>0$ such that, under $\mathbb Q_p^{\mathrm{PC}}$ and under $\mathbb P_p^{\mathrm{PC}}$ conditional on any planted set $S$,
\begin{equation*}
\mathbb P\left(
\max\left\{
\|\widehat\mu-\mu\|_{\mathrm{sp},k},
\|\widehat\mu-\mu\|_{\square,k}
\right\}
>
C\sqrt{k\log\left(\frac{ep}{k}\right)}
\right)
\le
2\exp\left(
-ck\log\left(\frac{ep}{k}\right)
\right).
\end{equation*}
Here $\mu=0$ under the null and $\mu=\mu_S$ under the alternative.
\end{theorem}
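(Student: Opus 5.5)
Under either law (conditionally on $S$ in the alternative), $E:=\widehat\mu-\mu$ is a symmetric matrix with zero diagonal. Its entries above the diagonal are independent, mean zero, and bounded by $1/2$; entries with both indices in $S$ are identically zero under $\mathbb P_p^{\mathrm{PC}}$. Each such entry is therefore sub-Gaussian with $\|E_{ij}\|_{\psi_2}\le C_0$ for a universal $C_0$. The plan is to bound both norms of $E$ through a single union bound over supports, combined with a net argument for the spectral norm.

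\emph{Cut norm.} Fix $R,C\subseteq[p]$ with $|R|,|C|\le k$. Write $\mathbf 1_R^\top E\mathbf 1_C=\sum_{i<j}E_{ij}(a_{ij})$, where $a_{ij}=\mathbf 1\{i\in R,j\in C\}+\mathbf 1\{j\in R,i\in C\}\in\{0,1,2\}$ and $\sum_{i<j} a_{ij}^2\le 4|R||C|$. Hoeffding's inequality then gives
\[
\mathbb P\bigl(|\mathbf 1_R^\top E\mathbf 1_C|>t\sqrt{|R||C|}\bigr)\le 2e^{-c_1t^2}.
\]
The number of pairs $(R,C)$ is at most $\bigl(\sum_{m\le k}\binom pm\bigr)^2\le (ep/k)^{2k}=\exp(2k\log(ep/k))$. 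Taking $t=C\sqrt{k\log(ep/k)}$ with $C$ large, so that $c_1C^2\ge 2+c$, the union bound yields failure probability at most $2\exp(-ck\log(ep/k))$.

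\emph{Sparse spectral norm.} By \eqref{eq_sparse_spectral_norm}, it suffices to control $\|E_{T,T}\|_{\mathrm{op}}$ over sets with $|T|=k$, since every smaller set is contained in one of size $k$ (assuming $k\le p$). For a fixed $T$, take a $1/4$-net $\mathcal N_T$ of the unit sphere in $\mathbb R^T$ with $|\mathcal N_T|\le 9^k$. The standard argument for symmetric matrices gives $\|E_{T,T}\|_{\mathrm{op}}\le 2\max_{u\in\mathcal N_T}|u^\top E u|$. For a fixed unit vector $u$, $u^\top Eu=2\sum_{i<j}u_iu_jE_{ij}$ has Hoeffding variance proxy $\lesssim \sum_{i<j} u_i^2u_j^2\le 1$, so $\mathbb P(|u^\top Eu|>t)\le 2e^{-c_2t^2}$. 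A union bound over at most $\binom pk 9^k\le \exp(k\log(ep/k)+k\log 9)$ pairs $(T,u)$ then works. Because $\log(ep/k)\ge 1$, the term $k\log 9$ is absorbed, and we again obtain the bound $C\sqrt{k\log(ep/k)}$ with failure probability $\exp(-ck\log(ep/k))$.

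\emph{Combining.} Adjust the constants so that each of the two events has probability at most $\exp(-ck\log(ep/k))$, then add the two bounds. This gives the stated factor $2$, with a common $C$ equal to the larger of the two constants. The bounds on $a_{ij}$ and on the variance proxy remain valid when entries of $E$ vanish (the planted block), so the argument is uniform over the null and over every $S$.

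\emph{Main obstacle.} The only delicate point is the combinatorial bookkeeping: the entropy of the supports, $\log\binom{p}{\le k}\lesssim k\log(ep/k)$, together with the net cardinality $9^k$, must be dominated by the sub-Gaussian exponent $t^2$ at the threshold $t\asymp\sqrt{k\log(ep/k)}$, while still leaving the residual $ck\log(ep/k)$. I expect this to go through once $C$ is chosen large.
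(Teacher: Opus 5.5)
Your proposal is correct and follows essentially the same route as the paper: Hoeffding for each fixed normalized rectangle and each fixed net vector, a $1/4$-net of size $9^k$ with $\|E_{T,T}\|_{\mathrm{op}}\le 2\max_{u\in\mathcal N_T}|u^\top Eu|$, and a union bound over supports whose entropy is $O(k\log(ep/k))$. The differences are cosmetic. You restrict to $|T|=k$ using monotonicity of principal-submatrix norms. You also absorb the Hoeffding factor $2$ via $k\log(ep/k)\ge 1$, so that the two events combine to exactly the stated $2\exp(-ck\log(ep/k))$, a step the paper leaves implicit.
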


With $C$ chosen as in Theorem \ref{thm_sparse_norm_concentration}, define
\begin{equation}\label{eq_planted_clique_estimation_radius}
r_p
=
C\sqrt{k\log\left(\frac{ep}{k}\right)},
\qquad
s_p
=
\frac{k-1}{2}.
\end{equation}
For both norms,
\begin{equation}\label{eq_planted_clique_reverse_gap}
\frac{s_p}{r_p}
\asymp
\sqrt{\frac{k}{\log(ep/k)}}.
\end{equation}

\subsection{Computational lower bounds}\label{subsec:sparse_lower_bounds}

Combining the reverse gap \eqref{eq_planted_clique_reverse_gap} with the reduction of Section~\ref{sec:framework} yields conditional lower bounds for both norms; the proofs are in Sections~\ref{sec_proof_thm_sparse_spectral_planted_clique} and~\ref{sec_proof_thm_sparse_cut_planted_clique}.
\begin{theorem}[Conditional sparse spectral norm lower bound]\label{thm_sparse_spectral_planted_clique}
Suppose Assumption~\ref{assump_planted_clique} holds and $k/\log p\to\infty$. Let $(f_{p,k})_{p\ge1}$ be generated by a single uniform randomized polynomial-time algorithm such that, for every real symmetric $p\times p$ matrix $M$,
\begin{equation}\label{eq_sparse_spectral_approximation}
\rho_{f,p}^{-1}\|M\|_{\mathrm{sp},k}
\le
f_{p,k}(M)
\le
\zeta_{f,p}\|M\|_{\mathrm{sp},k}
\end{equation}
with probability at least $2/3$, where $\rho_{f,p},\zeta_{f,p}\ge1$ and $\gamma_{f,p}=\rho_{f,p}\zeta_{f,p}$. Define $\tau_{p,k}=s_p/(2\rho_{f,p})$. Assume the factors are efficiently computable and $\tau_{p,k}$ is uniformly polynomial-time computable from $(p,k)$. Then there is a universal constant $c>0$ such that
\begin{equation}\label{eq_sparse_spectral_lower_bound}
\limsup_{p\to\infty}
\gamma_{f,p}
\sqrt{\frac{\log(ep/k)}{k}}
\ge
c.
\end{equation}
\end{theorem}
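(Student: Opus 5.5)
The plan is to apply Theorem~\ref{thm_framework} to the planted-clique family, taking $\mathcal F_p$ to be the real symmetric $p\times p$ matrices with the norm $\|\cdot\|_{\mathrm{sp},k}$ (Lemma~\ref{lem_sparse_spectral_norm}). I take $\mu=0$ under $\mathbb Q_p^{\mathrm{PC}}$ and $\mu=\mu_S$ under $\mathbb P_p^{\mathrm{PC}}$, the estimator $\widehat\mu=A-\frac12(J-I)$, and $s_p,r_p$ from \eqref{eq_planted_clique_estimation_radius}. The remaining hypotheses of Theorem~\ref{thm_framework} are then checked one by one.

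\textbf{Separation.} This is exact: $\|\mu_S\|_{\mathrm{sp},k}=(k-1)/2=s_p$ for every realization of $S$.

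\textbf{Estimation radius.} Theorem~\ref{thm_sparse_norm_concentration} bounds the failure probability by $2\exp(-ck\log(ep/k))$, both under the null and conditionally on every $S$. Averaging over $S$, this tends to $0$ because $k\to\infty$ (since $k/\log p\to\infty$). So \eqref{eq_estimation_radius} holds.

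\textbf{Hardness.} Let $\mathcal A$ be the class of test sequences generated by a single uniform randomized polynomial-time algorithm given $(A,k)$. Assumption~\ref{assump_planted_clique} then gives \eqref{eq_no_strong_distinguishability}.

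\textbf{Threshold closure.} The test $\phi_p=I_{\{f_{p,k}(\widehat\mu)>\tau_{p,k}\}}$ is computable in randomized polynomial time, since:
\begin{itemize}[leftmargin=1.8em]
\item forming $\widehat\mu$ from $A$ is polynomial;
\item $f_{p,k}$ is run once;
\item $\tau_{p,k}$ is uniformly computable by hypothesis.
\end{itemize}

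\textbf{Main obstacle: $f_{p,k}$ is only correct with probability $2/3$.} Theorem~\ref{thm_framework} assumes the guarantee holds deterministically, so the contradiction must be redone directly. Suppose $\gamma_{f,p}<s_p/(4r_p)$ for all large $p$. On the event that the approximation guarantee holds (conditional probability $\ge2/3$ given the input, by independence of the algorithm's internal coins) and the concentration event holds, the calculation in the proof of Theorem~\ref{thm_framework} gives $\phi_p=0$ under the null and $\phi_p=1$ under the alternative. Hence
\[
\limsup_p\bigl[\mathbb Q_p(\phi_p=1)+\mathbb P_p(\phi_p=0)\bigr]\le 2/3 ,
\]
which is not yet strong distinguishability.

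I would fix this first by amplification. Run $f_{p,k}$ independently $m_p\to\infty$ times on $\widehat\mu$, say $m_p=\lceil\log p\rceil$ (any slowly growing polynomial count keeps the procedure polynomial), and take a majority vote of the threshold indicators. For a fixed input, each run's indicator equals the correct answer with probability $\ge 2/3$ independently across runs. By Hoeffding, the majority is wrong with probability $\le e^{-m_p/18}\to0$ on the concentration event. The majority-vote test lies in $\mathcal A$ and strongly distinguishes, contradicting Assumption~\ref{assump_planted_clique}.

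An alternative to majority vote is the median of the $m_p$ outputs: it satisfies both approximation inequalities with high probability. I would double-check that this is legitimate because each inequality individually holds with probability $\ge 2/3$ per run.

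\textbf{Conclusion.} The contradiction gives $\limsup\gamma_{f,p}r_p/s_p\ge1/4$. Then \eqref{eq_planted_clique_reverse_gap}, $s_p/r_p\asymp\sqrt{k/\log(ep/k)}$ (using $k-1\asymp k$), converts this into \eqref{eq_sparse_spectral_lower_bound} with $c=1/(4C')$ for a universal $C'$. The lower- and upper-certificate cases follow by setting $\zeta_{f,p}=1$ or $\rho_{f,p}=1$, as in Theorem~\ref{thm_framework}.
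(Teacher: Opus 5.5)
Your proposal is correct and takes essentially the same route as the paper. The paper also amplifies the randomized approximator, taking the median of $C\log p$ independent runs; your majority vote of threshold indicators works equally well. It then replays the null and alternative inequalities from the proof of Theorem~\ref{thm_framework} on the concentration event of Lemma~\ref{lem_sparse_spectral_concentration} to contradict Assumption~\ref{assump_planted_clique}, and concludes via \eqref{eq_planted_clique_reverse_gap} exactly as you do.
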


The same argument applies to the normalized sparse cut norm.

\begin{theorem}[Conditional normalized sparse cut-norm lower bound]\label{thm_sparse_cut_planted_clique}
Suppose Assumption~\ref{assump_planted_clique} holds and $k/\log p\to\infty$. Let $(g_{p,k})_{p\ge1}$ be generated by a single uniform randomized polynomial-time algorithm such that, for every real $p\times p$ matrix $M$,
\begin{equation*}\label{eq_sparse_cut_approximation}
\rho_{g,p}^{-1}\|M\|_{\square,k}
\le
g_{p,k}(M)
\le
\zeta_{g,p}\|M\|_{\square,k}
\end{equation*}
with probability at least $2/3$, where $\rho_{g,p},\zeta_{g,p}\ge1$ and $\gamma_{g,p}=\rho_{g,p}\zeta_{g,p}$. Define $\tau_{p,k}^{\square}=s_p/(2\rho_{g,p})$. Assume the factors are efficiently computable and $\tau_{p,k}^{\square}$ is uniformly polynomial-time computable from $(p,k)$. Then there is a universal constant $c>0$ such that
\begin{equation}\label{eq_sparse_cut_lower_bound}
\limsup_{p\to\infty}
\gamma_{g,p}
\sqrt{\frac{\log(ep/k)}{k}}
\ge
c.
\end{equation}
\end{theorem}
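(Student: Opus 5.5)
We follow the proof pattern of Theorem~\ref{thm_framework}, instantiated with the planted-clique reverse gap \eqref{eq_planted_clique_reverse_gap} for the norm $\|\cdot\|_{\square,k}$. The one new ingredient is that $g_{p,k}$ is randomized and valid only with probability $2/3$. Theorem~\ref{thm_framework} assumes deterministic guarantees, so rather than invoking it as a black box we redo its short argument.

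\textbf{Setup.} Take the parameter space $\mathcal F_p=\mathbb R^{p\times p}$ with the norm of Lemma~\ref{lem_sparse_cut_norm}. Take $\mu=0$ under $\mathbb Q_p^{\mathrm{PC}}$ and $\mu=\mu_S$ under $\mathbb P_p^{\mathrm{PC}}$, which gives $\|\mu_S\|_{\square,k}=s_p=(k-1)/2$ by \eqref{eq_planted_clique_cut_signal}. The estimator is the centered adjacency matrix $\widehat\mu$, and the radius is $r_p=C\sqrt{k\log(ep/k)}$.

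\textbf{Estimation radius.} By Theorem~\ref{thm_sparse_norm_concentration}, averaging over $S$ under the alternative, the events $\|\widehat\mu\|_{\square,k}\le r_p$ (null) and $\|\widehat\mu-\mu_S\|_{\square,k}\le r_p$ (alternative) each have probability at least $1-2\exp(-ck\log(ep/k))\to1$. This uses $k\to\infty$, which follows from $k/\log p\to\infty$. So \eqref{eq_estimation_radius} holds.

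\textbf{Amplification.} A single run of $g_{p,k}$ fails with probability up to $1/3$, which blocks strong distinction. We repeat $g_{p,k}$ independently $m_p$ times on the same input $\widehat\mu$ and take the median, with $m_p=\lceil\log p\rceil$ (any $m_p\to\infty$ of polynomial size works). By Hoeffding's inequality, more than half of the runs satisfy the two-sided bound except with probability $e^{-cm_p}\to0$. The two-sided bound defines an interval, so whenever a majority of runs lie in it, the median lies in it as well. Hence the median $\bar g_{p,k}$ satisfies the deterministic sandwich, with the same factors $\rho_{g,p},\zeta_{g,p}$, with probability $1-o(1)$ for every fixed input.

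The test is $\phi_p=I_{\{\bar g_{p,k}(\widehat\mu)>\tau^{\square}_{p,k}\}}$, where $\tau^{\square}_{p,k}=s_p/(2\rho_{g,p})$.
\begin{itemize}
\item Computing $\widehat\mu$ from $A$ is polynomial time.
\item The $m_p$ calls to $g_{p,k}$ are polynomial time, and $k$ is supplied to the tester by Assumption~\ref{assump_planted_clique}.
\item $\tau^{\square}_{p,k}$ is uniformly polynomial-time computable by hypothesis.
\end{itemize}
So $\phi_p$ comes from a single uniform randomized polynomial-time algorithm, which is the threshold-closure property \eqref{eq_closure} for the class $\mathcal A$ of Assumption~\ref{assump_planted_clique}.

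\textbf{Contradiction.} Suppose $\gamma_{g,p}<s_p/(4r_p)$ for all large $p$. We intersect the concentration event with the amplification event and argue as in the proof of Theorem~\ref{thm_framework}.
\begin{itemize}
\item Under the null, $\bar g_{p,k}(\widehat\mu)\le\zeta_{g,p}r_p<s_p/(4\rho_{g,p})<\tau^{\square}_{p,k}$.
\item Under the alternative, $\gamma_{g,p}\ge1$ forces $r_p<s_p/4$. Then $\bar g_{p,k}(\widehat\mu)\ge\rho_{g,p}^{-1}(s_p-r_p)>3s_p/(4\rho_{g,p})>\tau^{\square}_{p,k}$.
\end{itemize}
Both error probabilities therefore tend to zero, contradicting Assumption~\ref{assump_planted_clique}. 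Hence $\limsup_{p\to\infty}\gamma_{g,p}r_p/s_p\ge1/4$.

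Substituting $s_p/r_p\asymp\sqrt{k/\log(ep/k)}$ from \eqref{eq_planted_clique_reverse_gap}, and using $(k-1)/2\ge k/4$ for $k\ge2$, gives \eqref{eq_sparse_cut_lower_bound} with a universal constant, e.g.\ $c=1/(16C)$.

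The main obstacle is the randomization. There are two points to handle. First, the approximator's internal coins must be independent of the data, which holds because they are fresh coins. Second, the success-probability bookkeeping must be done per input, with probability $1-o(1)$ for each fixed $\widehat\mu$, so that it composes with the data-level concentration event by conditioning on $A$. Median amplification resolves both, and everything else is identical to the proof of Theorem~\ref{thm_sparse_spectral_planted_clique}.
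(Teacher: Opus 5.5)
Your proposal is correct and follows essentially the same route as the paper's proof. Like the paper, you amplify by taking the median of $O(\log p)$ independent runs to get per-input success probability $1-o(1)$, then redo the null and alternative inequalities of Theorem~\ref{thm_framework} on the intersection with the concentration event of Lemma~\ref{lem_sparse_cut_concentration}, obtaining $\limsup_{p}\gamma_{g,p}r_p/s_p\ge 1/4$ and hence the rate; your explicit constant $c=1/(16C)$ and your check that the median lands in the interval are small details the paper leaves implicit.
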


For $k=\lceil p^{1/2-\varepsilon}\rceil$ with fixed $\varepsilon\in(0,1/2)$, the two theorems rule out distortion
\[
o\left(
\frac{p^{1/4-\varepsilon/2}}{\sqrt{\log p}}
\right)
\]
for the corresponding uniform randomized polynomial-time approximator families.

\begin{remark}[Near-optimality of the lower bounds]\label{remark:sparse_upper_gap}
    There are deterministic polynomial-time lower certificates satisfying
    \begin{equation*}\label{eq_sparse_spectral_upper_summary}
    \frac1{\sqrt{k}}\|M\|_{\mathrm{sp},k}
    \le
    f_{p,k}^{\mathrm{sp}}(M)
    \le
    \|M\|_{\mathrm{sp},k}
    \end{equation*}
    for every symmetric $M$, and
    \begin{equation*}\label{eq_sparse_cut_upper_summary}
    \frac1{\sqrt{k}}\|M\|_{\square,k}
    \le
    f_{p,k}^{\square}(M)
    \le
    \|M\|_{\square,k}
    \end{equation*}
    for every real $M$. Propositions~\ref{prop_sparse_spectral_upper_bound} and~\ref{prop_sparse_cut_upper_bound} give the constructions and proofs.
    
    Consequently, in every planted-clique-hard polynomial regime $k=p^\alpha$ with $0<\alpha<1/2$, the conditional lower scale for each norm is $\sqrt{k/\log(ep/k)}$, while a deterministic algorithm achieves distortion at most $\sqrt{k}$. The lower and upper asymptotic scales differ only by a factor of order $\sqrt{\log p}$.
\end{remark}

\subsection{Connection with sparse PCA, restricted isometry, cut norms, and existing hardness results}\label{subsec_sparse_literature}

The optimization problem in \eqref{eq_sparse_spectral_norm} is closely related to sparse principal component analysis. For a positive semidefinite matrix $M\succeq0$, the canonical objective
\[
\operatorname{SPCA}_k(M)
:=
\max_{\|u\|_2=1,\,\|u\|_0\le k}
u^\top Mu
\]
equals $\|M\|_{\mathrm{sp},k}$.

\begin{table}[ht]
\centering
\small
\renewcommand{\arraystretch}{1.25}
\resizebox{\textwidth}{!}{
\begin{tabular}{p{3.2cm}p{5.0cm}p{10.0cm}}
\hline
Result
&
Objective and input class
&
Approximation lower bound
\\
\hline
\cite{magdonIsmailNPHardness2017}
&
$\operatorname{SPCA}_k(M)$ for symmetric inputs
&
Rules out every fixed-factor approximation under hidden-clique hardness, but gives no explicit diverging rate in $k$.
\\
\hline
\cite{chanApproximabilitySparsePCA2016}
&
$\operatorname{SPCA}_k(M)$ for positive semidefinite inputs
&
Rules out every fixed-factor approximation under the Small-Set Expansion hypothesis, but gives no explicit diverging rate in $k$.
\\
\hline
\cite{koiranHiddenCliquesCertification2014}
&
$\delta_k(X)=\|X^\top X-I\|_{\mathrm{sp},k}$
&
Under their hidden-clique hypothesis, the stated theorems establish hardness for distinguishing certain polynomially separated RIP parameters. By revisiting and optimizing Propositions~4 and~5, we show that their construction rules out distortion $o(\sqrt{k/\log(ep/k)})$.
\\
\hline
Theorem~\ref{thm_sparse_spectral_planted_clique}
&
$\|M\|_{\mathrm{sp},k}$ for arbitrary symmetric inputs
&
Under randomized planted-clique detection hardness, rules out distortion $o(\sqrt{k/\log(ep/k)})$ for randomized global approximators satisfying the uniformity, factor-computability, and threshold-closure conditions.
\\
\hline
\end{tabular}
}
\caption{Comparison with existing hardness results for sparse PCA, restricted isometry certification, and sparse spectral norm approximation.}
\label{tab:sparse_pca_hardness_comparison}
\end{table}

For sparse PCA, \cite{magdonIsmailNPHardness2017} proved NP-hardness of exact computation and fixed-factor hardness under a hidden-clique assumption, while \cite{chanApproximabilitySparsePCA2016} proved fixed-factor hardness for positive semidefinite inputs under the Small-Set Expansion hypothesis. Neither result gives an explicit diverging approximation factor as a function of $k$.

A more direct connection comes from the restricted isometry constant
\[
\delta_k(X)
:=
\max_{\substack{\|u\|_2=1\\\|u\|_0\le k}}
\left|\|Xu\|_2^2-1\right|
=
\|X^\top X-I\|_{\mathrm{sp},k}.
\]
Thus, approximating the restricted isometry constant is equivalent to approximating the sparse spectral norm on the restricted class $M=X^\top X-I$. The closest predecessor to Theorem~\ref{thm_sparse_spectral_planted_clique} is therefore the Cholesky reduction of \cite{koiranHiddenCliquesCertification2014}.
Their stated theorems establish hardness for distinguishing certain polynomially separated RIP parameters. By optimizing the parameter choices in their Propositions~4 and~5, we show
\[
\limsup_{p\to\infty}
\gamma_{f,p}
\sqrt{\frac{\log(ep/k)}{k}}
>
0.
\]
The complete derivation is in Section~\ref{sec_kz_quantitative_refinement}. Subsequent RIP hardness results strengthen the theory in complementary bicriteria, worst-case, and average-case directions \citep{natarajanComputationalComplexityCertifying2014,weedApproximatelyCertifyingRestricted2018,wangAverageCaseRIP2016,dingAverageCaseTime2021}, but do not directly provide a stronger non-bicriteria multiplicative lower bound at the same sparsity order.

The normalized sparse cut norm \eqref{eq_sparse_cut_norm} is a localized bilinear analogue of the sparse spectral norm. Unlike the ordinary matrix cut norm, it restricts both sides to at most $k$ coordinates and normalizes by the geometric mean of the two support sizes. The unrestricted matrix cut norm admits a constant-factor polynomial-time approximation \citep{alon2006approximating}, but that result does not furnish a constant-factor approximation for the cardinality-constrained normalized functional. Theorem~\ref{thm_sparse_cut_planted_clique} shows that, under planted-clique hardness, its worst-case distortion must diverge with $k$.

\section{Tensor Cut Norm}\label{sec:tensor_cut}

We now turn to a different tensor geometry. Instead of optimizing over Euclidean unit vectors, the tensor cut norm optimizes over independent mode-wise subsets.

\subsection{The reverse detection--estimation gap}\label{subsec:tensor_cut_reverse_gap}

For fixed $d\ge2$ and $T\in\mathbb R^{p\times\cdots\times p}$, define the normalized order-$d$ tensor cut norm by
\begin{equation}\label{eq_tensor_cut_norm}
\|T\|_{\square,d}
:=
p^{-d/2}
\max_{I_1,\ldots,I_d\subseteq[p]}
\left|
\sum_{i_1\in I_1,\ldots,i_d\in I_d}
T_{i_1,\ldots,i_d}
\right|.
\end{equation}
The normalization does not affect multiplicative distortion.

\begin{lemma}\label{lem_tensor_cut_norm}
The functional \eqref{eq_tensor_cut_norm} is a norm on $\mathbb R^{p\times\cdots\times p}$.
\end{lemma}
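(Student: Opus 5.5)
We verify the three norm axioms for $\|\cdot\|_{\square,d}$ directly from \eqref{eq_tensor_cut_norm}, writing $\Sigma_{I}(T):=\sum_{i_1\in I_1,\ldots,i_d\in I_d}T_{i_1,\ldots,i_d}$ for a tuple $I=(I_1,\ldots,I_d)$ of subsets of $[p]$. Each $\Sigma_I$ is a linear functional of $T$, and there are finitely many tuples, so the maximum exists and is finite.

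\emph{Absolute homogeneity:} $|\Sigma_I(\lambda T)|=|\lambda|\,|\Sigma_I(T)|$ for every tuple, and taking the maximum gives $\|\lambda T\|_{\square,d}=|\lambda|\,\|T\|_{\square,d}$. \emph{Triangle inequality:} for each tuple, $|\Sigma_I(T+T')|\le|\Sigma_I(T)|+|\Sigma_I(T')|\le p^{d/2}(\|T\|_{\square,d}+\|T'\|_{\square,d})$, and we maximize over $I$. Nonnegativity is immediate. Thus $\|\cdot\|_{\square,d}$ is a seminorm, being $p^{-d/2}$ times a maximum of absolute values of linear functionals.

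\emph{Definiteness:} this is the only step with content. Suppose $\|T\|_{\square,d}=0$, so $\Sigma_I(T)=0$ for every tuple $I$. Taking singletons $I_j=\{i_j\}$ gives $\Sigma_I(T)=T_{i_1,\ldots,i_d}$, hence every entry of $T$ vanishes and $T=0$.

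No genuine obstacle is expected. Allowing the empty set in the maximum is harmless, since it contributes $0$, and singletons are always admissible. This contrasts with Lemma~\ref{lem_sparse_spectral_norm}, where symmetry and $k\ge2$ matter; here the subsets are unrestricted and mode-wise independent, so point evaluations are directly available.
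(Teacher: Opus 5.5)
Your proposal is correct and follows the paper's proof essentially verbatim: homogeneity and the triangle inequality follow because the functional is $p^{-d/2}$ times a maximum of absolute values of linear functionals. Definiteness then comes from taking singleton rectangles $I_j=\{i_j\}$ to recover each entry $T_{i_1,\ldots,i_d}$.
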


The following calculation is a direct asymmetric specialization of the standard low-degree analysis for the additive Gaussian spiked-tensor model; see \cite{kunisky2019notes}. We include it to record the precise normalization and degree range needed below.

Consider the asymmetric spiked-tensor model
\begin{equation}\label{eq_tensor_cut_model}
Y=Z
\quad\text{under }\mathbb Q_p^{\mathrm{TC}},
\qquad
Y=\lambda_p\,u^{(1)}\otimes\cdots\otimes u^{(d)}+Z
\quad\text{under }\mathbb P_p^{\mathrm{TC}},
\end{equation}
where $Z$ has i.i.d.\ standard Gaussian entries and the $u^{(j)}$ are independent and uniform on $\{\pm p^{-1/2}\}^p$. We take $\widehat\mu=Y$, with parameter $\mu=0$ under the null and
\[
\mu
=
\lambda_p\,u^{(1)}\otimes\cdots\otimes u^{(d)}
\]
under the alternative.

\begin{lemma}[Signal separation]\label{lem_tensor_cut_signal}
For every realization of the spike,
\begin{equation*}\label{eq_tensor_cut_signal}
\|\mu\|_{\square,d}
\ge
2^{-d}\lambda_p.
\end{equation*}
\end{lemma}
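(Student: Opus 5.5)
Since $\mu=\lambda_p\,u^{(1)}\otimes\cdots\otimes u^{(d)}$ is rank one, the cut sum over a box $I_1\times\cdots\times I_d$ factorizes:
\[
\sum_{i_1\in I_1,\ldots,i_d\in I_d}\mu_{i_1,\ldots,i_d}
=
\lambda_p\prod_{j=1}^d\Bigl(\sum_{i\in I_j}u^{(j)}_i\Bigr).
\]
The plan is therefore to exhibit one explicit choice of $I_1,\ldots,I_d$ for which every factor is large in absolute value, and then plug it into the definition \eqref{eq_tensor_cut_norm}.

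For each mode $j$, let $P_j=\{i:u^{(j)}_i=+p^{-1/2}\}$ and $N_j=[p]\setminus P_j$. One of the two sets has cardinality at least $p/2$; call it $I_j$. Since all entries of $u^{(j)}$ on $I_j$ have the same sign and modulus $p^{-1/2}$, we get
\[
\Bigl|\sum_{i\in I_j}u^{(j)}_i\Bigr|=|I_j|\,p^{-1/2}\ge \tfrac12 p^{1/2}.
\]
Multiplying over the $d$ modes gives a box sum of absolute value at least $\lambda_p 2^{-d}p^{d/2}$. The normalization $p^{-d/2}$ in \eqref{eq_tensor_cut_norm} cancels the factor $p^{d/2}$, yielding $\|\mu\|_{\square,d}\ge 2^{-d}\lambda_p$.

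This argument is deterministic in the realization of the spike and uses only that each coordinate of $u^{(j)}$ is $\pm p^{-1/2}$, so it holds for every realization, as claimed. There is no real obstacle here. The only point needing care is the pigeonhole step $\max(|P_j|,|N_j|)\ge p/2$, which also covers the degenerate case in which one of the two sets is empty.
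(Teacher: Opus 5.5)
Your proposal is correct and follows essentially the same argument as the paper. You factorize the rank-one box sum, take for each mode the majority-sign index set (of size at least $p/2$) to get $\bigl|\sum_{i\in I_j}u^{(j)}_i\bigr|\ge\sqrt p/2$, and let the $p^{-d/2}$ normalization cancel to give $2^{-d}\lambda_p$.
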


\begin{lemma}[Noise radius]\label{lem_tensor_cut_noise}
For every fixed $d\ge2$, there is a constant $C_d>0$ such that
\begin{equation*}\label{eq_tensor_cut_noise}
\mathbb Q_p^{\mathrm{TC}}\left(
\|Z\|_{\square,d}>C_d\sqrt p
\right)
\longrightarrow0.
\end{equation*}
The same convergence holds under $\mathbb P_p^{\mathrm{TC}}$ with $Z$ replaced by $Y-\mu$.
\end{lemma}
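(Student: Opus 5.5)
We will prove the bound by a union bound over all choices of the $d$ mode-wise subsets. For fixed $I_1,\ldots,I_d\subseteq[p]$, the quantity
\[
X_{I_1,\ldots,I_d}:=\sum_{i_1\in I_1,\ldots,i_d\in I_d}Z_{i_1,\ldots,i_d}
\]
is a centered Gaussian with variance $|I_1|\cdots|I_d|\le p^d$, since the entries of $Z$ are i.i.d.\ standard Gaussian. So for every $t>0$,
\[
\mathbb Q_p^{\mathrm{TC}}\bigl(|X_{I_1,\ldots,I_d}|>t\bigr)\le 2\exp\!\left(-\frac{t^2}{2p^d}\right).
\]
There are $2^{dp}$ choices of the tuple $(I_1,\ldots,I_d)$, and empty sets contribute zero. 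Take $t=C_d\,p^{d/2}\sqrt p$, which corresponds to $\|Z\|_{\square,d}>C_d\sqrt p$ after the normalization $p^{-d/2}$ in \eqref{eq_tensor_cut_norm}. The union bound then gives
\[
\mathbb Q_p^{\mathrm{TC}}\bigl(\|Z\|_{\square,d}>C_d\sqrt p\bigr)\le 2\cdot 2^{dp}\exp\!\left(-\frac{C_d^2p}{2}\right)=2\exp\!\left(p\Bigl(d\log 2-\frac{C_d^2}{2}\Bigr)\right).
\]
Any $C_d$ with $C_d^2>2d\log 2$, for instance $C_d=2\sqrt{d}$, makes this decay exponentially in $p$. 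In particular it tends to zero.

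Under $\mathbb P_p^{\mathrm{TC}}$, model \eqref{eq_tensor_cut_model} gives $Y-\mu=Z$. Here $Z$ is independent of the spike and has the same law as under the null. So the event $\{\|Y-\mu\|_{\square,d}>C_d\sqrt p\}$ has the same probability as the null event, conditionally on every spike realization and hence unconditionally. The same bound applies.

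There is no real obstacle. The only point to check is that the cardinality of the index family, $2^{dp}$, is only exponential in $p$. Then the Gaussian tail at scale $\sqrt p$ in the normalized norm, i.e.\ $t^2/(2p^d)\asymp p$, absorbs the entropy term with a constant depending only on $d$. No chaining or net argument is needed, because the supremum is over a finite set.
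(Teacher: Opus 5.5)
Your proposal is correct and follows essentially the same route as the paper: a Gaussian tail bound for each fixed rectangle sum (variance at most $p^d$ before normalization, i.e.\ at most $1$ after), a union bound over the at most $2^{dp}$ tuples $(I_1,\ldots,I_d)$, and the choice $C_d^2>2d\log 2$. Your observation that $Y-\mu=Z$ is independent of the spike, so the same bound holds under $\mathbb P_p^{\mathrm{TC}}$, is also exactly what the second claim requires.
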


Let $L_p^{\mathrm{TC}}=d\mathbb P_p^{\mathrm{TC}}/d\mathbb Q_p^{\mathrm{TC}}$. The next calculation gives the low-degree part of the reverse gap.

\begin{proposition}[Low-degree likelihood-ratio bound]\label{prop_tensor_cut_ldlr}
Let
\[
R
=
\frac1p\sum_{i=1}^p\xi_i,
\]
where the $\xi_i$ are independent Rademacher variables. For every nonnegative integer $D$,
\begin{equation}\label{eq_tensor_cut_ldlr_exact}
\left\|(L_p^{\mathrm{TC}})_{\le D}\right\|_{L^2(\mathbb Q_p^{\mathrm{TC}})}^2
=
\sum_{m=0}^D
\frac{\lambda_p^{2m}}{m!}
\left(\E R^m\right)^d.
\end{equation}
Moreover, for every $m\ge1$, the summand indexed by $m$ is bounded in absolute value by
\begin{equation}\label{eq_tensor_cut_ldlr_term}
\left[
\frac{C_d\lambda_p^2m^{d/2-1}}{p^{d/2}}
\right]^m.
\end{equation}
Consequently, if
\begin{equation}\label{eq_tensor_cut_lambda}
\lambda_p
=
\frac{p^{d/4}}{(\log p)^B},
\qquad
B>\frac{d-2}{2},
\end{equation}
then, for every fixed $c>0$,
\begin{equation}\label{eq_tensor_cut_ldlr_bounded}
\left\|(L_p^{\mathrm{TC}})_{\le\lfloor c(\log p)^2\rfloor}\right\|_{L^2(\mathbb Q_p^{\mathrm{TC}})}
=
O(1).
\end{equation}
\end{proposition}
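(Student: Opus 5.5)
We will use the standard Gaussian additive-model computation of the low-degree likelihood ratio.

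\textbf{Step 1: the exact formula.} Write $X=u^{(1)}\otimes\cdots\otimes u^{(d)}$, so that $\mathbb P_p^{\mathrm{TC}}$ is a mixture of $\mathcal N(\lambda_pX,I)$ over the prior on $X$. For additive Gaussian noise, expand $L_p^{\mathrm{TC}}$ in the orthonormal Hermite basis $\{h_\alpha\}$ of $L^2(\mathbb Q_p^{\mathrm{TC}})$. The coefficient of $h_\alpha$ is $\E[(\lambda_pX)^\alpha]/\sqrt{\alpha!}$. Projecting onto degrees $|\alpha|\le D$ and summing via the multinomial theorem gives
\[
\|(L_p^{\mathrm{TC}})_{\le D}\|^2=\sum_{m=0}^D\frac{\lambda_p^{2m}}{m!}\E\langle X,X'\rangle^m,
\]
where $X'$ is an independent copy of $X$. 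Next, $\langle X,X'\rangle=\prod_{j=1}^d\langle u^{(j)},u'^{(j)}\rangle$. Each factor $\langle u^{(j)},u'^{(j)}\rangle=p^{-1}\sum_i\xi_i$ has the law of $R$, since the products of independent uniform signs are i.i.d.\ Rademacher. The $d$ modes are independent, so $\E\langle X,X'\rangle^m=(\E R^m)^d$, which is \eqref{eq_tensor_cut_ldlr_exact}.

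\textbf{Step 2: the termwise bound.} Odd moments vanish. The variable $pR$ is a sum of $p$ Rademachers, hence $1$-sub-Gaussian, so $|\E R^m|\le p^{-m}\E|\sum\xi_i|^m\le (Cm/p)^{m/2}$. Therefore $(\E R^m)^d\le (Cm/p)^{md/2}$. Combining this with $m!\ge (m/e)^m$, the $m$th summand is at most
\[
\Bigl[e\lambda_p^2\,C^{d/2}m^{d/2}p^{-d/2}/m\Bigr]^m,
\]
which is \eqref{eq_tensor_cut_ldlr_term} after adjusting $C_d$.

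\textbf{Step 3: boundedness.} With $\lambda_p$ as in \eqref{eq_tensor_cut_lambda}, the bracket in \eqref{eq_tensor_cut_ldlr_term} equals $C_d m^{d/2-1}(\log p)^{-2B}$. For $m\le D=\lfloor c(\log p)^2\rfloor$ we have $m^{d/2-1}\le c^{d/2-1}(\log p)^{d-2}$, so the bracket is at most $C_d c^{d/2-1}(\log p)^{d-2-2B}$. This tends to $0$ because $2B>d-2$. The bracket is therefore at most $1/2$ for all large $p$, uniformly in $1\le m\le D$, and the sum is at most $1+\sum_{m\ge1}2^{-m}=2$. When $d=2$ the exponent $d/2-1$ is zero, and the bound $(C_d(\log p)^{-2B})^m$ holds trivially since $B>0$.

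\textbf{Main obstacle.} The only delicate point is the uniformity of Step 3 over the growing degree range. The factor $m^{d/2-1}$ grows with $m$, and this is exactly why the condition $B>(d-2)/2$ is needed and why $D$ can be of order $(\log p)^2$ but no larger polynomially. The Hermite computation in Step 1 is routine, but care is needed to verify that the Hermite coefficient normalization produces exactly $\lambda_p^{2m}/m!$ with no extra factors.
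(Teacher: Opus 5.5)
Your proposal is correct and follows the paper's proof essentially step for step: the Hermite expansion to $\sum_{m\le D}\lambda_p^{2m}\E\langle X,X'\rangle^m/m!$, factorization of the overlap into $d$ independent copies of $R$, the sub-Gaussian moment bound combined with $m!\ge(m/e)^m$, and a geometric sum that uses $2B>d-2$ uniformly over $m\le c(\log p)^2$. One small slip: $pR=\sum_i\xi_i$ is $\sqrt p$-sub-Gaussian (equivalently, $\sqrt p\,R$ is $1$-sub-Gaussian), not $1$-sub-Gaussian, although the bound $|\E R^m|\le(Cm/p)^{m/2}$ that you state is the correct consequence.
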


The proofs of Lemmas~\ref{lem_tensor_cut_norm},~\ref{lem_tensor_cut_signal}, and~\ref{lem_tensor_cut_noise}, together with the proof of Proposition~\ref{prop_tensor_cut_ldlr}, are collected in Section~\ref{sec_proofs_tensor_cut_reverse_gap}.

Similar to Conjecture \ref{conj_low_degree}, to interpret the preceding low-degree bound as evidence for polynomial-time hardness, we again use the standard low-degree heuristic in a model-specific form. This is the usual computational interpretation of low-degree likelihood-ratio bounds. 

\begin{conjecture}[Spiked-tensor low-degree conjecture]\label{conj_tensor_cut_low_degree}
Fix $d\ge3$ and $B>(d-2)/2$, and consider \eqref{eq_tensor_cut_model} with \eqref{eq_tensor_cut_lambda}. There exists $c_d>0$ such that boundedness of
\[
\left\|(L_p^{\mathrm{TC}})_{\le\lfloor c_d(\log p)^2\rfloor}\right\|_{L^2(\mathbb Q_p^{\mathrm{TC}})}
\]
implies that no sequence of tests generated by a single uniform polynomial-time algorithm (possibly randomized) strongly distinguishes $\mathbb P_p^{\mathrm{TC}}$ from $\mathbb Q_p^{\mathrm{TC}}$.
\end{conjecture}

Proposition~\ref{prop_tensor_cut_ldlr} verifies the antecedent of Conjecture~\ref{conj_tensor_cut_low_degree} at the signal strength \eqref{eq_tensor_cut_lambda}. Hence, under the conjecture, strong detection is computationally hard at this signal scale. On the other hand, Lemmas~\ref{lem_tensor_cut_signal} and~\ref{lem_tensor_cut_noise} show that the observation itself estimates the planted mean in tensor cut norm with signal separation $s_p=2^{-d}\lambda_p$ and estimation radius $r_p=C_d\sqrt p$. Consequently,
\begin{equation}\label{eq_tensor_cut_reverse_gap}
\frac{s_p}{r_p}
\asymp_d
\frac{\lambda_p}{\sqrt p}
=
\frac{p^{d/4-1/2}}{(\log p)^B}.
\end{equation}

\subsection{Computational lower bound}\label{subsec:tensor_cut_lower_bound}

We now apply the reduction of Section~\ref{sec:framework} to the reverse gap \eqref{eq_tensor_cut_reverse_gap}.
Let $f_p:\mathbb R^{p\times\cdots\times p}\to\mathbb R_+$ satisfy
\begin{equation}\label{eq_tensor_cut_approximation}
\rho_{f,p}^{-1}\|T\|_{\square,d}
\le
f_p(T)
\le
\zeta_{f,p}\|T\|_{\square,d},
\qquad
T\in\mathbb R^{p\times\cdots\times p},
\end{equation}
and set $\gamma_{f,p}=\rho_{f,p}\zeta_{f,p}$. The proof of the following theorem is collected in Section~\ref{sec_proof_thm_tensor_cut_lower_bound}.

\begin{theorem}[Conditional lower bound for tensor cut-norm approximation]\label{thm_tensor_cut_lower_bound}
Fix $d\ge3$ and $B>(d-2)/2$, and assume Conjecture~\ref{conj_tensor_cut_low_degree}. Let $(f_p)_{p\ge1}$ be a family generated by a single uniform deterministic polynomial-time algorithm and satisfying \eqref{eq_tensor_cut_approximation}. Assume the factors are efficiently computable and the ratios $2^{-d-1}\lambda_p/\rho_{f,p}$ are uniformly polynomial-time computable. Then
\begin{equation}\label{eq_tensor_cut_lower_bound}
\limsup_{p\to\infty}
\gamma_{f,p}
\frac{(\log p)^B}{p^{d/4-1/2}}
>
0.
\end{equation}
\end{theorem}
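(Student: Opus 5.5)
The plan is to instantiate Theorem~\ref{thm_framework} with the spiked-tensor testing family \eqref{eq_tensor_cut_model}. Take $\mathcal F_p=\mathbb R^{p\times\cdots\times p}$ with the norm $\|\cdot\|_{\square,d}$, which is a norm by Lemma~\ref{lem_tensor_cut_norm}. Use the estimator $\widehat\mu=Y$, the parameter $\mu$ equal to $0$ under $\mathbb Q_p^{\mathrm{TC}}$ and to the rank-one spike under $\mathbb P_p^{\mathrm{TC}}$, and set
\[
s_p=2^{-d}\lambda_p,\qquad r_p=C_d\sqrt p.
\]
Let $\mathcal A$ be the class of test sequences generated by a single uniform (possibly randomized) polynomial-time algorithm. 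Let $\mathcal B$ be the class of uniform deterministic polynomial-time approximator families satisfying \eqref{eq_tensor_cut_approximation} with efficiently computable factors and uniformly polynomial-time computable ratios $2^{-d-1}\lambda_p/\rho_{f,p}$.

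I would verify the hypotheses of Theorem~\ref{thm_framework} in order.
\begin{enumerate}[leftmargin=1.8em]
\item The separation condition \eqref{eq_general_separation} holds almost surely by Lemma~\ref{lem_tensor_cut_signal}, and trivially under the null.
\item The estimation-radius condition \eqref{eq_estimation_radius} is exactly Lemma~\ref{lem_tensor_cut_noise}, since $\widehat\mu-\mu=Z$ under both laws.
\item For strong-detection hardness \eqref{eq_no_strong_distinguishability}, note that $\lambda_p$ has the form \eqref{eq_tensor_cut_lambda} with the fixed $B>(d-2)/2$. Proposition~\ref{prop_tensor_cut_ldlr}, applied with $c=c_d$ from Conjecture~\ref{conj_tensor_cut_low_degree}, gives boundedness of the degree-$\lfloor c_d(\log p)^2\rfloor$ likelihood-ratio norm. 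The conjecture then yields that no sequence in $\mathcal A$ strongly distinguishes.
\end{enumerate}

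Threshold closure \eqref{eq_closure} needs the test $I_{\{f_p(Y)>s_p/(2\rho_{f,p})\}}$ to lie in $\mathcal A$, and here $s_p/(2\rho_{f,p})=2^{-d-1}\lambda_p/\rho_{f,p}$ is exactly the ratio assumed computable. The single algorithm proceeds as follows: on input $Y$, it runs the uniform algorithm for $f_p$ on $Y$ and computes the threshold in polynomial time. It then compares the two numbers, which takes polynomial time given polynomial bit-lengths of the outputs. Composition of uniform polynomial-time algorithms stays uniform polynomial time, and a deterministic algorithm is a special case of a randomized one.

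Theorem~\ref{thm_framework} then gives $\limsup_p \gamma_{f,p}r_p/s_p\ge1/4$. Substituting $r_p/s_p=2^dC_d\sqrt p/\lambda_p=2^dC_d(\log p)^B/p^{d/4-1/2}$ yields
\[
\limsup_{p\to\infty}\gamma_{f,p}\frac{(\log p)^B}{p^{d/4-1/2}}\ge\frac{1}{2^{d+2}C_d}>0,
\]
which is \eqref{eq_tensor_cut_lower_bound}.

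The main obstacle is the closure step, specifically the bookkeeping about input representation. The test receives real-valued Gaussian data and $\lambda_p$ may be irrational, so one must adopt a model of computation (for example, a real-RAM model or polynomial-precision rounding) under which $f_p(Y)$ and the comparison are polynomial time. I would handle this by adopting the same conventions used for Theorem~\ref{thm_main}, since the paper treats that instance identically. A possible second point of care is that the conjecture is stated for $c_d$, and the proposition holds for every fixed $c>0$, so there is no mismatch.
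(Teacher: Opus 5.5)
Your proposal is correct and matches the paper's proof step for step. Like the paper, you instantiate Theorem~\ref{thm_framework} with $\widehat\mu=Y$, $s_p=2^{-d}\lambda_p$, $r_p=C_d\sqrt p$, take hardness from Proposition~\ref{prop_tensor_cut_ldlr} together with Conjecture~\ref{conj_tensor_cut_low_degree}, get separation and radius from Lemmas~\ref{lem_tensor_cut_signal} and~\ref{lem_tensor_cut_noise}, and get threshold closure from the assumed computability of $2^{-d-1}\lambda_p/\rho_{f,p}$. Your explicit constant $1/(2^{d+2}C_d)$ and the remark on the computational model are extra bookkeeping that the paper leaves implicit.
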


\begin{remark}[Randomized approximators]
The same conclusion extends to possibly randomized approximators that satisfy \eqref{eq_tensor_cut_approximation} with probability at least $2/3$ for every input. Indeed, for a sufficiently large constant $C$, repeat the algorithm independently $C\log p$ times and take the median before applying the reduction.
\end{remark}

\begin{remark}[Polynomial-threshold extension]\label{rem_tensor_cut_polynomial_threshold}
Under an affirmative answer to Question~4.2 of \cite{kunisky2019notes} for the spiked-tensor family, the same lower bound holds without a running-time assumption for every approximator of polynomial-threshold degree $O((\log p)^2)$. The precise statement appears in Section~\ref{sec:polynomial_threshold_approximators}.
\end{remark}

\begin{remark}[Comparison with known upper bounds]
    \label{remark:tensor_cut_upper_gap}

    For every fixed $d\ge2$, there is a randomized polynomial-time lower certificate $g_p$ and a constant $C_d>0$ such that
    \begin{equation*}\label{eq_tensor_cut_upper_summary}
    \frac1{C_dp^{(d-2)/2}}
    \|T\|_{\square,d}
    \le
    g_p(T)
    \le
    \|T\|_{\square,d}
    \end{equation*}
    for every order-$d$ tensor $T$, with probability at least $2/3$.
    The construction is based on the randomized polynomial-time approximation to the Boolean multilinear norm developed in \cite{heApproximationAlgorithmsDiscrete2013}.
    Proposition~\ref{prop_tensor_cut_upper_bound} gives the formal statement.
    
    For $d\ge3$, Theorem~\ref{thm_tensor_cut_lower_bound} and Proposition~\ref{prop_tensor_cut_upper_bound} leave a polynomial gap between
    \[
    \frac{p^{d/4-1/2}}{\polylog(p)}
    \qquad\text{and}\qquad
    p^{(d-2)/2}.
    \]
    When $d=2$, the reverse-gap ratio in \eqref{eq_tensor_cut_reverse_gap} does not diverge at the low-degree threshold. This agrees with the constant-factor polynomial-time approximation for the ordinary matrix cut norm obtained through Grothendieck's inequality in \cite{alon2006approximating}.

\end{remark}

\subsection{Connection with the literature}\label{subsec:tensor_cut_literature}

For comparison, define the normalized Boolean multilinear norm
\begin{equation*}\label{eq_boolean_multilinear_norm}
\|T\|_{\infty\to1,d}
:=
p^{-d/2}
\max_{x^{(1)},\ldots,x^{(d)}\in\{\pm1\}^p}
\left|
\left\langle
T,
x^{(1)}\otimes\cdots\otimes x^{(d)}
\right\rangle
\right|.
\end{equation*}
The tensor cut norm is standard and, for fixed $d$, is equivalent up to constants to this Boolean multilinear norm:
\begin{equation}\label{eq_tensor_cut_boolean_equivalence}
\|T\|_{\square,d}
\le
\|T\|_{\infty\to1,d}
\le
2^d\|T\|_{\square,d}.
\end{equation}
The first inequality follows by writing each indicator as $(\mathbf 1+x)/2$, and the second by partitioning each mode according to the signs of a Boolean vector. This equivalence and polynomial-time approximation algorithms for the tensor cut norm are developed in \cite{heApproximationAlgorithmsDiscrete2013}.

The low-degree calculation in Proposition~\ref{prop_tensor_cut_ldlr} is the asymmetric mode-wise analogue of the spiked-tensor calculation in \cite{kunisky2019notes}. The new approximation consequence comes from evaluating the additive Gaussian observation itself in the tensor cut norm: its stochastic radius is only $O_{\mathbb P}(\sqrt p)$, far below the conjectured computational detection scale $p^{d/4}/\polylog(p)$.

\begin{remark}[Why independent mode-wise subsets are needed]\label{rem_tensor_cut_symmetric_warning}
On the full symmetric tensor space, the functional
\[
T
\longmapsto
\max_{x\in\{\pm1\}^p}
|\langle T,x^{\otimes d}\rangle|
\]
need only be a seminorm. For example, the nonzero homogeneous polynomial $x_1^2x_2^{d-2}-x_2^d$ vanishes on the entire hypercube because $x_1^2=x_2^2=1$. Independent mode-wise subsets in \eqref{eq_tensor_cut_norm} avoid these hypercube identities, and singleton rectangles guarantee definiteness.
\end{remark}

\section{Discussion}\label{sec:discussion}

This work develops a general route from reverse detection--estimation gaps to computational lower bounds for norm approximation. Across the examples, the key quantity is the ratio between the norm separation under the alternative and the attainable estimation radius. When this ratio diverges, a sufficiently accurate norm approximator would convert an efficient estimator into a successful test. The resulting lower bounds are conditional on the corresponding planted-clique or low-degree hardness assumptions, as well as the factor and threshold computability conditions needed by the reduction.

Several questions concerning upper bounds remain open. For the sparse spectral norm and the normalized sparse matrix cut norm, the conditional lower bounds and the deterministic certificates differ only by a factor of order $\sqrt{\log p}$. For the tensor spectral and tensor cut norms, and consequently for the tensor nuclear norm through duality, the best general worst-case upper bounds remain polynomially larger than the conditional lower scales. Results for nonnegative and random tensors provide evidence that the lower exponent may be the correct one for tensor spectral norm, but no globally valid worst-case algorithm attaining this scale is currently known. Closing these gaps, either by improving the upper bounds or by strengthening the hardness constructions, is a central open problem.

The tensor nuclear-norm result obtained here is indirect and applies only to efficiently dualizable norm-valued approximators. We conjecture that a direct reverse detection--estimation gap also exists for the tensor nuclear norm itself. Constructing the required testing family, however, appears substantially more difficult. The alternative must have a large nuclear-norm signal while remaining computationally hard to detect, and an efficient estimator must simultaneously achieve a much smaller error in nuclear norm. The nuclear norm is highly sensitive to diffuse high-dimensional noise, whereas imposing sufficiently strong low-rank structure may make the alternative detectable through simpler matricization-based or Frobenius-type statistics. Balancing these competing requirements is an interesting open direction; a successful construction could remove the efficient-dualizability restriction and yield a direct computational lower bound for tensor nuclear-norm approximation.

More broadly, it would be useful to identify additional statistical models in which efficient estimation coexists with computationally hard detection in a nontrivial norm, and to understand when the reverse-gap ratio predicts the optimal polynomial-time approximation distortion.

\clearpage

\bibliographystyle{apalike}
\bibliography{reference}

\clearpage

\appendix

\section{Polynomial-Threshold Extensions}\label{sec:polynomial_threshold_approximators}

This section records the representation-class extensions described in Remarks~\ref{rem_polynomial_threshold_extension} and~\ref{rem_tensor_cut_polynomial_threshold}. Requiring the numerical output of a norm approximator itself to be a polynomial is incompatible with a global multiplicative norm guarantee except in trivial cases: a polynomial's behavior under scalar rescaling cannot agree globally with the degree-one positive homogeneity of a norm, and a nonzero linear form cannot be nonnegative on both $T$ and $-T$. The appropriate representation condition is instead imposed on the decision boundary used by the reduction.

\begin{definition}[Polynomial-threshold degree]\label{def_threshold_degree}
Let $\mathcal F_p$ be a finite-dimensional real vector space equipped with a fixed coordinate representation. A map $f_p:\mathcal F_p\to\mathbb R_+$ has \emph{polynomial-threshold degree at most $D$} if, for every $\tau\in\mathbb R$, there exists a real polynomial $q_{p,\tau}$ of total degree at most $D$ in the coordinates of $T\in\mathcal F_p$ such that
\begin{equation*}\label{eq_threshold_degree}
I_{\{f_p(T)>\tau\}}
=
I_{\{q_{p,\tau}(T)>0\}},
\qquad
T\in\mathcal F_p.
\end{equation*}
\end{definition}

The polynomial may vary with the threshold because the reduction fixes only one separating threshold at a time. Definition~\ref{def_threshold_degree} gives a nonempty class that is strictly larger than the class of polynomial-valued maps. For example, on any Euclidean coordinate space, the Frobenius norm $f_p(T)=\|T\|_F$ has polynomial-threshold degree at most two: for $\tau\ge0$, take $q_{p,\tau}(T)=\|T\|_F^2-\tau^2$, while for $\tau<0$ take the constant polynomial $1$.

\subsection{Tensor spectral norm}

The implication needed for the cumulant example is the bounded-LDLR direction posed as Question~4.2 in Section~4.1 of \cite{kunisky2019notes}. We state its specialization to the cumulant testing family.

\begin{assumption}[Bounded-LDLR thresholding for the cumulant family]\label{assump_bounded_ldlr_thresholding}
For the cumulant testing family in Section~\ref{sec:tensor}, let $M_p=O((\log p)^2)$ be any sequence of nonnegative integers such that
\[
\bigl\|(L_p)_{\le M_p}\bigr\|_{L^2(\mathbb Q_p)}
=
O(1).
\]
Then no sequence of sign-threshold tests $I_{\{q_p>0\}}$ strongly distinguishes $\mathbb P_p$ from $\mathbb Q_p$, where each $q_p$ is a real polynomial of total degree at most $M_p$ in the scalar coordinates of $(Y_1,\ldots,Y_n)$. Equivalently, every such sequence satisfies
\begin{equation}\label{eq_polynomial_threshold_strong_hardness}
\limsup_{p\to\infty}
\left[
\mathbb Q_p(q_p>0)+\mathbb P_p(q_p\le0)
\right]
>
0.
\end{equation}
\end{assumption}

\begin{theorem}[Polynomial-threshold tensor spectral-norm lower bound]\label{thm_polynomial_threshold_extension}
Fix $d\ge3$ and assume Assumption~\ref{assump_bounded_ldlr_thresholding}. Let $(f_p)_{p\ge1}$ be any sequence of maps $f_p:\operatorname{Sym}^d(\mathbb R^p)\to\mathbb R_+$ having polynomial-threshold degree at most $D_p$ and satisfying one of \eqref{eq_tensor_two_sided}, \eqref{eq_tensor_lower}, or \eqref{eq_tensor_upper}. There exist constants $c_d,B_d>0$, depending only on $d$, such that if $D_p\le c_d(\log p)^2$, then
\[
\limsup_{p\to\infty}
\gamma_{f,p}
\frac{(\log p)^{B_d}}{p^{d/4-1/2}}
>
0.
\]
For the choice \eqref{eq_choice_n_a}, one may take $B_d=9d/2$.
\end{theorem}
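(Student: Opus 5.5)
We argue by contradiction, instantiating Theorem~\ref{thm_framework} with the test class $\mathcal A$ consisting of all sequences of sign-threshold tests $I_{\{q_p>0\}}$ with $q_p$ a real polynomial of degree at most $M_p=O((\log p)^2)$ in the coordinates of $(Y_1,\ldots,Y_n)$. The class $\mathcal B$ is all approximator sequences of polynomial-threshold degree at most $D_p$. We take $\mu_p=\bcK_d(Y_i\mid U)$ under $\mathbb P_p$, $\mu_p=0$ under $\mathbb Q_p$, and $\widehat\mu_p=\widehat{\bcK}_d$, with $s_p,r_p$ from \eqref{eq_cumulant_signal_radius}. Proposition~\ref{prop_cumulant_reverse_gap} then supplies \eqref{eq_general_separation} (equality $\|\mu_p\|_s=s_p$ for every $U$) and \eqref{eq_estimation_radius}. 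No computability condition is needed, since the thresholds are fixed real numbers, not computed objects.

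\emph{Step 1 (hardness).} Proposition~\ref{prop_cumulant_reverse_gap} gives $\|(L_p)_{\le\lfloor c(\log p)^2\rfloor}\|_{L^2(\mathbb Q_p)}=O(1)$ for every fixed $c>0$. Taking $M_p=\lfloor c'(\log p)^2\rfloor$ for a suitable fixed $c'$, Assumption~\ref{assump_bounded_ldlr_thresholding} yields \eqref{eq_polynomial_threshold_strong_hardness}, which is \eqref{eq_no_strong_distinguishability} for $\mathcal A$. Since lower-degree projections have smaller norm, the same holds for any $M_p\le\lfloor c'(\log p)^2\rfloor$.

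\emph{Step 2 (threshold closure; the main point).} Fix $\tau_p=s_p/(2\rho_{f,p})$. By Definition~\ref{def_threshold_degree} there is a polynomial $q_{p,\tau_p}$ of degree at most $D_p$ in the entries of $T$ with $I_{\{f_p(T)>\tau_p\}}=I_{\{q_{p,\tau_p}(T)>0\}}$. Substituting $T=\widehat{\bcK}_d$, we must bound the degree in the data. Each entry of the plug-in cumulant $\widehat{\bcK}_d$ is a polynomial in the empirical moments $\widehat{\bcM}_j$, $j\le d$, via the moment--cumulant formula. Its partitions have total order $d$, so each entry is a polynomial of degree at most $d$ in the coordinates of $(Y_1,\ldots,Y_n)$. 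Hence $q_{p,\tau_p}(\widehat{\bcK}_d)$ has degree at most $dD_p\le dc_d(\log p)^2$. Choosing $c_d$ with $dc_d\le c'$ places $\phi_p=I_{\{q_{p,\tau_p}(\widehat{\bcK}_d)>0\}}$ in $\mathcal A$, so threshold closure \eqref{eq_closure} holds. Two points require care here:
\begin{itemize}
\item the degree bound must hold in the raw data coordinates on which Assumption~\ref{assump_bounded_ldlr_thresholding} is phrased;
\item the symmetric coordinate representation of $\operatorname{Sym}^d(\mathbb R^p)$ used in Definition~\ref{def_threshold_degree} must be linear in the entries of $\widehat{\bcK}_d$, which is true for any fixed basis.
\end{itemize}

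\emph{Step 3 (conclusion).} Theorem~\ref{thm_framework} now gives $\limsup_p\gamma_{f,p}r_p/s_p\ge1/4$. The case where the conclusion is vacuous, $\rho_{f,p}$ or the thresholds being arbitrary, is harmless because no computability of $\tau_p$ is needed. The same argument covers \eqref{eq_tensor_lower} and \eqref{eq_tensor_upper} by setting $\zeta_{f,p}=1$ or $\rho_{f,p}=1$. Finally, Proposition~\ref{prop_cumulant_reverse_gap} gives $s_p/r_p\asymp_d p^{d/4-1/2}/(\log p)^{9d/2}$. Therefore $\limsup_p\gamma_{f,p}(\log p)^{9d/2}/p^{d/4-1/2}>0$, so we may take $B_d=9d/2$.
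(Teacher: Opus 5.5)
Your proposal is correct and follows essentially the same argument as the paper. The paper also applies the null/alternative inequalities from the proof of Theorem~\ref{thm_framework} at $\tau_p=s_p/(2\rho_{f,p})$. It composes the degree-$D_p$ threshold polynomial with the degree-$d$ entries of $\widehat{\bcK}_d$ to obtain a sign-threshold test of degree $dD_p=O((\log p)^2)$ in the samples, and contradicts Assumption~\ref{assump_bounded_ldlr_thresholding} via the low-degree bound. The only difference is packaging: you formally instantiate Theorem~\ref{thm_framework} with a polynomial-threshold test class $\mathcal A$, whereas the paper reruns the two inequalities directly and applies the assumption at degree $M_p=dD_p$.
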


\begin{proof}
Consider the testing family, signal level $s_p$, estimator $\widehat{\bcK}_d$, and estimation radius $r_p$ constructed in Section~\ref{sec_proof}. Suppose, toward a contradiction, that $\gamma_{f,p}<s_p/(4r_p)$ for all sufficiently large $p$, and set $\tau_p=s_p/(2\rho_{f,p})$. The null and alternative inequalities in the proof of Theorem~\ref{thm_framework} give
\begin{equation}\label{eq_threshold_test_success}
\mathbb Q_p\bigl(f_p(\widehat{\bcK}_d)>\tau_p\bigr)
+
\mathbb P_p\bigl(f_p(\widehat{\bcK}_d)\le\tau_p\bigr)
\longrightarrow0.
\end{equation}

By Definition~\ref{def_threshold_degree}, there is a polynomial $q_{p,\tau_p}$ of degree at most $D_p$ in the entries of its tensor argument such that
\begin{equation*}\label{eq_composed_threshold}
I_{\{f_p(\widehat{\bcK}_d)>\tau_p\}}
=
I_{\{q_{p,\tau_p}(\widehat{\bcK}_d)>0\}}.
\end{equation*}
Every coordinate of $\widehat{\bcK}_d$ has degree at most $d$ in the observed samples by \eqref{eq_sample_cumulant_partition}. Hence $q_{p,\tau_p}(\widehat{\bcK}_d)$ has degree at most $M_p=dD_p=O((\log p)^2)$ in the samples. In particular, $M_p=o(\min\{p,n\})$, and the calculation following \eqref{eq_ldlr_bound}, which is valid for every degree cutoff of order $(\log p)^2$, gives
\[
\bigl\|(L_p)_{\le M_p}\bigr\|_{L^2(\mathbb Q_p)}
=
O(1).
\]
Equation~\eqref{eq_threshold_test_success} contradicts \eqref{eq_polynomial_threshold_strong_hardness}. Therefore
\[
\limsup_{p\to\infty}
\frac{\gamma_{f,p}r_p}{s_p}
\ge
\frac14.
\]
Combining this inequality with \eqref{eq_signal_scale}, \eqref{eq_estimation_scale_cumulant}, and $n\asymp p^{d-1}$ gives
\[
\limsup_{p\to\infty}
\gamma_{f,p}
\frac{(\log p)^{9d/2}}{p^{d/4-1/2}}
>
0.
\]
\end{proof}

\subsection{Tensor cut norm}

We use the analogous specialization of Question~4.2 for the asymmetric spiked-tensor family.

\begin{assumption}[Bounded-LDLR thresholding for the spiked-tensor family]\label{assump_tensor_cut_bounded_ldlr_thresholding}
Fix $d\ge3$ and $B>(d-2)/2$, and consider \eqref{eq_tensor_cut_model} with \eqref{eq_tensor_cut_lambda}. Let $M_p=O((\log p)^2)$ be any sequence for which
\[
\bigl\|(L_p^{\mathrm{TC}})_{\le M_p}\bigr\|_{L^2(\mathbb Q_p^{\mathrm{TC}})}
=
O(1).
\]
Then no sequence of sign-threshold tests $I_{\{q_p(Y)>0\}}$ strongly distinguishes $\mathbb P_p^{\mathrm{TC}}$ from $\mathbb Q_p^{\mathrm{TC}}$, where each $q_p$ is a real polynomial of total degree at most $M_p$ in the entries of $Y$.
\end{assumption}

\begin{theorem}[Polynomial-threshold tensor cut-norm lower bound]\label{thm_tensor_cut_polynomial_threshold}
Fix $d\ge3$ and $B>(d-2)/2$, and assume Assumption~\ref{assump_tensor_cut_bounded_ldlr_thresholding}. Let $(f_p)_{p\ge1}$ be a sequence of maps $f_p:\mathbb R^{p\times\cdots\times p}\to\mathbb R_+$ having polynomial-threshold degree at most $D_p$ and satisfying \eqref{eq_tensor_cut_approximation}. There is a constant $c_d>0$ such that, if $D_p\le c_d(\log p)^2$, then
\[
\limsup_{p\to\infty}
\gamma_{f,p}
\frac{(\log p)^B}{p^{d/4-1/2}}
>
0.
\]
\end{theorem}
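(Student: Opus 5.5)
The argument follows the proof of Theorem~\ref{thm_polynomial_threshold_extension}, with the cumulant family replaced by the spiked-tensor family of Section~\ref{sec:tensor_cut} and the estimator taken to be $\widehat\mu=Y$ itself. Take $s_p=2^{-d}\lambda_p$ with $\lambda_p$ as in \eqref{eq_tensor_cut_lambda}, and $r_p=C_d\sqrt p$. Lemma~\ref{lem_tensor_cut_signal} gives the separation \eqref{eq_general_separation}, and Lemma~\ref{lem_tensor_cut_noise} gives the estimation radius \eqref{eq_estimation_radius} under both $\mathbb Q_p^{\mathrm{TC}}$ and $\mathbb P_p^{\mathrm{TC}}$. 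Suppose, toward a contradiction, that $\gamma_{f,p}<s_p/(4r_p)$ for all large $p$, and set $\tau_p=s_p/(2\rho_{f,p})$. The null and alternative inequalities in the proof of Theorem~\ref{thm_framework} then show that the test $I_{\{f_p(Y)>\tau_p\}}$ strongly distinguishes. Those inequalities use only the deterministic approximation bounds \eqref{eq_tensor_cut_approximation} on the high-probability events, so no computational assumption enters here.

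Next, express this test as a polynomial threshold. By Definition~\ref{def_threshold_degree} there is a polynomial $q_{p,\tau_p}$ of total degree at most $D_p$ in the entries of $Y$ with $I_{\{f_p(Y)>\tau_p\}}=I_{\{q_{p,\tau_p}(Y)>0\}}$. Unlike the cumulant case, the estimator is the identity map of the data, so there is no degree inflation: the composed polynomial has degree $M_p:=D_p\le c_d(\log p)^2$. Proposition~\ref{prop_tensor_cut_ldlr}, with $c=c_d$ (indeed any fixed $c$ works), gives $\|(L_p^{\mathrm{TC}})_{\le\lfloor c_d(\log p)^2\rfloor}\|=O(1)$. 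Since the low-degree projections are nested, $\|(L_p^{\mathrm{TC}})_{\le M_p}\|$ is nondecreasing in $M_p$, so the bound also holds at degree $M_p$. This sequence $M_p=O((\log p)^2)$ satisfies the hypothesis of Assumption~\ref{assump_tensor_cut_bounded_ldlr_thresholding}, whose conclusion contradicts the strong distinguishability established above.

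Therefore $\limsup_{p}\gamma_{f,p}r_p/s_p\ge1/4$. Substituting the reverse-gap ratio \eqref{eq_tensor_cut_reverse_gap}, $s_p/r_p\asymp_d p^{d/4-1/2}/(\log p)^B$, gives the claimed bound. The case where $\gamma_{f,p}<s_p/(4r_p)$ fails only along a subsequence is handled exactly as in Theorem~\ref{thm_framework}, since we argue from the negation of the $\limsup$ statement.

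The main point needing care is the degree bookkeeping and the monotonicity step. One must check that the threshold polynomial is allowed to depend on $\tau_p$, which Definition~\ref{def_threshold_degree} permits. One must also check that the low-degree norm bound transfers from degree $\lfloor c(\log p)^2\rfloor$ to the possibly smaller or non-integer-related $M_p$; monotonicity of the projection norm settles this. Everything else is a direct transcription of the framework argument.
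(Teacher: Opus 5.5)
Your proposal is correct and follows essentially the same route as the paper: argue by contradiction using the null and alternative inequalities from Theorem~\ref{thm_framework} with $\widehat\mu=Y$, $s_p=2^{-d}\lambda_p$ and $r_p=C_d\sqrt p$. Then rewrite $I_{\{f_p(Y)>\tau_p\}}$ as a degree-$D_p$ sign test and play Proposition~\ref{prop_tensor_cut_ldlr} against Assumption~\ref{assump_tensor_cut_bounded_ldlr_thresholding}. Your monotonicity remark for passing from degree $\lfloor c_d(\log p)^2\rfloor$ down to $D_p$ is a harmless extra detail. The paper simply applies the proposition directly at degree $D_p$, which is equally valid since the bound holds for every cutoff of order $(\log p)^2$.
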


\begin{proof}
Suppose, toward a contradiction, that $\gamma_{f,p}<s_p/(4r_p)$ for all sufficiently large $p$, where $s_p=2^{-d}\lambda_p$ and $r_p=C_d\sqrt p$ are as in Section~\ref{subsec:tensor_cut_reverse_gap}. Set $\tau_p=s_p/(2\rho_{f,p})$. The proof of Theorem~\ref{thm_framework}, applied with $\widehat\mu=Y$, gives
\[
\mathbb Q_p^{\mathrm{TC}}\bigl(f_p(Y)>\tau_p\bigr)
+
\mathbb P_p^{\mathrm{TC}}\bigl(f_p(Y)\le\tau_p\bigr)
\longrightarrow0.
\]
By Definition~\ref{def_threshold_degree}, this test is the sign threshold of a polynomial $q_{p,\tau_p}(Y)$ of degree at most $D_p$. Proposition~\ref{prop_tensor_cut_ldlr} gives
\[
\bigl\|(L_p^{\mathrm{TC}})_{\le D_p}\bigr\|_{L^2(\mathbb Q_p^{\mathrm{TC}})}
=
O(1),
\]
contradicting Assumption~\ref{assump_tensor_cut_bounded_ldlr_thresholding}. Therefore
\[
\limsup_{p\to\infty}
\frac{\gamma_{f,p}r_p}{s_p}
\ge
\frac14,
\]
and \eqref{eq_tensor_cut_reverse_gap} proves the result.
\end{proof}

\section{A Quantitative Refinement of the Koiran--Zouzias Reduction}\label{sec_kz_quantitative_refinement}

Theorems~3--5 of \cite{koiranHiddenCliquesCertification2014} establish hardness for polynomially separated RIP parameters. We extract a complementary consequence of their Cholesky reduction while keeping the RIP order equal to the clique size. Fix $\varepsilon\in(0,1/2)$ and let $k=\lceil p^{1/2-\varepsilon}\rceil$.
Hypothesis $(H_\varepsilon)$ of \cite{koiranHiddenCliquesCertification2014} asserts that no deterministic polynomial-time algorithm accepts every graph on $p$ vertices containing a $k$-clique while rejecting a graph drawn from $G(p,1/2)$ with probability tending to one.
We optimize the null parameter in their Proposition~4 against the clique obstruction in their Proposition~5, obtaining a lower bound that rules out distortion $o(\sqrt{k/\log(ep/k)})$.

Given a graph $G$, let $W$ be its signed adjacency matrix, with zero diagonal, $W_{ij}=1$ when $\{i,j\}$ is an edge, and $W_{ij}=-1$ otherwise.
Fix a constant $c_0\in(0,1/3)$ and form
\[
B=I+\frac{c_0}{\sqrt p}W.
\]
If $B$ is positive semidefinite, the Cholesky reduction returns a matrix $X$ satisfying $X^\top X=B$; otherwise, it returns $X=0$.

\begin{proposition}\label{prop_kz_quantitative_refinement}
Under Hypothesis $(H_\varepsilon)$ of \cite{koiranHiddenCliquesCertification2014}, let $(f_{p,k})_{p\ge1}$ be a family generated by a single uniform deterministic polynomial-time algorithm and satisfying
\[
\rho_{f,p}^{-1}\delta_k(X)\le f_{p,k}(X)\le\zeta_{f,p}\delta_k(X)
\]
for every input $X$, with $\gamma_{f,p}=\rho_{f,p}\zeta_{f,p}$. Assume that the approximation factors are efficiently computable and that the ratios
\[
\tau_p=\frac{c_0(k-1)}{2\rho_{f,p}\sqrt p}
\]
are uniformly polynomial-time computable. Then
\[
\limsup_{p\to\infty}
\gamma_{f,p}\sqrt{\frac{\log(ep/k)}{k}}
>0.
\]
\end{proposition}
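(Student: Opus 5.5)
Assume, toward a contradiction, that $\gamma_{f,p}\sqrt{\log(ep/k)/k}\to0$. I will build a deterministic polynomial-time test that violates Hypothesis $(H_\varepsilon)$. On input $G$, the test forms $W$ and $B=I+c_0W/\sqrt p$, runs the Cholesky reduction to get $X$, computes $f_{p,k}(X)$ and $\tau_p$, and accepts iff either $B$ is not positive semidefinite or $f_{p,k}(X)>\tau_p$. Because $\delta_k(X)=\|X^\top X-I\|_{\mathrm{sp},k}$, the argument has the same shape as the proof of Theorem~\ref{thm_framework}. The estimator is $X^\top X-I=c_0W/\sqrt p$, and the roles of $s_p$ and $r_p$ are played by the clique signal and the null radius.

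\textbf{Step 1 (clique side, Proposition~5 of Koiran--Zouzias).} Suppose $G$ contains a $k$-clique $S$. If $B$ is not positive semidefinite the test accepts, so assume $B\succeq0$. Then $X^\top X-I=c_0W/\sqrt p$. Take $u=\mathbf 1_S/\sqrt k$, which gives $u^\top Wu=(k-1)$. Hence $\delta_k(X)\ge c_0(k-1)/\sqrt p$, and therefore $f_{p,k}(X)\ge\rho_{f,p}^{-1}c_0(k-1)/\sqrt p=2\tau_p>\tau_p$. So every graph with a $k$-clique is accepted deterministically.

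\textbf{Step 2 (null side, optimized Proposition~4).} Now let $G\sim G(p,1/2)$, so $W$ is a symmetric Rademacher matrix with zero diagonal. Standard bounds give $\|W\|_{\mathrm{op}}\le(2+o(1))\sqrt p$ with high probability. Since $c_0<1/3$, this makes $B\succeq (1-3c_0)I\succ0$ with high probability, so the first acceptance clause fails w.h.p. Next I need $\|W\|_{\mathrm{sp},k}\le C\sqrt{k\log(ep/k)}$ with probability tending to one. This follows from the net and union-bound argument behind Theorem~\ref{thm_sparse_norm_concentration}: under the null, $W=2\widehat\mu$ is exactly twice the centered adjacency matrix, so the theorem applies verbatim. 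Consequently $\delta_k(X)\le c_0C\sqrt{k\log(ep/k)}/\sqrt p$, and
\[
f_{p,k}(X)\le \zeta_{f,p}\frac{c_0C\sqrt{k\log(ep/k)}}{\sqrt p}=\frac{\gamma_{f,p}}{\rho_{f,p}}\frac{c_0C\sqrt{k\log(ep/k)}}{\sqrt p}.
\]
Under the contradiction hypothesis, $\gamma_{f,p}C\sqrt{k\log(ep/k)}<(k-1)/2$ for large $p$ (using $k\to\infty$). So the right side is below $\tau_p$, and the test rejects with probability tending to one.

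\textbf{Step 3 (algorithmic closure).} The test runs in deterministic polynomial time. Forming $B$ is trivial. The PSD check and Cholesky factorization are polynomial; I would follow Koiran--Zouzias' convention for exact or approximate arithmetic, since any perturbation there is absorbed by lowering the constant. Evaluating $f_{p,k}$ is polynomial by hypothesis, and so is computing $\tau_p$. Together with Steps 1--2, this contradicts $(H_\varepsilon)$, which proves the proposition.

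The main obstacle is Step 2. The sparse-norm null bound must hold at the optimal scale $\sqrt{k\log(ep/k)}$ rather than the cruder bound implicit in their Proposition~4. It must also hold simultaneously with the PSD event, and a union of two high-probability events handles that. I expect the identification $W=2\widehat\mu$ to make this routine given Theorem~\ref{thm_sparse_norm_concentration}. A secondary point is the numerical accuracy of Cholesky, which I would treat by noting that only $X^\top X$ enters $\delta_k$ and rounding errors are negligible against the gap.
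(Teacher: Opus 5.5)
Your proposal is correct and follows the paper's proof. Both use the same threshold test at $\tau_p$, the same clique witness $u=\mathbf 1_S/\sqrt k$ giving $\delta_k(X)\ge c_0(k-1)/\sqrt p$, and a null bound $\delta_k(X)\lesssim\sqrt{k\log(ep/k)/p}$ compared against it under $\gamma_{f,p}=o(\sqrt{k/\log(ep/k)})$.

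The differences are minor. You obtain the null bound from Theorem~\ref{thm_sparse_norm_concentration} via $W=2\widehat\mu$, plus an operator-norm bound ensuring $B\succ0$; the paper gets both at once by choosing $\delta_0=C_0\sqrt{k\log(ep/k)/p}$ in Koiran--Zouzias' Proposition~4. You also accept non-PSD inputs outright, whereas the paper uses $X=0$, $\delta_k(X)=1$.
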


\begin{remark}
Because $\delta_k(X)=\|X^\top X-I\|_{\mathrm{sp},k}$, any deterministic global approximator for the sparse spectral norm can be applied to $X^\top X-I$.
The same lower bound, ruling out $o(\sqrt{k/\log(ep/k)})$ distortion, therefore holds for deterministic global sparse spectral norm approximation under the factor and threshold computability conditions of Proposition~\ref{prop_kz_quantitative_refinement}.
\end{remark}

\begin{proof}
Choose a sufficiently large universal constant $C_0$ and define
\begin{equation*}\label{eq_kz_null_parameter}
\delta_0=C_0\sqrt{\frac{k\log(ep/k)}{p}}.
\end{equation*}
Proposition~4 of \cite{koiranHiddenCliquesCertification2014} shows that, when $G\sim G(p,1/2)$, the matrix $X$ satisfies the RIP of order $k$ with parameter $\delta_0$ with probability at least
\begin{equation}\label{eq_kz_null_probability}
1-4\exp\left\{k\log\left(\frac{ep}{k}\right)-\frac{1}{32}\left(\frac{\delta_0\sqrt p}{c_0}-3\sqrt{k}\right)^2\right\}-4\exp\left\{-\frac{(1/c_0-3)^2p}{32}\right\}.
\end{equation}
The condition required in that proposition is
\[
3c_0<\min\left\{1,\frac{\delta_0\sqrt p}{\sqrt{k}}\right\},
\]
which holds for all sufficiently large $p$ because $c_0<1/3$ and
\[
\frac{\delta_0\sqrt p}{\sqrt{k}}=C_0\sqrt{\log\left(\frac{ep}{k}\right)}.
\]
Moreover,
\[
\left(\frac{\delta_0\sqrt p}{c_0}-3\sqrt{k}\right)^2=k\left(\frac{C_0}{c_0}\sqrt{\log\left(\frac{ep}{k}\right)}-3\right)^2.
\]
Taking $C_0$ sufficiently large makes this quantity at least $64k\log(ep/k)$.
The lower bound in \eqref{eq_kz_null_probability} therefore tends to one, and hence
\begin{equation}\label{eq_kz_null_bound}
\delta_k(X)\le C_0\sqrt{\frac{k\log(ep/k)}{p}}
\end{equation}
with probability tending to one under $G(p,1/2)$.

Suppose next that $G$ contains a clique $S$ of size $k$.
If $B$ is positive semidefinite, take $u=\mathbf 1_S/\sqrt{k}$.
Since $u^\top Wu=k-1$, we have
\[
\|Xu\|_2^2-1=u^\top(B-I)u=\frac{c_0(k-1)}{\sqrt p}.
\]
If $B$ is not positive semidefinite, then $X=0$ and $\delta_k(X)=1$, which is larger than $c_0(k-1)/\sqrt p$ for all sufficiently large $p$.
Thus Proposition~5 of \cite{koiranHiddenCliquesCertification2014} gives, for every graph containing a $k$-clique,
\begin{equation}\label{eq_kz_clique_bound}
\delta_k(X)\ge\frac{c_0(k-1)}{\sqrt p}.
\end{equation}

Combining \eqref{eq_kz_null_bound} and \eqref{eq_kz_clique_bound}, the ratio between the clique and null cases is
\[
\frac{c_0(k-1)/\sqrt p}{C_0\sqrt{k\log(ep/k)/p}}\asymp\sqrt{\frac{k}{\log(ep/k)}}.
\]
Suppose, toward a contradiction, that $\gamma_{f,p}=o\bigl(\sqrt{k/\log(ep/k)}\bigr)$. Then, for all sufficiently large $p$,
\[
\zeta_{f,p}\delta_0
=
\frac{\gamma_{f,p}}{\rho_{f,p}}C_0\sqrt{\frac{k\log(ep/k)}p}
<
\frac{c_0(k-1)}{2\rho_{f,p}\sqrt p}
=
\tau_p.
\]
Under $G(p,1/2)$, the output is therefore below $\tau_p$ with probability tending to one, while on every graph containing a $k$-clique it is at least $\rho_{f,p}^{-1}c_0(k-1)/\sqrt p>\tau_p$. By the assumed uniform computability of $\tau_p$, thresholding at this value gives a deterministic polynomial-time algorithm that accepts every graph containing a $k$-clique and rejects a graph drawn from $G(p,1/2)$ with probability tending to one. This contradicts Hypothesis $(H_\varepsilon)$ and proves the claimed lower bound.
\end{proof}

\section{Polynomial-Time Upper Bounds for the Sparse Matrix Norms}\label{sec_sparse_spectral_upper_bound}

This section constructs deterministic polynomial-time lower certificates with distortion $\sqrt{k}$ for the sparse spectral norm and the normalized sparse matrix cut norm.

\begin{proposition}\label{prop_sparse_spectral_upper_bound}
There is a deterministic polynomial-time algorithm $f_{p,k}$ such that, for every symmetric matrix $M\in\mathbb R^{p\times p}$,
\[
\frac{1}{\sqrt{k}}\|M\|_{\mathrm{sp},k}\le f_{p,k}(M)\le\|M\|_{\mathrm{sp},k}.
\]
\end{proposition}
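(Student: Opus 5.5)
The certificate will be built from row-wise greedy supports, and its quality will be controlled through the Frobenius norm. For each $i\in[p]$, let $T_i\subseteq[p]$ consist of $i$ together with the indices of the $\min\{k-1,p-1\}$ largest values of $|M_{ij}|$ over $j\ne i$, with ties broken by index. Define
\[
f_{p,k}(M):=\max_{i\in[p]}\|M_{T_i,T_i}\|_{\mathrm{op}}.
\]
Each $T_i$ is found by sorting one row. Each $\|M_{T_i,T_i}\|_{\mathrm{op}}$ is the largest absolute eigenvalue of a $k\times k$ symmetric matrix. So the algorithm is deterministic and runs in polynomial time, in the same sense that exact evaluation of matrix operator norms is treated as polynomial time in the introduction.

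The upper inequality is immediate. Since $1\le|T_i|\le k$, the maximum formula in \eqref{eq_sparse_spectral_norm} gives $\|M_{T_i,T_i}\|_{\mathrm{op}}\le\|M\|_{\mathrm{sp},k}$ for every $i$.

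For the lower inequality, let $T$ with $|T|\le k$ attain $\|M\|_{\mathrm{sp},k}=\|M_{T,T}\|_{\mathrm{op}}$. The first step bounds the operator norm by the Frobenius norm:
\[
\|M_{T,T}\|_{\mathrm{op}}\le\|M_{T,T}\|_F\le\sqrt{|T|}\max_{i\in T}\|M_{i,T}\|_2\le\sqrt k\max_{i\in T}\|M_{i,T}\|_2 .
\]
The second step compares $\|M_{i,T}\|_2$ with the greedy row norm $a_i:=\|M_{i,T_i}\|_2$. The row segment $M_{i,T}$ consists of the diagonal entry $M_{ii}$ (if $i\in T$) and at most $k-1$ off-diagonal entries. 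By construction, $T_i$ contains $i$ and the $k-1$ largest off-diagonal entries of row $i$ in absolute value, so $\|M_{i,T}\|_2\le a_i$.

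The third step shows $\|M_{T_i,T_i}\|_{\mathrm{op}}\ge a_i$. For a real matrix the operator norm equals $\max_{\|x\|_2=\|y\|_2=1}|x^\top M_{T_i,T_i}y|$. Take $x=e_i$ and $y=M_{T_i,i}/a_i$ when $a_i>0$; this gives $x^\top M_{T_i,T_i}y=a_i$. Chaining the three steps yields $\|M\|_{\mathrm{sp},k}\le\sqrt k\max_i\|M_{T_i,T_i}\|_{\mathrm{op}}=\sqrt k\,f_{p,k}(M)$.

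There is no real obstacle here. The only point needing care is the second step, namely checking that the greedy set $T_i$, which reserves one slot for the diagonal entry, always dominates the row segment of $M$ on an arbitrary $k$-set containing or not containing $i$. That holds because any such segment uses at most $k-1$ off-diagonal entries of row $i$.
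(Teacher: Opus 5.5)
Your proof is correct. It takes a somewhat different route from the paper's, although the backbone is shared: the same greedy supports $T_i$ (the index $i$ plus the $k-1$ largest off-diagonal entries of row $i$), and the same comparison showing that $\|M_{i,T_i}\|_2$ dominates the row of $M$ restricted to the optimal support.

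The two arguments differ at both ends.
\begin{itemize}
\item \textbf{Where the $\sqrt k$ is lost.} The paper applies H\"older's inequality to an optimal $k$-sparse unit vector $u_\star$. It uses $\|u_\star\|_1\le\sqrt k$ to find a row with $|e_i^\top Mu_\star|\ge\|M\|_{\mathrm{sp},k}/\sqrt k$. You instead bound $\|M_{T,T}\|_{\mathrm{op}}$ by the Frobenius norm. Both routes end at the same row-norm quantity, for some $i$ in the optimal support.
\item \textbf{Returning to a quadratic form.} The paper converts the bilinear value $|e_i^\top Mx_i|$ (with $x_i$ the normalized row on $T_i$) by polarization. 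It outputs the maximum of $|y^\top My|$ over the $2p$ explicit candidates $y_{i,\pm}=(e_i\pm x_i)/\|e_i\pm x_i\|_2$. You output $\max_i\|M_{T_i,T_i}\|_{\mathrm{op}}$ and let the equality of bilinear and quadratic operator norms for symmetric matrices do that work.
\end{itemize}

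What each buys: every paper candidate is supported on some $T_i$, so your certificate is pointwise at least as large as the paper's, hence at least as good, and your argument is shorter. The paper's version needs no eigenvalue computation, only sorting and closed-form arithmetic with square roots. It also returns an explicit $k$-sparse witness vector, matching the candidate-vector form of lower certificate discussed earlier in the paper.

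One correction to your closing remark. The domination $\|M_{i,T}\|_2\le a_i$ can fail when $i\notin T$. Take $M_{ii}=0$ and $k$ unit off-diagonal entries in row $i$ indexed by $T$; then $\|M_{i,T}\|_2^2=k>k-1=a_i^2$. Your chain only uses $i\in T$, because the Frobenius bound maximizes over $i\in T$, so the proof itself is unaffected.
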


\begin{proof}
Let $u_\star$ attain $\|M\|_{\mathrm{sp},k}$.
Since $\|u_\star\|_1\le\sqrt{k}$,
\[
\|M\|_{\mathrm{sp},k}=|u_\star^\top M u_\star|\le\|u_\star\|_1\max_{i\in\operatorname{supp}(u_\star)}|e_i^\top M u_\star|.
\]
Consequently, there exists $i\in\operatorname{supp}(u_\star)$ such that
\begin{equation}\label{eq_sparse_upper_cross_term}
|e_i^\top M u_\star|\ge\frac{\|M\|_{\mathrm{sp},k}}{\sqrt{k}}.
\end{equation}

For every $i\in[p]$, let $T_i$ contain $i$ and the indices of the $k-1$ largest off-diagonal entries in absolute value in the $i$th row.
If $\sum_{j\in T_i}M_{ij}^2>0$, define
\[
x_i=\frac{\sum_{j\in T_i}M_{ij}e_j}{\left(\sum_{j\in T_i}M_{ij}^2\right)^{1/2}};
\]
otherwise, set $x_i=e_i$.
The vector $x_i$ is a unit vector supported on $T_i$.
For the index $i$ identified in \eqref{eq_sparse_upper_cross_term}, the choice of $T_i$ and the Cauchy--Schwarz inequality give
\[
|e_i^\top Mx_i|=\left(\sum_{j\in T_i}M_{ij}^2\right)^{1/2}\ge\left(\sum_{j\in\operatorname{supp}(u_\star)}M_{ij}^2\right)^{1/2}\ge|e_i^\top M u_\star|.
\]
Therefore,
\begin{equation}\label{eq_sparse_upper_selected_cross_term}
|e_i^\top Mx_i|\ge\frac{\|M\|_{\mathrm{sp},k}}{\sqrt{k}}.
\end{equation}

For every $i$ and each sign for which $e_i\pm x_i\neq0$, define
\[
y_{i,\pm}=\frac{e_i\pm x_i}{\|e_i\pm x_i\|_2}.
\]
Every defined candidate is supported on $T_i$ and hence is $k$-sparse.
The polarization identity gives
\[
(e_i+x_i)^\top M(e_i+x_i)-(e_i-x_i)^\top M(e_i-x_i)=4e_i^\top Mx_i,
\]
while
\[
\|e_i+x_i\|_2^2+\|e_i-x_i\|_2^2=4.
\]
If both $e_i+x_i$ and $e_i-x_i$ are nonzero, the polarization identity and the squared-norm identity above imply that at least one of $y_{i,+}$ and $y_{i,-}$ satisfies
\[
|y_{i,\pm}^\top M y_{i,\pm}|\ge|e_i^\top Mx_i|.
\]
Indeed, if both normalized quadratic forms were smaller than $|e_i^\top Mx_i|$, the triangle inequality and $\|e_i+x_i\|_2^2+\|e_i-x_i\|_2^2=4$ would contradict the polarization identity. If one of $e_i+x_i$ and $e_i-x_i$ is zero, the other normalized candidate has quadratic-form magnitude exactly $|e_i^\top Mx_i|$. Thus at least one defined candidate satisfies the displayed inequality.

Define $f_{p,k}(M)$ as the largest absolute quadratic form obtained from all nonzero candidates $y_{i,+}$ and $y_{i,-}$.
Every candidate is feasible for \eqref{eq_sparse_spectral_norm}, so
\[
f_{p,k}(M)\le\|M\|_{\mathrm{sp},k}.
\]
For the index satisfying \eqref{eq_sparse_upper_selected_cross_term}, at least one defined candidate gives
\[
f_{p,k}(M)\ge\frac{\|M\|_{\mathrm{sp},k}}{\sqrt{k}}.
\]
Constructing the sets $T_i$ and evaluating the resulting candidates takes polynomial time, which completes the proof.
\end{proof}

\begin{proposition}\label{prop_sparse_cut_upper_bound}
There is a deterministic polynomial-time algorithm $g_{p,k}$ such that, for every real matrix $M\in\mathbb R^{p\times p}$,
\[
\frac{1}{\sqrt{k}}\|M\|_{\square,k}
\le
g_{p,k}(M)
\le
\|M\|_{\square,k}.
\]
\end{proposition}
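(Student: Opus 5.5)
We mirror the proof of Proposition~\ref{prop_sparse_spectral_upper_bound}, now in a bilinear setting where polarization is no longer needed. The algorithm $g_{p,k}$ generates polynomially many feasible pairs $(R,C)$ with $|R|,|C|\le k$. It then returns the largest value of $|\mathbf 1_R^\top M\mathbf 1_C|/\sqrt{|R||C|}$ over these candidates. The upper bound $g_{p,k}(M)\le\|M\|_{\square,k}$ is then automatic from feasibility. Only the lower bound $g_{p,k}(M)\ge\|M\|_{\square,k}/\sqrt{k}$ needs an argument.

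\textbf{Reduction to a single row.} Let $(R_\star,C_\star)$ attain $\|M\|_{\square,k}$ and set $v=\mathbf 1_{C_\star}$. Write
\[
\mathbf 1_{R_\star}^\top Mv=\sum_{i\in R_\star}e_i^\top Mv.
\]
Some $i\in R_\star$ therefore satisfies
\[
|e_i^\top Mv|\ge |\mathbf 1_{R_\star}^\top Mv|/|R_\star|.
\]
Hence, for this $i$,
\[
\frac{|e_i^\top M\mathbf 1_{C_\star}|}{\sqrt{|C_\star|}}
\ge \frac{\sqrt{|R_\star|}}{|R_\star|}\,\|M\|_{\square,k}
=\frac{\|M\|_{\square,k}}{\sqrt{|R_\star|}}
\ge \frac{\|M\|_{\square,k}}{\sqrt k}.
\]
The left side is exactly the objective at the feasible pair $(\{i\},C_\star)$. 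It thus suffices to compute, for every row $i$, the quantity
\[
\max_{1\le|C|\le k}\frac{|\sum_{j\in C}M_{ij}|}{\sqrt{|C|}},
\]
or any value at least as large that is attained by a feasible pair.

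\textbf{Exact single-row maximization.} Fix $i$. Any optimal $C$ can be taken to contain only entries of one sign: removing entries of the opposite sign increases $|\sum_{j\in C} M_{ij}|$ and decreases $|C|$. For a fixed sign and fixed size $m$, the best choice is the $m$ entries largest in that signed direction.

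The algorithm therefore sorts the positive entries of row $i$ in decreasing order and the negative entries in increasing order. For each sign and each $m\le k$, it takes the prefix $C_{i,m}^{\pm}$ of length $m$ and evaluates $|\sum_{j\in C}M_{ij}|/\sqrt m$ at the pair $(\{i\},C_{i,m}^\pm)$. If a row has fewer than $m$ entries of one sign, padding with arbitrary indices is unnecessary, because the bound above only needs the true optimum over sign-homogeneous sets of size at most $k$. The maximum over these $O(pk)$ candidates is at least the single-row optimum, which in turn dominates the value at $(\{i\},C_\star)$.

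\textbf{Cost and main point.} Sorting and prefix sums make the running time $O(p^2\log p)$. The main point to check is the averaging step. Dividing by $|R_\star|$ rather than bounding by $\|\mathbf 1_{R_\star}\|_1$ is what yields exactly the factor $\sqrt{|R_\star|}\le\sqrt k$. Since $\|M\|_{\square,k}$ is attained on a finite set, the existence of $(R_\star,C_\star)$ is immediate. When $M=0$ both sides vanish.
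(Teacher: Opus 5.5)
Your proof is correct and follows the paper's argument: average over the rows of $R_\star$ to isolate a single row at a cost of $\sqrt{|R_\star|}\le\sqrt k$, then take the best of the sorted-prefix candidates $(\{i\},C)$ for each row. The only difference is cosmetic: the paper keeps the cardinality $c=|C_\star|$ fixed and notes that any $c$-subset sum lies between the sums of the $c$ smallest and the $c$ largest entries of the row. That sandwich argument sidesteps your sign-homogeneity reduction and its minor edge cases, namely zero entries and an empty candidate list.
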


\begin{proof}
For each row $i\in[p]$ and each cardinality $1\le c\le k$, let $C_{i,c}^{+}$ index the $c$ largest entries in the $i$th row, and let $C_{i,c}^{-}$ index the $c$ smallest entries. Define
\[
g_{p,k}(M)
=
\max_{\substack{i\in[p],\,1\le c\le k\\ \sigma\in\{+,-\}}}
\frac{\left|\sum_{j\in C_{i,c}^{\sigma}}M_{ij}\right|}{\sqrt c}.
\]
Each candidate is feasible for \eqref{eq_sparse_cut_norm} with $R=\{i\}$, so $g_{p,k}(M)\le\|M\|_{\square,k}$.

Let $(R_\star,C_\star)$ attain $\|M\|_{\square,k}$ and set $r=|R_\star|$ and $c=|C_\star|$. By the triangle inequality, some $i_\star\in R_\star$ satisfies
\[
\left|
\sum_{j\in C_\star}M_{i_\star j}
\right|
\ge
\frac{
\left|
\sum_{i\in R_\star,j\in C_\star}M_{ij}
\right|
}{r}.
\]
For the fixed row $i_\star$ and cardinality $c$, either the sum of the $c$ largest entries or the sum of the $c$ smallest entries has absolute value at least the absolute sum over $C_\star$. Consequently,
\[
g_{p,k}(M)
\ge
\frac1{\sqrt c}
\left|
\sum_{j\in C_\star}M_{i_\star j}
\right|
\ge
\frac{
\left|
\sum_{i\in R_\star,j\in C_\star}M_{ij}
\right|
}{r\sqrt c}
=
\frac{\|M\|_{\square,k}}{\sqrt r}
\ge
\frac{\|M\|_{\square,k}}{\sqrt k}.
\]
Sorting each row and evaluating all prefixes of lengths at most $k$ takes polynomial time.
\end{proof}

\section{Polynomial-Time Upper Bound for the Tensor Cut Norm}\label{sec_tensor_cut_upper_bound}

This section records the randomized polynomial-time lower certificate summarized in Remark~\ref{remark:tensor_cut_upper_gap}.

\begin{proposition}[Known polynomial-time upper bound]\label{prop_tensor_cut_upper_bound}
For every fixed $d\ge2$, there is a randomized polynomial-time lower-certificate algorithm $g_p$ and a constant $C_d>0$ such that, for every order-$d$ tensor $T$, the inequality
\begin{equation}\label{eq_tensor_cut_upper_bound}
\frac1{C_dp^{(d-2)/2}}
\|T\|_{\square,d}
\le
g_p(T)
\le
\|T\|_{\square,d}
\end{equation}
holds with probability at least $2/3$.
\end{proposition}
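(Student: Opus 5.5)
The plan is to obtain the certificate from the Boolean multilinear norm via the equivalence \eqref{eq_tensor_cut_boolean_equivalence}. We then invoke the randomized approximation algorithm of \cite{heApproximationAlgorithmsDiscrete2013} for Boolean multilinear maximization. Alternatively, we can reproduce a self-contained version of that algorithm by induction on $d$.

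\textbf{Base case $d=2$.} We use the Alon--Naor rounding of the Grothendieck SDP \citep{alon2006approximating}. It produces sign vectors $x,y\in\{\pm1\}^p$ with
\[
|x^\top Ty|\ge c\,\max_{x',y'}|x'^\top Ty'|.
\]

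\textbf{Inductive step.} For order $d\ge3$, we flatten or fix one mode. Write
\[
\langle T,x^{(1)}\otimes\cdots\otimes x^{(d)}\rangle=\langle T(\cdot,\ldots,\cdot,x^{(d)}),x^{(1)}\otimes\cdots\otimes x^{(d-1)}\rangle.
\]
The standard argument proceeds as follows. Relax the last mode $x^{(d)}$ to the Euclidean ball of radius $\sqrt p$. This yields a tensor spectral-type problem on the remaining modes. Then reduce that problem recursively, losing a factor $O(\sqrt p)$ per mode beyond two; the rounding of a unit vector $z$ to $\operatorname{sign}$-type Boolean vectors loses $\|z\|_1\ge\ldots$, handled by random sign rounding.

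Concretely, I would show the one-step loss as follows. Given the order-$(d-1)$ matrix slices $T_z:=T(\cdot,\ldots,\cdot,z)$, suppose a Boolean solution $(x^{(1)},\ldots,x^{(d-1)})$ has value $v$ for $T_z$ with $\|z\|_2\le\sqrt p$. Set $w:=T(x^{(1)},\ldots,x^{(d-1)},\cdot)$, so that $v=\langle w,z\rangle$. Choosing $x^{(d)}=\operatorname{sign}(w)$ gives
\[
\langle w,x^{(d)}\rangle=\|w\|_1\ge\|w\|_2\ge v/\sqrt p.
\]
So the loss per mode is $\sqrt p$ relative to the relaxed optimum. The remaining issue is that optimizing over $z$ in the ball is not itself easy. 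Following \cite{heApproximationAlgorithmsDiscrete2013}, I would instead use a matricization: group modes into two sets and apply the $d=2$ Grothendieck step to a $p^{a}\times p^{b}$ flattening. Boolean vectors on $p^{a}$ coordinates that are not rank-one are then decoded to product Boolean vectors by random sampling, using a lemma that a random Gaussian/sign vector preserves a $\Omega(p^{-1/2})$ fraction of the value with constant probability per decoded mode. Iterating gives total loss $C_dp^{(d-2)/2}$, each of the $d-2$ decouplings costing $\sqrt p$.

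\textbf{Certificate.} Define $g_p(T):=2^{-d}p^{-d/2}|\langle T,x^{(1)}\otimes\cdots\otimes x^{(d)}\rangle|$, after first converting the found sign vectors into subsets via the partition argument in \eqref{eq_tensor_cut_boolean_equivalence}. That partition picks among the $2^d$ sign-pattern rectangles the one with largest absolute sum, so the output is realized by actual subsets $I_1,\ldots,I_d$. This guarantees the upper inequality $g_p\le\|T\|_{\square,d}$ deterministically. The lower inequality follows from the $C_dp^{(d-2)/2}$ approximation of $\|T\|_{\infty\to1,d}$ combined with \eqref{eq_tensor_cut_boolean_equivalence}, absorbing $2^d$ into $C_d$.

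\textbf{Probability boost.} The randomized guarantee holds with constant probability. Since each trial outputs a valid lower certificate, we repeat a constant number of times and take the maximum, reaching probability $2/3$.

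\textbf{Main obstacle.} The main obstacle is the decoding step. We must show that a Boolean vector on the flattened index set $[p]^{a}$ can be converted in randomized polynomial time into product sign vectors losing only $O_d(\sqrt p)$ per mode. This is exactly the content of the He--Li--Zhang analysis, which I would cite rather than reprove.
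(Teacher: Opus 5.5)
Your proposal is correct and matches the paper's proof: invoke the randomized $\Omega_d(p^{-(d-2)/2})$ Boolean multilinear approximation of \cite{heApproximationAlgorithmsDiscrete2013}. Then convert to actual subsets by taking the best of the $2^d$ sign-pattern rectangles via \eqref{eq_tensor_cut_boolean_equivalence}, which makes $g_p\le\|T\|_{\square,d}$ hold deterministically, and boost the success probability by repeating and taking the maximum. Your inductive decoupling sketch only unpacks the cited analysis and is not needed. Your two descriptions of $g_p$ (the $2^{-d}$-scaled Boolean value versus the best rectangle) differ slightly, but both are valid lower certificates with the same distortion order.
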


\begin{proof}
Theorem~3.1 and Proposition~4.1 of \cite{heApproximationAlgorithmsDiscrete2013} give a randomized polynomial-time approximation to the Boolean multilinear norm with ratio $\Omega_d(p^{-(d-2)/2})$ in equal dimensions. Combining this result with \eqref{eq_tensor_cut_boolean_equivalence}, we enumerate the $2^d$ sign rectangles determined by the returned sign vectors and output the largest absolute normalized rectangle sum. At least one rectangle has absolute value at least $2^{-d}$ times the returned Boolean value. Since $d$ is fixed, this procedure runs in polynomial time and gives \eqref{eq_tensor_cut_upper_bound}. Repeating independently and taking the largest returned certificate raises the success probability to at least $2/3$ without changing the distortion order.
\end{proof}

\section{Additional lemmas}

\begin{lemma}\label{lemma_existence_1d}
Fix an integer $d\ge3$.
There exists a real-valued sub-Gaussian random variable $X$ whose first $d-1$ cumulants agree with those of a standard Gaussian random variable, while its $d$th cumulant is nonzero.
Equivalently,
\[
\kappa_1(X)=0,\qquad
\kappa_2(X)=1,\qquad
\kappa_j(X)=0\quad\text{for }3\le j\le d-1,
\qquad
\kappa_d(X)\ne0.
\]
Moreover, there exists a constant $C_d\ge1$, depending only on $d$, such that
\[
\mathbb E|H_m(X)|
\le
m!C_d^m,
\qquad
m\ge1,
\]
where $H_m$ denotes the $m$th probabilists' Hermite polynomial.
\end{lemma}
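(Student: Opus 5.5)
We will build $X$ as a small perturbation of the standard Gaussian density. Let $\varphi$ denote the $\mathcal N(0,1)$ density and $H_j$ the probabilists' Hermite polynomials, which are orthogonal in $L^2(\varphi)$ with $\int H_jH_l\varphi=j!\,\delta_{jl}$. We use a compactly supported perturbation so that the density stays positive and sub-Gaussian.

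\textbf{Construction.} Fix a smooth bump $\psi\ge0$ supported on $[-1,1]$ and not identically zero. The finitely many linear functionals
\[
g\mapsto \int x^j g(x)\,dx,\qquad 0\le j\le d,
\]
are linearly independent on $C_c^\infty([-1,1])$, since the monomials are linearly independent there. Hence there is a smooth $g$ supported on $[-1,1]$ such that
\[
\int x^j g(x)\,dx = 0 \quad (0\le j\le d-1),\qquad \int x^d g(x)\,dx=1 .
\]
Set the density of $X$ to be $\varphi+\eta g$, with $\eta>0$ small enough that $\eta\|g\|_\infty<\min_{[-1,1]}\varphi$. This density is nonnegative and integrates to one. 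Its first $d-1$ moments equal the Gaussian moments, and its $d$th moment is the Gaussian $d$th moment plus $\eta$. The density coincides with $\varphi$ outside $[-1,1]$, so its tails are Gaussian and $X$ is sub-Gaussian with $\|X\|_{\psi_2}=O_d(1)$.

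\textbf{Cumulants.} Cumulants $\kappa_j$ are polynomials in the moments $m_1,\ldots,m_j$. Moreover, $\kappa_j=m_j+(\text{polynomial in } m_1,\ldots,m_{j-1})$. So for $j\le d-1$, $\kappa_j(X)$ equals the corresponding Gaussian cumulant: $\kappa_1=0$, $\kappa_2=1$, and $\kappa_j=0$ for $3\le j\le d-1$. Applying the same identity at $j=d$ gives $\kappa_d(X)=0+\eta\ne0$.

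\textbf{Hermite bound.} We split the expectation as
\[
\E|H_m(X)|\le \E|H_m(Z)|+\eta\int_{-1}^1|H_m(x)||g(x)|\,dx,\qquad Z\sim\mathcal N(0,1).
\]
For the first term, Cauchy--Schwarz gives $\E|H_m(Z)|\le\sqrt{m!}\le m!$. For the second term we need a bound on $\sup_{|x|\le1}|H_m(x)|$. The explicit expansion
\[
H_m(x)=\sum_{i}\frac{m!}{i!\,(m-2i)!}\Bigl(-\tfrac12\Bigr)^i x^{m-2i}
\]
gives, for $|x|\le1$,
\[
|H_m(x)|\le m!\sum_i \frac{1}{i!\,2^i}\le e^{1/2}\,m!.
\]
The second term is therefore at most $\eta\|g\|_{L^1}e^{1/2}m!$. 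Combining the two bounds yields $\E|H_m(X)|\le m!\,C_d^m$ with $C_d=1+\eta\|g\|_1e^{1/2}$, where $C_d\ge1$ suffices since $C_d^m\ge C_d$.

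The only delicate point is producing $g$ with the prescribed moments and compact support. The linear-independence argument above handles it.
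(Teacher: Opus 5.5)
Your proof is correct and is essentially the paper's construction: both perturb the Gaussian density by a bounded, compactly supported signed function that annihilates polynomials of degree below $d$ but not $x^d$, and both then read off the cumulants from the matching moments. The differences are only in the sub-steps. The paper builds its perturbation from indicators of small intervals via a near-Vandermonde matrix, and bounds $\E|H_m(X)|$ using $f_X\le\frac32\phi$ plus Cauchy--Schwarz. You instead use linear independence of the moment functionals and a pointwise bound on $H_m$ over $[-1,1]$; note that your choice of $\eta$ already gives a density at most $2\varphi$, so the simpler ratio argument was also available to you.
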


\begin{proof}
Let $Z\sim\mathcal N(0,1)$, let $\phi$ denote its density, and set $H_0\equiv1$.
Choose $d+1$ distinct points $x_0,\ldots,x_d\in\mathbb R$.
Because $H_i$ is a monic polynomial of degree $i$, the matrix
\[
\left(H_i(x_j)\right)_{0\le i,j\le d}
\]
is invertible; indeed, its determinant is the Vandermonde determinant
\[
\prod_{0\le j<k\le d}(x_k-x_j)\ne0.
\]

For sufficiently small $\eta>0$, let
\[
I_j=(x_j-\eta,x_j+\eta),
\qquad
j=0,\ldots,d,
\]
so that the intervals are pairwise disjoint.
Define the $(d+1)\times(d+1)$ matrix $A=(a_{ij})_{0\le i,j\le d}$ by
\[
a_{ij}
=
\mathbb E\left[H_i(Z)\mathbf 1_{\{Z\in I_j\}}\right].
\]
Writing $p_j=\mathbb P(Z\in I_j)>0$, we have
\[
\frac{a_{ij}}{p_j}
=
\mathbb E\left[H_i(Z)\mid Z\in I_j\right]
\longrightarrow
H_i(x_j)
\]
as $\eta\downarrow0$.
Thus, after decreasing $\eta$ if necessary, the matrix $A$ is invertible.

The first $d$ rows of $A$ therefore have rank $d$, so there exists a nonzero vector $c=(c_0,\ldots,c_d)^\top$ such that
\[
\sum_{j=0}^d c_j a_{ij}=0,
\qquad
i=0,\ldots,d-1.
\]
Since $A$ is invertible, this vector necessarily satisfies
\[
\sum_{j=0}^d c_j a_{dj}\ne0.
\]
After rescaling, assume that $\max_{0\le j\le d}|c_j|=1$, and define
\[
h(x)
=
\sum_{j=0}^d c_j\mathbf 1_{\{x\in I_j\}}.
\]
Because the intervals are pairwise disjoint, $\|h\|_\infty=1$, and the preceding identities give
\[
\mathbb E\left[H_i(Z)h(Z)\right]=0,
\qquad
i=0,\ldots,d-1,
\]
whereas
\[
\mathbb E\left[H_d(Z)h(Z)\right]\ne0.
\]

Define $X$ by the density
\[
f_X(x)
=
\left(1+\frac12h(x)\right)\phi(x).
\]
The identity $\mathbb E h(Z)=0$, corresponding to $i=0$, shows that $f_X$ integrates to one.
Moreover, since $\|h\|_\infty=1$,
\[
\frac12\phi(x)
\le
f_X(x)
\le
\frac32\phi(x),
\qquad
x\in\mathbb R.
\]
In particular,
\[
\mathbb E\exp\left(\frac{X^2}{9}\right)
\le
\frac32\mathbb E\exp\left(\frac{Z^2}{9}\right)
=
\frac{9}{2\sqrt7}
<
2,
\]
so $X$ is sub-Gaussian.

For every $1\le i\le d-1$, the orthogonality relations above and the Gaussian Hermite identities give
\[
\mathbb E H_i(X)
=
\mathbb E H_i(Z)
+
\frac12\mathbb E\left[H_i(Z)h(Z)\right]
=
0.
\]
On the other hand,
\[
\mathbb E H_d(X)
=
\frac12\mathbb E\left[H_d(Z)h(Z)\right]
\ne
0.
\]
Because $H_0,\ldots,H_r$ form a basis of the polynomials of degree at most $r$, the identities $\mathbb E H_i(X)=\mathbb E H_i(Z)$ for $0\le i\le d-1$ imply that the moments of $X$ and $Z$ agree through order $d-1$.
Consequently, the first $d-1$ cumulants of $X$ agree with the Gaussian cumulants.
The moment--cumulant relation then gives
\[
\kappa_d(X)
=
\mathbb E H_d(X)
\ne
0.
\]

Finally, the density bound and the Hermite orthogonality relation $\mathbb E H_m(Z)^2=m!$ imply
\[
\mathbb E|H_m(X)|
\le
\frac32\mathbb E|H_m(Z)|
\le
\frac32\left(\mathbb E H_m(Z)^2\right)^{1/2}
=
\frac32\sqrt{m!}.
\]
For every $m\ge1$,
\[
\frac32\sqrt{m!}
\le
m!\left(\frac32\right)^m.
\]
Thus the asserted bound holds, for example, with $C_d=3/2$.
\end{proof}

\section{Proofs}

\subsection{The cumulant reverse gap and proof of Theorem~\ref{thm_main}}\label{sec_proof}

\begin{proof}[Proof of Proposition~\ref{prop_cumulant_reverse_gap} and Theorem~\ref{thm_main}]
For the testing family defined in Section~\ref{sec:tensor}, Lemma~1 of \cite{tang2026detectionharderestimationcertain} gives, for $M=o(\min\{p,n\})$,
\begin{equation}\label{eq_ldlr_bound}
\bigl\|(L_p)_{\le M}\bigr\|_{L^2(\mathbb Q_p)}^2
\le
\sum_{0\le m\le M}
\left(
\frac{a}{1+a}\frac{C_d m^4n^{1/d}}{p^{1/2}}
\right)^m.
\end{equation}
Fix an arbitrary constant $c>0$ and let $M=\lfloor c(\log p)^2\rfloor$. Then $M=o(\min\{p,n\})$ and, uniformly over $m\le M$,
\[
\frac{a}{1+a}\frac{C_d m^4n^{1/d}}{p^{1/2}}
\le
\frac{C_d m^4}{(\log p)^9}
\le
C_{d,c}' (\log p)^{-1}
=o(1).
\]
Consequently, the sum in \eqref{eq_ldlr_bound} is $O(1)$ for every fixed $c>0$, which proves the low-degree claim of Proposition~\ref{prop_cumulant_reverse_gap}. Taking $c=c_d$, the constant in Conjecture~\ref{conj_low_degree}, shows that \eqref{eq_tensor_ldlr_conjecture_antecedent} holds. Conjecture~\ref{conj_low_degree} therefore gives the ordinary strong-detection hardness condition \eqref{eq_tensor_polytime_strong_hardness} for this testing family.

We next record the separation and the estimation radius. Conditional on $U$, the $d$th cumulant tensor under the alternative is
\[
\bcK_d(Y_i\mid U)
=
\kappa_d(W_1)\left(\frac{a}{p}\right)^{d/2}(S_UU)^{\otimes d},
\]
where $\kappa_d(W_1)\ne0$ depends only on $d$. Since $\|S_UU\|_2^2=p/(1+a)$, the signal norm is
\begin{equation}\label{eq_signal_scale}
s_p
=
\|\bcK_d(Y_i\mid U)\|_s
=
|\kappa_d(W_1)|\left(\frac{a}{1+a}\right)^{d/2}
\asymp_d
\frac{p^{d/4}}{n^{1/2}(\log p)^{9d/2}}.
\end{equation}
This value does not depend on the realization of $U$. We take $\mu_p=0$ under $\mathbb Q_p$ and $\mu_p=\bcK_d(Y_i\mid U)$ under $\mathbb P_p$, so \eqref{eq_general_separation} holds with the value $s_p$ in \eqref{eq_signal_scale}.

Let $\widehat{\bcM}_j=n^{-1}\sum_{i=1}^nY_i^{\otimes j}$ and let $\widehat{\bcK}_d$ be the plug-in sample cumulant. The moment--cumulant formula expresses $\widehat{\bcK}_d$ as a finite sum over set partitions $\pi$ of $[d]$:
\begin{equation}\label{eq_sample_cumulant_partition}
\widehat{\bcK}_d
=
\sum_{\pi\in\Pi_d}
(|\pi|-1)!(-1)^{|\pi|-1}
\bigotimes_{B\in\pi}\widehat{\bcM}_{|B|},
\end{equation}
with the tensor indices assigned according to the blocks of $\pi$ and $\Pi_d$ denoting the set of all set partitions of $[d]$. For every partition, $\sum_{B\in\pi}|B|=d$. Hence every coordinate of $\widehat{\bcK}_d$ is a polynomial of total degree at most $d$ in the observed samples.
Because $d$ is fixed, \eqref{eq_sample_cumulant_partition} also shows that $\widehat{\bcK}_d$ is computable in polynomial time.

Conditional on $U$, the observations under $\mathbb P_p$ are independent, and the sub-Gaussian constants needed below are uniform in $U$ because $a=o(1)$, $d$ is fixed, and $\|S_U\|_{\mathrm{op}}\le1$. Theorem~4 and the cumulant discussion following it in \cite{tang2026detectionharderestimationcertain} apply under $\mathbb Q_p$ and, conditionally on $U$, under $\mathbb P_p$. They give the following bound with probability tending to one; under $\mathbb P_p$, the convergence is uniform in $U$:
\[
\|\widehat{\bcK}_d-\mu_p\|_s
\le
C_d\max\left\{\frac{p^{d/2}}{n},\sqrt{\frac pn}\right\}.
\]
Because $n<p^{d-1}$, the first term dominates. Thus we may take
\begin{equation}\label{eq_estimation_scale_cumulant}
r_p=C_d\frac{p^{d/2}}{n}
\end{equation}
in \eqref{eq_estimation_radius}, after integrating the conditional probability bound over $U$ under $\mathbb P_p$.

Equations~\eqref{eq_ldlr_bound}, \eqref{eq_signal_scale}, and~\eqref{eq_estimation_scale_cumulant} prove Proposition~\ref{prop_cumulant_reverse_gap}. We now apply the general reduction.

Let $f_p$ be as in Theorem~\ref{thm_main}.
Since $\widehat{\bcK}_d$ and $f_p$ are uniformly polynomial-time computable and the ratios $\tau_p:=s_p/(2\rho_{f,p})$ are uniformly polynomial-time computable by hypothesis, the tests $\phi_p=I_{\{f_p(\widehat{\bcK}_d)>\tau_p\}}$ are uniform polynomial-time tests.

If $\gamma_{f,p}<s_p/(4r_p)$ for all sufficiently large $p$, the null and alternative inequalities in the proof of Theorem~\ref{thm_framework} give
\[
\mathbb Q_p(\phi_p=1)+\mathbb P_p(\phi_p=0)\longrightarrow0.
\]
This contradicts \eqref{eq_tensor_polytime_strong_hardness}. Hence
\[
\limsup_{p\to\infty}\frac{\gamma_{f,p}r_p}{s_p}\ge\frac14.
\]
Combining this inequality with \eqref{eq_signal_scale}, \eqref{eq_estimation_scale_cumulant}, and $n\asymp p^{d-1}$ from \eqref{eq_choice_n_a} gives
\[
\limsup_{p\to\infty}
\gamma_{f,p}\frac{(\log p)^{9d/2}}{p^{d/4-1/2}}>0.
\]
This completes the proof.
\end{proof}

\subsection{Proof of Corollary \ref{cor_tensor_nuclear_lower_bound}}\label{sec:tensor_nuclear_duality}

\begin{proof}
Set $f_p=g_p^\circ$. The inequalities in \eqref{eq_tensor_nuclear_approximation} imply the unit-ball inclusions
\[
\left\{\bcT:\|\bcT\|_*\le\frac1{\zeta_{g,p}}\right\}
\subseteq
\left\{\bcT:g_p(\bcT)\le1\right\}
\subseteq
\left\{\bcT:\|\bcT\|_*\le\rho_{g,p}\right\}.
\]
Taking support functions and using \eqref{eq_tensor_spectral_nuclear_duality} gives, for every $\bcS\in\operatorname{Sym}^d(\mathbb R^p)$,
\begin{equation*}
\frac1{\zeta_{g,p}}\|\bcS\|_s
\le
f_p(\bcS)
\le
\rho_{g,p}\|\bcS\|_s.
\end{equation*}
Thus $f_p$ approximates the tensor spectral norm with factors
\[
\rho_{f,p}=\zeta_{g,p},
\qquad
\zeta_{f,p}=\rho_{g,p},
\qquad
\gamma_{f,p}=\gamma_{g,p}.
\]
By efficient dualizability, $(f_p)_{p\ge1}$ is generated by a single uniform deterministic polynomial-time algorithm. Its separating threshold in Theorem~\ref{thm_main} is
\[
\frac{s_p^{\mathrm{cum}}}{2\rho_{f,p}}
=
\frac{s_p^{\mathrm{cum}}}{2\zeta_{g,p}},
\]
which is uniformly polynomial-time computable by assumption. Applying Theorem~\ref{thm_main} proves \eqref{eq_tensor_nuclear_lower_bound}.
\end{proof}

\subsection{Proof of Lemma~\ref{lem_sparse_spectral_norm}}\label{sec_proof_lem_sparse_spectral_norm}

\begin{proof}
Absolute homogeneity and the triangle inequality follow directly from \eqref{eq_sparse_spectral_norm}.
It remains to verify definiteness.
Suppose that $\|M\|_{\mathrm{sp},k}=0$.
Taking $u=e_i$ gives $M_{ii}=0$ for every $i\in[p]$.
For distinct $i,j\in[p]$, the vector $u=(e_i+e_j)/\sqrt2$ is two-sparse, so
\[
0=u^\top M u=\frac{M_{ii}+M_{jj}+2M_{ij}}{2}=M_{ij}.
\]
Thus every entry of $M$ is zero.
\end{proof}

\subsection{Proof of Lemma~\ref{lem_sparse_cut_norm}}\label{sec_proof_lem_sparse_cut_norm}

\begin{proof}
Absolute homogeneity and the triangle inequality follow directly from \eqref{eq_sparse_cut_norm}. If $\|M\|_{\square,k}=0$, then taking $R=\{i\}$ and $C=\{j\}$ gives $|M_{ij}|=0$ for every $i,j\in[p]$. Hence $M=0$.
\end{proof}

\subsection{Proof of Theorem \ref{thm_sparse_norm_concentration}}\label{sec_proof_thm_sparse_norm_concentration}

We prove for the two norms separately. 

\begin{lemma}[Concentration of the sparse spectral norm]\label{lem_sparse_spectral_concentration}
There exist universal constants $C,c>0$ such that, under $\mathbb Q_p^{\mathrm{PC}}$ and under $\mathbb P_p^{\mathrm{PC}}$ conditional on any planted set $S$,
\begin{equation}\label{eq_sparse_spectral_concentration}
\mathbb P\left(
\|\widehat\mu-\mu\|_{\mathrm{sp},k}
>
C\sqrt{k\log\left(\frac{ep}{k}\right)}
\right)
\le
2\exp\left(
-ck\log\left(\frac{ep}{k}\right)
\right).
\end{equation}
Here $\mu=0$ under the null and $\mu=\mu_S$ under the alternative.
\end{lemma}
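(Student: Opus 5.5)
Write $E=\widehat\mu-\mu$. Under $\mathbb Q_p^{\mathrm{PC}}$, and under $\mathbb P_p^{\mathrm{PC}}$ conditional on $S$, $E$ is a symmetric matrix with zero diagonal. Its entries $E_{ij}$ for $i<j$ are independent and centered, with $|E_{ij}|\le 1/2$. Under the alternative, the entries with $i,j\in S$ are identically zero, which only helps. Each $E_{ij}$ is therefore sub-Gaussian with $\|E_{ij}\|_{\psi_2}\le C_1$ uniformly, and every bound below is uniform in $S$. I would prove the inequality by a net argument on each support, followed by a union bound over supports. By the second expression in \eqref{eq_sparse_spectral_norm}, it suffices to control $\max_{|T|=k}\|E_{T,T}\|_{\mathrm{op}}$; taking $|T|=k$ exactly is allowed since the operator norm of a principal submatrix is monotone in the index set.

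\emph{Fixed support.} Fix $T$ with $|T|=k$. Let $\mathcal N_T$ be a $1/4$-net of the unit sphere of $\mathbb R^T$ with $|\mathcal N_T|\le 9^k$. The standard net bound for symmetric matrices gives
\[
\|E_{T,T}\|_{\mathrm{op}}\le 2\max_{x\in\mathcal N_T}|x^\top E_{T,T}x|.
\]
For a fixed unit vector $x$,
\[
x^\top E x=2\sum_{i<j,\ i,j\in T}x_ix_jE_{ij}
\]
is a sum of independent bounded centered terms with variance proxy $4\sum_{i<j}x_i^2x_j^2\le 2$. Hoeffding's inequality then gives
\[
\mathbb P(|x^\top Ex|>t)\le 2e^{-t^2/8}.
\]

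\emph{Union bound.} There are $\binom pk\le (ep/k)^k$ supports, so the total number of test vectors is at most $\exp\{k\log(ep/k)+k\log 9\}$. Set $t=C'\sqrt{k\log(ep/k)}$. Since $\log(ep/k)\ge1$, the $k\log 9$ term is absorbed, and
\[
\mathbb P\Bigl(\|E\|_{\mathrm{sp},k}>2C'\sqrt{k\log(ep/k)}\Bigr)\le 2\exp\{-(C'^2/8-1-\log 9)\,k\log(ep/k)\}.
\]
Choosing $C'$ large yields \eqref{eq_sparse_spectral_concentration} with universal $C,c$.

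For the cut-norm half of Theorem~\ref{thm_sparse_norm_concentration}, the same argument is simpler because no net is needed. For fixed $R,C$, the quantity $\mathbf 1_R^\top E\mathbf 1_C/\sqrt{|R||C|}$ is a sum of independent bounded terms. Each unordered pair appears at most twice, so the variance proxy is $O(1)$. There are at most $(ep/k)^{2k}\cdot k^2$ pairs $(R,C)$, obtained by summing over sizes, and a union bound as above gives the same tail. The main obstacle is bookkeeping rather than substance: handling the symmetric dependence, where $E_{ij}=E_{ji}$ appears twice, and keeping the constants universal while absorbing the net cardinality and the size summation into $k\log(ep/k)$. Both are routine, since $k\log(ep/k)\ge k\ge \log k$.
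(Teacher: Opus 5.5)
Your proposal is correct and follows the paper's proof essentially step for step. Both arguments apply Hoeffding's inequality to $x^\top E x=2\sum_{i<j}x_ix_jE_{ij}$ using $\sum_{i<j}x_i^2x_j^2\le 1/2$, use a $1/4$-net of size $9^{k}$ on each support with the factor-$2$ symmetric net bound, and then take a union bound absorbed into $k\log(ep/k)$. The only differences are cosmetic. You reduce to supports of size exactly $k$ via monotonicity of principal-submatrix operator norms, whereas the paper sums $\binom{p}{j}9^j$ over $j\le k$. Your Hoeffding constant $e^{-t^2/8}$ is conservative but valid.
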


\begin{lemma}[Concentration of the sparse cut norm]\label{lem_sparse_cut_concentration}
There exist universal constants $C,c>0$ such that, under $\mathbb Q_p^{\mathrm{PC}}$ and under $\mathbb P_p^{\mathrm{PC}}$ conditional on any planted set $S$,
\begin{equation}\label{eq_sparse_cut_concentration}
\mathbb P\left(
\|\widehat\mu-\mu\|_{\square,k}
>
C\sqrt{k\log\left(\frac{ep}{k}\right)}
\right)
\le
2\exp\left(
-ck\log\left(\frac{ep}{k}\right)
\right).
\end{equation}
\end{lemma}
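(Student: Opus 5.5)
We prove Lemma~\ref{lem_sparse_cut_concentration} by a direct union bound over all pairs of supports, without any net. Write $E=\widehat\mu-\mu$. Under $\mathbb Q_p^{\mathrm{PC}}$, and under $\mathbb P_p^{\mathrm{PC}}$ conditional on $S$, the entries $E_{ij}$ with $i<j$ are independent. Each is either identically zero (both endpoints in $S$, or $i=j$) or a centered $\pm1/2$ variable, and $E$ is symmetric with zero diagonal. Now fix sets $R,C$ with $|R|=r\le k$ and $|C|=c\le k$. The bilinear form $\mathbf 1_R^\top E\mathbf 1_C=\sum_{i<j}(\mathbf 1_{i\in R}\mathbf 1_{j\in C}+\mathbf 1_{j\in R}\mathbf 1_{i\in C})E_{ij}$ is a sum of independent bounded centered terms. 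Each coefficient lies in $\{0,1,2\}$, and the sum of squared coefficients is at most $4|R||C|$. Hoeffding's inequality therefore gives
\[
\mathbb P\left(\frac{|\mathbf 1_R^\top E\mathbf 1_C|}{\sqrt{rc}}>t\right)\le 2\exp\left(-\frac{t^2}{2}\right),
\]
with a universal constant in the exponent after accounting for the range $[-1/2,1/2]$. Note that the normalization by $\sqrt{rc}$ exactly cancels the variance proxy, so the tail is uniform over all sizes $r,c$.

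Next we count pairs. The number of pairs $(R,C)$ with $1\le|R|,|C|\le k$ is at most $\bigl(\sum_{r\le k}\binom pr\bigr)^2\le (ep/k)^{2k}=\exp\{2k\log(ep/k)\}$, using the standard bound $\sum_{r\le k}\binom pr\le(ep/k)^k$. Take $t=C\sqrt{k\log(ep/k)}$ with $C$ large. The union bound then gives a failure probability of at most
\[
2\exp\left\{2k\log(ep/k)-\frac{C^2}{2}k\log(ep/k)\right\}\le 2\exp\{-ck\log(ep/k)\},
\]
for example with $C^2/2\ge 2+c$. This proves \eqref{eq_sparse_cut_concentration}. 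The same argument covers the null ($S=\emptyset$ in effect) and every conditional alternative, because the only feature used is independence and boundedness of the nonzero centered entries.

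For completeness, Lemma~\ref{lem_sparse_spectral_concentration}, and hence Theorem~\ref{thm_sparse_norm_concentration}, is handled the same way by a per-support argument. For each $T$ with $|T|=k$ we bound $\|E_{T,T}\|_{\mathrm{op}}$ by a $1/4$-net of $\mathbb S^{k-1}$ of size $9^k$, applying Hoeffding to $u^\top E_{T,T}u$, whose variance proxy is at most $\|u\|_2^4=1$. We then union bound over the $\binom pk$ supports, which costs an extra $9^k$ factor absorbed into $(ep/k)^{k}$. Supports smaller than $k$ are contained in size-$k$ supports, so they need no separate treatment.

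I expect no real obstacle here. The only step needing care is confirming that the variance proxy is $O(|R||C|)$ even when $R$ and $C$ overlap. Overlap causes the symmetric entries $E_{ij}$ and $E_{ji}$ to be counted twice, which is exactly why the coefficient can reach $2$, and this only inflates constants.
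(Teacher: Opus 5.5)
Your proposal is correct and follows the paper's proof: Hoeffding for a fixed pair $(R,C)$, where the $\sqrt{rc}$ normalization makes the variance proxy $O(1)$ uniformly in the sizes, followed by a union bound over at most $\exp\{O(k\log(ep/k))\}$ pairs, with no net needed. The only differences are cosmetic. You bound the squared coefficients by $4rc$ where the paper uses $2rc$, and you track explicit constants.
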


\subsubsection{Proof of Lemma~\ref{lem_sparse_spectral_concentration}}\label{sec_proof_lem_sparse_spectral_concentration}

\begin{proof}
Let $W=\widehat\mu-\mu$.
Under the null, the entries $W_{ij}$ for $i<j$ are independent random variables taking the values $1/2$ and $-1/2$ with equal probability.
Under the alternative, conditional on $S$, the entries within $S\times S$ are zero, while the remaining upper-triangular entries are independent, centered, and bounded in absolute value by $1/2$.

For any fixed unit vector $u$ supported on at most $k$ coordinates,
\[
u^\top W u
=
2\sum_{i<j}u_i u_j W_{ij}.
\]
Since
\[
\sum_{i<j}u_i^2u_j^2
=
\frac12\left(1-\sum_i u_i^4\right)
\le
\frac12,
\]
Hoeffding's inequality gives a universal constant $c_0>0$ such that
\begin{equation}\label{eq_fixed_sparse_quadratic_tail}
\mathbb P\left(
|u^\top W u|>t
\right)
\le
2\exp(-c_0t^2).
\end{equation}

For each nonempty set $T\subseteq[p]$ with $|T|\le k$, let $\mathcal N_T$ be a $1/4$-net of the unit sphere supported on $T$.
The net can be chosen so that $|\mathcal N_T|\le9^{|T|}$.
The standard net bound for the operator norm of a symmetric matrix gives
\[
\|W_{T,T}\|_{\mathrm{op}}
\le
2\max_{u\in\mathcal N_T}|u^\top W u|.
\]
Moreover, for a universal constant $C_0>0$,
\[
\sum_{j=1}^k\binom pj9^j
\le
\exp\left(
C_0k\log\left(\frac{ep}{k}\right)
\right).
\]
Combining this counting bound with \eqref{eq_fixed_sparse_quadratic_tail} and taking a union bound yields
\[
\mathbb P\left(
\|W\|_{\mathrm{sp},k}>2t
\right)
\le
2\exp\left(
C_0k\log\left(\frac{ep}{k}\right)-c_0t^2
\right).
\]
Choosing $t=C_1\sqrt{k\log(ep/k)}$ for a sufficiently large universal constant $C_1$ proves \eqref{eq_sparse_spectral_concentration}.
\end{proof}

\subsubsection{Proof of Lemma~\ref{lem_sparse_cut_concentration}}\label{sec_proof_lem_sparse_cut_concentration}

\begin{proof}
Let $W=\widehat\mu-\mu$. Under the null, the entries $W_{ij}$ for $i<j$ are independent Rademacher variables scaled by $1/2$. Under the alternative, conditional on $S$, entries within $S\times S$ are zero and the remaining upper-triangular entries are independent, centered, and bounded in absolute value by $1/2$.

Fix nonempty $R,C\subseteq[p]$, and write $r=|R|$ and $c=|C|$. Since $W$ is symmetric with zero diagonal,
\[
\frac{\mathbf 1_R^\top W\mathbf 1_C}{\sqrt{rc}}
=
\sum_{i<j}
a_{ij}W_{ij},
\qquad
a_{ij}
=
\frac{
\mathbf 1_R(i)\mathbf 1_C(j)
+
\mathbf 1_R(j)\mathbf 1_C(i)
}{\sqrt{rc}}.
\]
Using $(x+y)^2\le2x^2+2y^2$,
\[
\sum_{i<j}a_{ij}^2
\le
\frac2{rc}
\sum_{i<j}
\left[
\mathbf 1_R(i)\mathbf 1_C(j)
+
\mathbf 1_R(j)\mathbf 1_C(i)
\right]
\le
2.
\]
Hoeffding's inequality therefore gives universal $c_0>0$ such that
\begin{equation}\label{eq_fixed_sparse_cut_tail}
\mathbb P\left(
\frac{|\mathbf 1_R^\top W\mathbf 1_C|}{\sqrt{rc}}>t
\right)
\le
2\exp(-c_0t^2).
\end{equation}

The number of ordered pairs $(R,C)$ with $1\le|R|,|C|\le k$ is at most
\[
\left(
\sum_{j=1}^k\binom pj
\right)^2
\le
\exp\left(
C_0k\log\left(\frac{ep}{k}\right)
\right)
\]
for a universal constant $C_0$. A union bound in \eqref{eq_fixed_sparse_cut_tail} yields
\[
\mathbb P\left(
\|W\|_{\square,k}>t
\right)
\le
2\exp\left(
C_0k\log\left(\frac{ep}{k}\right)-c_0t^2
\right).
\]
Taking $t=C_1\sqrt{k\log(ep/k)}$ for a sufficiently large universal constant $C_1$ proves \eqref{eq_sparse_cut_concentration}.
\end{proof}

\subsection{Proof of Theorem~\ref{thm_sparse_spectral_planted_clique}}\label{sec_proof_thm_sparse_spectral_planted_clique}

\begin{proof}
If $f_{p,k}$ is randomized, repeat it independently $C\log p$ times and take the median of its outputs.
A Chernoff bound shows that the amplified algorithm satisfies \eqref{eq_sparse_spectral_approximation} with probability $1-o(1)$ for every input, without changing its approximation ratios or its polynomial running time.

Suppose, toward a contradiction, that
\[
\gamma_{f,p}
<
\frac{s_p}{4r_p},
\]
for all sufficiently large $p$, where $s_p=(k-1)/2$ and $r_p$ is given by \eqref{eq_planted_clique_estimation_radius}.

Since $k$ is public and $\tau_{p,k}$ is uniformly polynomial-time computable from $(p,k)$, the following is a uniform randomized polynomial-time test:
\begin{equation}\label{eq_planted_clique_induced_test}
\phi_p=I_{\left\{f_{p,k}(\widehat\mu)>\tau_{p,k}\right\}}.
\end{equation}

Under $\mathbb Q_p^{\mathrm{PC}}$, Lemma~\ref{lem_sparse_spectral_concentration} gives $\|\widehat\mu\|_{\mathrm{sp},k}\le r_p$ with probability tending to one.
On this event and on the success event of the amplified approximation algorithm,
\[
f_{p,k}(\widehat\mu)
\le
\zeta_{f,p}r_p
=
\frac{\gamma_{f,p}r_p}{\rho_{f,p}}
<
\frac{s_p}{4\rho_{f,p}}
<
\frac{s_p}{2\rho_{f,p}}.
\]
Therefore, $\mathbb Q_p^{\mathrm{PC}}(\phi_p=1)\to0$.

Under $\mathbb P_p^{\mathrm{PC}}$, Lemma~\ref{lem_sparse_spectral_concentration} and the triangle inequality give
\[
\|\widehat\mu\|_{\mathrm{sp},k}
\ge
\|\mu_S\|_{\mathrm{sp},k}
-
\|\widehat\mu-\mu_S\|_{\mathrm{sp},k}
\ge
s_p-r_p
\]
with probability tending to one.
Since $\gamma_{f,p}\ge1$, the assumed inequality implies $r_p<s_p/4$ for all sufficiently large $p$.
Consequently, on the preceding concentration event and on the success event of the amplified approximation algorithm,
\[
f_{p,k}(\widehat\mu)
\ge
\frac{\|\widehat\mu\|_{\mathrm{sp},k}}{\rho_{f,p}}
>
\frac{3s_p}{4\rho_{f,p}}
>
\frac{s_p}{2\rho_{f,p}}.
\]
Therefore, $\mathbb P_p^{\mathrm{PC}}(\phi_p=0)\to0$.

The test \eqref{eq_planted_clique_induced_test} strongly distinguishes $\mathbb P_p^{\mathrm{PC}}$ from $\mathbb Q_p^{\mathrm{PC}}$, contradicting Assumption~\ref{assump_planted_clique}. Therefore,
\[
\limsup_{p\to\infty}
\frac{\gamma_{f,p}r_p}{s_p}
\ge
\frac14.
\]
Combining this with \eqref{eq_planted_clique_reverse_gap} proves \eqref{eq_sparse_spectral_lower_bound}.
\end{proof}

\subsection{Proof of Theorem~\ref{thm_sparse_cut_planted_clique}}\label{sec_proof_thm_sparse_cut_planted_clique}

\begin{proof}
If $g_{p,k}$ is randomized, repeat it independently $C\log p$ times and take the median of its outputs. A Chernoff bound makes the amplified approximation guarantee hold with probability $1-o(1)$ for every input, without changing the approximation factors or the polynomial running time.

Suppose, toward a contradiction, that
\[
\gamma_{g,p}
<
\frac{s_p}{4r_p}
\]
for all sufficiently large $p$, where $s_p=(k-1)/2$ and $r_p$ is given in \eqref{eq_planted_clique_estimation_radius}. Since $k$ is public and $\tau_{p,k}^{\square}=s_p/(2\rho_{g,p})$ is uniformly polynomial-time computable, define the uniform randomized polynomial-time test
\[
\phi_p
=
I_{\{g_{p,k}(\widehat\mu)>\tau_{p,k}^{\square}\}}.
\]

Under $\mathbb Q_p^{\mathrm{PC}}$, Lemma~\ref{lem_sparse_cut_concentration} gives $\|\widehat\mu\|_{\square,k}\le r_p$ with probability tending to one. On this event and on the success event of the amplified approximation algorithm,
\[
g_{p,k}(\widehat\mu)
\le
\zeta_{g,p}r_p
=
\frac{\gamma_{g,p}r_p}{\rho_{g,p}}
<
\frac{s_p}{4\rho_{g,p}}
<
\tau_{p,k}^{\square}.
\]
Thus $\mathbb Q_p^{\mathrm{PC}}(\phi_p=1)\to0$.

Under $\mathbb P_p^{\mathrm{PC}}$, Lemma~\ref{lem_sparse_cut_concentration}, the triangle inequality, and \eqref{eq_planted_clique_cut_signal} give
\[
\|\widehat\mu\|_{\square,k}
\ge
\|\mu_S\|_{\square,k}
-
\|\widehat\mu-\mu_S\|_{\square,k}
\ge
s_p-r_p
\]
with probability tending to one. Since $\gamma_{g,p}\ge1$, the assumed inequality implies $r_p<s_p/4$ for all sufficiently large $p$. Hence, on the preceding concentration event and on the success event of the amplified algorithm,
\[
g_{p,k}(\widehat\mu)
\ge
\frac{\|\widehat\mu\|_{\square,k}}{\rho_{g,p}}
>
\frac{3s_p}{4\rho_{g,p}}
>
\tau_{p,k}^{\square}.
\]
Thus $\mathbb P_p^{\mathrm{PC}}(\phi_p=0)\to0$, contradicting Assumption~\ref{assump_planted_clique}. Therefore
\[
\limsup_{p\to\infty}
\frac{\gamma_{g,p}r_p}{s_p}
\ge
\frac14.
\]
Combining this with \eqref{eq_planted_clique_reverse_gap} proves \eqref{eq_sparse_cut_lower_bound}.
\end{proof}

\subsection{Proofs for the tensor cut-norm reverse gap}\label{sec_proofs_tensor_cut_reverse_gap}

\begin{proof}[Proof of Lemma~\ref{lem_tensor_cut_norm}]
Absolute homogeneity and the triangle inequality are immediate. To verify definiteness, fix arbitrary indices $i_1,\ldots,i_d$ and take $I_j=\{i_j\}$ for every $j$. If $\|T\|_{\square,d}=0$, the corresponding rectangle sum $T_{i_1,\ldots,i_d}$ is zero. Hence every tensor entry is zero.
\end{proof}

\begin{proof}[Proof of Lemma~\ref{lem_tensor_cut_signal}]
For each mode $j$, one of the sets $\{i:u_i^{(j)}=p^{-1/2}\}$ and $\{i:u_i^{(j)}=-p^{-1/2}\}$ has cardinality at least $p/2$. Choose $I_j$ to be such a set. Then $\left|\sum_{i\in I_j}u_i^{(j)}\right|\ge\sqrt p/2$.
The rectangle sum factorizes across modes, giving
\[
p^{-d/2}\lambda_p
\prod_{j=1}^d
\left|
\sum_{i\in I_j}u_i^{(j)}
\right|
\ge
2^{-d}\lambda_p.
\]
\end{proof}

\begin{proof}[Proof of Lemma~\ref{lem_tensor_cut_noise}]
For a fixed rectangle $(I_1,\ldots,I_d)$, its normalized Gaussian sum has variance
\[
p^{-d}\prod_{j=1}^d|I_j|
\le
1.
\]
There are at most $2^{dp}$ rectangles. Therefore, for every $t>0$,
\[
\mathbb P\left(
\|Z\|_{\square,d}>t
\right)
\le
2^{dp+1}\exp(-t^2/2).
\]
Taking $t=C_d\sqrt p$ with $C_d$ sufficiently large proves the claim.
\end{proof}

\begin{proof}[Proof of Proposition~\ref{prop_tensor_cut_ldlr}]
For an additive Gaussian mixture, the Hermite expansion gives
\[
\left\|(L_p^{\mathrm{TC}})_{\le D}\right\|_{L^2(\mathbb Q_p^{\mathrm{TC}})}^2
=
\sum_{m=0}^D
\frac1{m!}
\E_{\mu,\mu'}
\langle\mu,\mu'\rangle^m,
\]
where $\mu$ and $\mu'$ are independent draws from the spike prior. Write $v^{(j)}$ for the $j$th factor of $\mu'$. Since
\[
\langle\mu,\mu'\rangle
=
\lambda_p^2
\prod_{j=1}^d
\langle u^{(j)},v^{(j)}\rangle
\]
and the $d$ overlaps are independent copies of $R$, \eqref{eq_tensor_cut_ldlr_exact} follows.

The variable $R$ is centered and $p^{-1/2}$-sub-Gaussian, so
\[
\left(\E|R|^m\right)^{1/m}
\le
C\sqrt{\frac mp}.
\]
Using $m!\ge(m/e)^m$ gives \eqref{eq_tensor_cut_ldlr_term}. Under \eqref{eq_tensor_cut_lambda}, for $m\le c(\log p)^2$ the base in \eqref{eq_tensor_cut_ldlr_term} is at most
\[
\frac{C_{d,c}m^{d/2-1}}{(\log p)^{2B}}
\le
C_{d,c}(\log p)^{d-2-2B}
=
o(1).
\]
Summing the resulting geometric bound proves \eqref{eq_tensor_cut_ldlr_bounded}.
\end{proof}

\subsection{Proof of Theorem~\ref{thm_tensor_cut_lower_bound}}\label{sec_proof_thm_tensor_cut_lower_bound}

\begin{proof}
Take $\widehat\mu=Y$, $s_p=2^{-d}\lambda_p$, and $r_p=C_d\sqrt p$. Proposition~\ref{prop_tensor_cut_ldlr} and Conjecture~\ref{conj_tensor_cut_low_degree} give the strong-detection hardness condition. Lemmas~\ref{lem_tensor_cut_signal} and~\ref{lem_tensor_cut_noise} give the separation and estimation-radius conditions. Because $f_p$ and the ratios $s_p/(2\rho_{f,p})=2^{-d-1}\lambda_p/\rho_{f,p}$ are uniformly polynomial-time computable, threshold closure holds. Theorem~\ref{thm_framework} therefore gives
\[
\limsup_{p\to\infty}
\frac{\gamma_{f,p}r_p}{s_p}
\ge
\frac14.
\]
Substituting \eqref{eq_tensor_cut_reverse_gap} proves \eqref{eq_tensor_cut_lower_bound}.
\end{proof}
\end{document}